\theoremstyle{plain}
\newtheorem{theorem}{Theorem}[section]
\newtheorem{lemma}[theorem]{Lemma}
\newtheorem{proposition}[theorem]{Proposition}
\newtheorem{corollary}[theorem]{Corollary}
\newtheorem*{definition}{Definition}
\newtheorem{example}[theorem]{Example}
\newtheorem*{remark}{Remark}
\newcommand{\nc}{\newcommand}
\nc\bB{\mathbb{B}}
\nc\bC{\mathbb{C}}
\nc\bD{\mathbb{D}}
\nc\bE{\mathbb{E}}
\nc\bF{\mathbb{F}}
\nc\bG{\mathbb{G}}
\nc\bH{\mathbb{H}}
\nc\bI{\mathbb{I}}
\nc{\bJ}{\mathbb{J}}
\nc\bK{\mathbb{K}}
\nc\bL{\mathbb{L}}
\nc\bM{\mathbb{M}}
\nc\bN{\mathbb{N}}
\nc\bO{\mathbb{O}}
\nc\bP{\mathbb{P}}
\nc\bQ{\mathbb{Q}}
\nc\bR{\mathbb{R}}
\nc\bS{\mathbb{S}}
\nc\bT{\mathbb{T}}
\nc\bU{\mathbb{U}}
\nc\bV{\mathbb{V}}
\nc\bW{\mathbb{W}}
\nc\bY{\mathbb{Y}}
\nc\bX{\mathbb{X}}
\nc\bZ{\mathbb{Z}}
\nc\cA{\mathcal{A}}
\nc\cB{\mathcal{B}}
\nc\cC{\mathcal{C}}
\nc\cD{\mathcal{D}}
\nc\cE{\mathcal{E}}
\nc\cF{\mathcal{F}}
\nc\cG{\mathcal{G}}
\nc\cH{\mathcal{H}}
\nc\cI{\mathcal{I}}
\nc{\cJ}{\mathcal{J}}
\nc\cK{\mathcal{K}}
\nc\cM{\mathcal{M}}
\nc\cN{\mathcal{N}}
\nc\cO{\mathcal{O}}
\nc\cP{\mathcal{P}}
\nc\cQ{\mathcal{Q}}
\nc\cS{\mathcal{S}}
\nc\cT{\mathcal{T}}
\nc\cU{\mathcal{U}}
\nc\cV{\mathcal{V}}
\nc\cW{\mathcal{W}}
\nc\cY{\mathcal{Y}}
\nc\cX{\mathcal{X}}
\nc\cZ{\mathcal{Z}}
\nc\fc{\mathfrak{c}}
\title{Shimura-Teichm\"uller Curves in Genus $5$}
\author{David Aulicino\thanks{D. A. was partially supported by the National Science Foundation under Award Nos. DMS - 1738381, DMS - 1600360, and PSC-CUNY Grant $\sharp$~61639-00 49.} ~and Chaya Norton}
\date{}
\begin{document}

\newcommand{\genlin}{\text{GL}_2(\mathbb{R})}
\newcommand{\splin}{\text{SL}_2(\mathbb{R})}
\newcommand{\spolin}{\text{SO}_2(\mathbb{R})}

\def\blue#1{\textcolor[rgb]{0,0,1}{#1}}

\maketitle

\begin{abstract}
We prove that there are no Shimura-Teichm\"uller curves generated by genus five translation surfaces, thereby completing the classification of Shimura-Teichm\"uller curves in general.  This was conjectured by M\"oller in his original work introducing Shimura-Teichm\"uller curves.  Moreover, the property of being a Shimura-Teichm\"uller curve is equivalent to having completely degenerate Kontsevich-Zorich spectrum.

The main new ingredient comes from the work of Hu and the second named author, which facilitates calculations of higher order terms in the period matrix with respect to plumbing coordinates.  A large computer search is implemented to exclude the remaining cases, which must be performed in a very specific way to be computationally feasible.
\end{abstract}

\tableofcontents

\section{Introduction}

Shimura-Teichm\"uller curves are Teichm\"uller curves in the moduli space of Riemann surfaces that map to Shimura curves in the moduli space of Abelian varieties.  Geometrically, they are geodesics in the moduli space of Riemann surfaces with respect to the Teichm\"uller metric that map to geodesics in the moduli space of Abelian varieties with respect to the Hodge metric.  From a dynamical systems point of view, they are Teichm\"uller curves with maximally many zero Lyapunov exponents in their Kontsevich-Zorich spectrum, i.e. they have completely degenerate Kontsevich-Zorich spectrum.

The notion of a Shimura-Teichm\"uller curve was introduced in \cite{MollerShimuraTeich}.  In \cite{ForniHand} and \cite{ForniMatheus} (followed by \cite{ForniMatheusZorichSqTiled}) two Teichm\"uller curves were discovered with the property that they have completely degenerate Kontsevich-Zorich spectrum.  The work of \cite{MollerShimuraTeich} classified all such Shimura-Teichm\"uller curves except in genus five.  The first named author \cite{AulicinoCompDegKZAIS} showed that there are at most finitely many such curves in genus five by showing that no higher dimensional $\splin$ orbit closures could have completely degenerate Lyapunov spectrum.  However, such an argument could not result in an explicit bound.  The work of \cite{ForniMatheusZorichSqTiled} showed that any Shimura-Teichm\"uller curves in genus five could not be square-tiled \emph{cyclic} covers of tori.  In this paper, we complete the classification conjectured in \cite{MollerShimuraTeich}

\begin{theorem}
\label{MainThm}
There do not exist Shimura-Teichm\"uller curves in genus five.
\end{theorem}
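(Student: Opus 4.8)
The plan is to assume for contradiction that a genus five Shimura-Teichm\"uller curve exists, take its closure in the Deligne--Mumford compactification $\overline{\mathcal{M}}_5$, and extract a contradiction from the behavior of the period matrix near a boundary point. I would begin from M\"oller's characterization in \cite{MollerShimuraTeich}: being Shimura-Teichm\"uller is equivalent to a splitting of the weight-one variation of Hodge structure $\mathbb{V} = \mathbb{L} \oplus \mathbb{M}$, where $\mathbb{L}$ is the rank-two maximal Higgs (uniformizing) sub-local-system and the complement $\mathbb{M}$ is unitary, hence flat. In genus five with completely degenerate Kontsevich--Zorich spectrum the rank of $\mathbb{M}$ is maximal, equal to $8$, and the unitarity of $\mathbb{M}$ is the rigid constraint I intend to exploit: the second fundamental form of the Hodge bundle must vanish in every direction transverse to the tautological plane spanned by $\mathbb{L}$, which means the periods in the $\mathbb{M}$-directions are locally constant along the curve.

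First I would reduce to a finite problem. By \cite{AulicinoCompDegKZAIS} there are at most finitely many such curves, and by \cite{ForniMatheusZorichSqTiled} none is a square-tiled cyclic cover; combined with the constraints on the admissible strata this leaves a finite, explicit list of candidate boundary configurations. At a cusp of the Teichm\"uller curve the generating translation surface decomposes into horizontal cylinders, and collapsing these cylinders produces a stable curve whose nodes correspond to the cylinder cores. I would enumerate the stable curves, equivalently the nodal degenerations, that are compatible with the stratum and with a completely degenerate spectrum.

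Next I would open each node by a plumbing parameter $t_i$ and expand the period matrix $\tau(t)$ of the nearby smooth surface. The leading, first-order terms are governed by Fay's formula, but for genus five this first-order regime is precisely where earlier methods stalled: the decisive obstruction lives in the higher-order coefficients. Here the work of Hu and the second named author supplies the missing ingredient, yielding closed expressions for the higher-order terms of $\tau(t)$ in the plumbing coordinates. Translating the flatness of $\mathbb{M}$ into these coordinates produces a system of equations asserting that certain higher-order coefficients of $\tau(t)$ vanish identically.

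The crux, and the main obstacle, is to show that these coefficients are in fact nonzero across the entire finite list; this is the large computer search announced in the abstract. For each admissible configuration one must compute the relevant second- (and if necessary higher-) order coefficient from the Hu--Norton formulas and verify that it does not vanish, contradicting the unitarity of $\mathbb{M}$. The difficulty is computational rather than conceptual: the expansions involve theta functions, Bergman kernels, and the combinatorics of the dual graph, so a naive implementation is infeasible. The calculation must therefore be organized to exploit the structure of the problem---the symmetries of the configurations, the sparse set of coefficients that the Shimura condition actually constrains, and a judicious choice of plumbing coordinates---so that only the decisive terms are ever computed. Once every configuration yields a nonzero obstruction, no cusp can support the required flat splitting, and hence no genus five Shimura-Teichm\"uller curve exists.
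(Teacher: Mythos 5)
Your overall architecture (degenerate at a cusp, expand the period matrix in plumbing parameters via \cite{HuNortonGenVarFormsAbDiffs}, finish with a computer search) matches the paper's in outline, but two of your load-bearing steps do not hold up. First, your reduction to ``a finite, explicit list of candidate boundary configurations'' is not justified. The finiteness statement in \cite{AulicinoCompDegKZAIS} is ineffective --- the paper says explicitly that it cannot produce a bound --- and M\"oller's stratum/degree table alone does not bound the number of square-tiled candidates, because within Case (i) of Lemma \ref{MThreeCasesLem} the branch locus on the torus is a priori unconstrained, so there are infinitely many square-tiled surfaces for each fixed $d_{opt}$. The step that actually makes the problem finite is the paper's main new theorem (Theorem \ref{AtMostTwoParts}): every boundary nodal surface has at most two parts, hence every periodic direction has exactly two (homologous) cylinders, which forces the branch points onto the $2$-torsion points (Proposition \ref{STCurve2Torsion}) and caps the number of squares at $4d_{opt}\le 144$. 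You neither state nor prove this; and the way the Hu--Norton formulas enter is precisely here --- not in the computer search --- namely, for a ring with $n\ge 3$ parts the \emph{linear} coefficient of a suitable entry of the period matrix is a single nonvanishing product $-s\,\theta_{2_j}(q_2^-)\theta(q_1^+)$, contradicting Forni's rank-one criterion (Lemma \ref{DerPerMat}); for $n=2$ it is a sum of two terms that can cancel, which is why two parts survive. (One also needs the exclusion of Cases (ii) and (iii), done in the paper via the Eskin--Kontsevich--Zorich sum formula \cite{EskinKontsevichZorich2}, Theorem \ref{CaseiiAndiiiExclusion}; your proposal is silent on these.)

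Second, your plan for the finite check --- compute higher-order coefficients of $\tau(t)$ from the Hu--Norton formulas for each configuration and verify they are nonzero --- is the one approach the authors report \emph{fails}: they computed the $s^2$ coefficient for the two-part (banana) degeneration and found the resulting conditions inconclusive, i.e.\ no weaker than what is already known from \cite{MollerShimuraTeich}. The actual computer search (Theorem \ref{NonHomCylsThm}) is purely combinatorial flat geometry: it enumerates candidate square-tiled surfaces via cylinder diagrams, saddle-connection length partitions, and twists, filters them with parity and ``window'' constraints, and checks that every rational direction decomposes into exactly two $2d_{opt}$-cycles, finishing with the Lyapunov-exponent sum formula. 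No theta functions or Bergman kernels are evaluated. So as written, your proposal is missing the theorem that makes the list finite, and the verification you propose for the finite list would not terminate in a contradiction.
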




Combined with the results of \cite{MollerShimuraTeich} and \cite{AulicinoCompDegKZAIS}, we get the following two equivalent theorems.

\begin{theorem}
\label{FullSTClassThm}
The only Shimura-Teichm\"uller curves are the ones generated by the Eierlegende Wollmilchsau and Ornithorynque.
\end{theorem}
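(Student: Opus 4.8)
The plan is to treat Theorem~\ref{FullSTClassThm} as a synthesis: all of the genuine work lives in Theorem~\ref{MainThm} and in Möller's earlier classification, and what remains is to assemble these inputs across every genus. First I would recall from \cite{MollerShimuraTeich} that a Shimura-Teichm\"uller curve can only be generated by a translation surface whose genus lies in a bounded range. On the one hand, Möller rules out all genera $g \geq 6$, using the strong constraints that a completely degenerate Kontsevich-Zorich spectrum imposes on the variation of Hodge structure and on the associated modular embedding. On the other hand, the low-genus cases $g \leq 2$ are excluded because there is simply not enough room in the Hodge bundle for the second Lyapunov exponent to vanish (in genus two the second exponent is known to be strictly positive). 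This confines the problem to $g \in \{3,4,5\}$.

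Next I would invoke Möller's explicit identifications in the two already-solved cases. In genus three the unique Shimura-Teichm\"uller curve is the one generated by the Eierlegende Wollmilchsau, and in genus four it is the one generated by the Ornithorynque; these are precisely the completely degenerate examples originally exhibited in \cite{ForniHand}, \cite{ForniMatheus}, and \cite{ForniMatheusZorichSqTiled}. The only remaining open genus is $g = 5$, and here I would apply Theorem~\ref{MainThm} directly: there are no Shimura-Teichm\"uller curves generated by genus five translation surfaces. Combining the three cases $g \in \{3,4,5\}$ together with the exclusions for $g \leq 2$ and $g \geq 6$, the complete list of Shimura-Teichm\"uller curves is exactly the Eierlegende Wollmilchsau and the Ornithorynque.

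I expect essentially no obstacle in this final step, since the difficulty has all been pushed into Theorem~\ref{MainThm} and into \cite{MollerShimuraTeich}. The only points requiring care are bookkeeping ones: confirming that the genus bound and the genus-three and genus-four uniqueness statements in \cite{MollerShimuraTeich} are phrased for the same notion of Shimura-Teichm\"uller curve used here, and checking that the two named examples are not counted redundantly. If one additionally wants the companion statement phrased in terms of completely degenerate Kontsevich-Zorich spectrum rather than Shimura-Teichm\"uller curves, then I would also bring in \cite{AulicinoCompDegKZAIS} to exclude higher-dimensional $\splin$ orbit closures with completely degenerate spectrum, so that the hypothesis of a completely degenerate spectrum forces the orbit closure to be a Teichm\"uller curve in the first place; this is what makes the two theorems equivalent.
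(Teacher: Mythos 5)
Your proposal is correct and follows essentially the same route as the paper, which derives Theorem~\ref{FullSTClassThm} simply by combining M\"oller's classification in \cite{MollerShimuraTeich} (which settles every genus except five, yielding exactly the Eierlegende Wollmilchsau in genus three and the Ornithorynque in genus four) with Theorem~\ref{MainThm} for the remaining genus-five case. The additional bookkeeping you describe, including the role of \cite{AulicinoCompDegKZAIS} in passing to the equivalent statement about completely degenerate KZ-spectrum, matches how the paper treats Theorem~\ref{FullCDKZClassThm} in Section~\ref{CDKZSpecSect}.
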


Theorem \ref{FullCDKZClassThm} solves \cite[Problem 1]{EskinKontsevichZorich2} as conjectured by \cite{ForniMatheusZorichSqTiled}.

\begin{theorem}
\label{FullCDKZClassThm}
The only affine submanifolds of the moduli space of translation surfaces with completely degenerate Kontsevich-Zorich spectrum are the Teichm\"uller curves generated by the Eierlegende Wollmilchsau and Ornithorynque.
\end{theorem}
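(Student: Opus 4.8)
The plan is to deduce the statement essentially formally from three facts now at our disposal: the dimension reduction of \cite{AulicinoCompDegKZAIS}, Möller's characterization of Teichm\"uller curves with completely degenerate Kontsevich-Zorich spectrum, and the complete classification of Shimura-Teichm\"uller curves recorded in Theorem \ref{FullSTClassThm}.

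First I would reduce from an arbitrary affine submanifold to a Teichm\"uller curve. Let $\cM$ be an affine submanifold whose Kontsevich-Zorich spectrum is completely degenerate. The main result of \cite{AulicinoCompDegKZAIS} shows that no affine submanifold of dimension strictly larger than that of a closed $\splin$ orbit can carry a completely degenerate Lyapunov spectrum; hence $\cM$ must itself be a closed $\splin$ orbit, i.e.\ it generates a Teichm\"uller curve. This step is what collapses the a priori large family of affine invariant submanifolds onto the one-parameter objects that the remaining classification can address, and it is general across all genera rather than specific to genus five.

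Next I would invoke the equivalence, due to M\"oller \cite{MollerShimuraTeich}, that a Teichm\"uller curve has completely degenerate Kontsevich-Zorich spectrum precisely when it is a Shimura-Teichm\"uller curve, so that $\cM$ must be a Shimura-Teichm\"uller curve. Theorem \ref{FullSTClassThm} then identifies the only such curves as those generated by the Eierlegende Wollmilchsau and the Ornithorynque, which is exactly the assertion. The genuine difficulty lies not in this assembly, which is immediate, but entirely in its inputs: the dimension bound of \cite{AulicinoCompDegKZAIS} and, above all, the genus-five non-existence statement of Theorem \ref{MainThm} that completes Theorem \ref{FullSTClassThm}. Once those are in place, the deduction of Theorem \ref{FullCDKZClassThm} requires no further work.
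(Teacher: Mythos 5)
Your proposal is correct, and its overall skeleton (reduce to a Teichm\"uller curve, apply M\"oller's equivalence between complete degeneracy and being an ST-curve, then quote Theorem \ref{FullSTClassThm}) matches the paper's. The one genuine difference is in how the reduction to a Teichm\"uller curve is carried out. You invoke the full main theorem of \cite{AulicinoCompDegKZAIS} --- that no affine submanifold of dimension larger than a closed $\splin$ orbit has completely degenerate spectrum --- as a black box, which is legitimate since that result is unconditional. The paper instead deliberately avoids the complicated degeneration arguments behind that theorem: it uses only the intermediate lemmas of \cite{AulicinoCompDegKZAIS} (the rank $g-1$ Forni subspace, cylinder rank one, arithmeticity) to conclude that $\cM$ is either an arithmetic Teichm\"uller curve or contains \emph{infinitely many} Teichm\"uller curves with completely degenerate spectrum, and then kills the second alternative directly with the new classification, since there are only two ST-curves in total. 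So the paper's route buys a genuinely shorter, more self-contained argument in which the hard higher-dimensional exclusion becomes a corollary of the ST-curve classification, whereas yours leans on that exclusion as an external input; both are valid proofs of the theorem.
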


The key new ingredient that allows us to reduce the number of possibilities to a finite checkable list is the work of \cite{HuNortonGenVarFormsAbDiffs}.  In \cite{MasurExt}, the boundary of Teichm\"uller space was parameterized and expansions for differentials were given in terms of pinching parameters.  When restricting to the cusp of a Teichm\"uller curve, Lemma~\ref{TCIntegralRelPinchParams} derives a formula for the pinching parameters near every node in terms of a single complex parameter.  Exact formulas for higher order terms in pinching parameter expansions for differentials in the case in which a single closed curve is pinched were derived in \cite{FayThetaFcns} and corrected in \cite{Yamada}.  However, in most cases that appear in the study of dynamics in moduli space, including every case relevant to this paper, more than one curve is pinched simultaneously.  Therefore, the exact formulas for the higher order terms in \cite{FayThetaFcns, Yamada} do not apply.  In fact, the Ahlfors-Rauch variational formula, which has been a staple of almost all work on this problem, is avoided due to the power of the results of \cite{HuNortonGenVarFormsAbDiffs}.  The work of \cite{HuNortonGenVarFormsAbDiffs} derives exact formulas for the coefficient of any term in the expansion of a differential in terms of pinching parameters for any collection of curves.  Since the problem of finding Shimura-Teichm\"uller curves is equivalent to finding period matrices whose derivative has rank one, the higher terms in the Taylor expansion of the period matrix allow us to prove that this cannot be the case for all but finitely many possibilities in genus five.  The main result in provided in Theorem \ref{AtMostTwoParts}.

In spite of the reduction from infinite to finite, the exclusion of each of the finite cases is a significant computational problem even with modern technology.  This is because the square-tiled surfaces that we are forced to consider could have up to $144$ squares.  There are on the order of $144! \approx 5.6 \times 10^{249}$ square-tiled surfaces in general because square-tiled surfaces are expressed as pairs of permutations in the group $S_n$, where $n$ is the number of squares.  One permutation describes the horizontal gluing of squares, and the other describes the vertical gluing.  To avoid working with such square-tiled surfaces in general, we introduce a coordinate system that records the identification among saddle connections (commonly called cylinder diagrams), lengths of the saddle connections, and twists of the cylinders.  This reduces the number of cases to approximately $24 \times 10^{15}$ surfaces in the largest case.  The significance of this coordinate system is that partial coordinates can be proven to be inadmissible for the problem at hand.  This allows entire families of surfaces to be checked with a single arithmetic computation, thereby rendering the problem tractable.  The pseudo-algorithm is explained in Section \ref{TwoTorPtSect}.

Whenever possible, we give theoretical arguments to exclude cases that arise.  For example, the work of \cite{EskinKontsevichZorich2}, which did not exist during the writing of \cite{MollerShimuraTeich}, facilitates the exclusion of many finite cases.  See Section \ref{CaseiiandiiiSect}.

The potential existence of Shimura-Teichm\"uller curves in genus five was a significant obstacle to overcome in the work of \cite{AulicinoCompDegKZ} and \cite{AulicinoCompDegKZAIS}.  In Section \ref{CDKZSpecSect}, we explain how to modify \cite{AulicinoCompDegKZAIS} in order to get a short proof of Theorem \ref{FullCDKZClassThm}.

\subsubsection*{Acknowledgments} The authors would like to thank Matt Bainbridge for suggesting that the results of \cite{HuNortonGenVarFormsAbDiffs} could be brought to bear on this problem and for helpful discussions at the earliest stage of this work.  They would also like to thank Anton Zorich, Martin M\"oller, Samuel Grushevsky, and Xuntao Hu for helpful discussions.  The first author would like to thank Vincent Delecroix, Giovanni Forni, and Alex Eskin for numerous conversations on this topic during the course of his graduate studies and postdoc.  Finally, the authors are grateful to the Fields Institute for hosting the ``Workshop on Dynamics and Moduli Spaces of Translation Surfaces'' where this collaboration began.

\section{Definitions and Previous Results}
\label{PreliminarySection}

\subsection{Geometry of Flat Surfaces}

\noindent \textbf{Translation Surfaces}: A \emph{translation surface} is a pair $(X, \omega)$ of a Riemann surface $X$ carrying a holomorphic $1$-form $\omega$, also known as an Abelian differential.  Given a regular closed geodesic on a translation surface, there exists a maximal homotopy class of parallel trajectories that determine a \emph{cylinder}.  Define the \emph{height} of a cylinder to be the distance between boundaries and the \emph{period}, \emph{width}, and \emph{circumference} will all refer to the same quantity.  The \emph{modulus} of a cylinder is given by $h/w$, where $h$ is its height and $w$ its width.  If every parallel trajectory on the translation surface is closed, this produces a cylinder decomposition of the surface.  The maximality of the homotopy class implies that the boundaries of a cylinder are always unions of \emph{saddle connections}.  A translation surface is called \emph{completely periodic} if it has the property that a single regular closed geodesic implies that all parallel trajectories are closed, i.e. it admits a cylinder decomposition.

\

\noindent \textbf{Strata of Translation Surfaces}:  If $(X, \omega)$ is a translation surface with genus $g \geq 2$, then $\omega$ has zeros of total order $2g-2$.  Let $\kappa$ be a partition of $2g-2$.  Consider the moduli space of all translation surfaces $\cH(\kappa)$ with zeros of order specified by $\kappa$.  As usual the moduli space is taken with equivalence to mean up to action by the mapping class group.  Often, it will be convenient to employ the common shorthand exponential notation for strata, e.g. $\cH(2, 1^4) = \cH(2,1,1,1,1)$.

\

\noindent \textbf{$\genlin$ Action}: The group $\genlin$ acts on strata $\cH(\kappa)$ by splitting the differential $\omega$ into its real and imaginary parts, and multiplying by a matrix in $\genlin$:
$$\left( \begin{array}{cc} a & b \\ c & d \\ \end{array} \right) \left( \begin{array}{c} \text{Re}(\omega) \\ \text{Im}(\omega) \\ \end{array} \right).$$
We will often restrict to the subgroup $\splin$ in order to preserve the area of the translation surface.  Let $g_t$ denote the Teichm\"uller geodesic flow on the moduli space of translation surfaces, which is given by
$$g_t = \left( \begin{array}{cc} e^t & 0 \\ 0 & e^{-t} \\ \end{array} \right).$$

\

\noindent \textbf{Veech Surfaces}: One can consider the subgroup of $\text{SL}(X, \omega) \subset \splin$ of derivatives of affine diffeomorphisms that fix the translation surface $(X, \omega)$ in $\cH(\kappa)$.  In general, this group is trivial.  However, if it forms a lattice subgroup of $\splin$, then $(X, \omega)$ is called a \emph{Veech surface}, or \emph{lattice surface}, and its orbit under $\splin$ is a \emph{Teichm\"uller curve}.  (The terminology Teichm\"uller curve for this $3$-dimensional object comes from the fact that its projection to the moduli space of Riemann surfaces is an algebraic curve.)

The celebrated Veech dichotomy states that every Veech surface is completely periodic and that the straight-line flow in the non-periodic directions is uniquely ergodic \cite{VeechTeichCurvEisen}.

\

\noindent \textbf{Square-Tiled Surfaces}: The particular class of Veech surfaces that will be considered in this paper are \emph{square-tiled surfaces}, which are degree $d$ branched covers of the torus ramified over exactly one point.  Since the torus covers itself with any degree, given a branched covering of a torus ramified over a single point, one can consider the largest intermediate torus that \emph{need not} be branched over a single point.  Following \cite{MollerShimuraTeich}, call this covering the \emph{optimal covering}
$$\pi_{opt}: (X, \omega) \rightarrow (E, \eta),$$
and denote its degree by $d_{opt}$.

\

\noindent \textbf{Degenerate Surfaces}: It was proven by \cite{MasurClosedTraj} that no $\splin$ orbit is compact.  A compactification of the moduli space of Riemann surfaces can be obtained via the Deligne-Mumford compactification.  Since this paper only concerns Teichm\"uller curves, we avoid general terminology.  The boundary of a Teichm\"uller curve consists of a finite union of points corresponding to the cusps of the algebraic curve.  The surfaces corresponding to the cusps are given by considering all cylinder decompositions of the Veech surface and taking their moduli to infinity by acting by the Teichm\"uller geodesic flow.  


A cusp of a Teichm\"uller curve in the Deligne-Mumford compactification represents a nodal Riemann surface $X'$.  Removing the nodes yields a possibly disconnected union of punctured Riemann surfaces.  The connected components of the punctured Riemann surface are called \emph{parts}.  For Teichm\"uller curves, the natural Abelian differential $\omega'$ on $X'$ is meromorphic with simple poles at the punctures and holomorphic, but not identically zero everywhere else.  In particular, if $p^+$ and $p^-$ are a pair of punctures resulting from the removal of a node, then the residues of $\omega'$ satisfy
$$\text{Res}_{p^+}(\omega') = -\text{Res}_{p^-}(\omega').$$

There is a canonical \emph{dual graph} for each nodal Riemann surface given by associating a vertex to every part and an edge to every node.

\

\noindent \textbf{Plumbing Coordinates}: This is completely general and does not rely on being a Teichm\"uller curve.  A neighborhood of a node is diffeomorphic to $\{(z, w) \in \bC^2 | zw = 0\}$.  Removing the node results in two neighborhoods that are diffeomorphic to punctured discs.  If one punctured disc lies on component $X_i$ and the other on $X_j$, then we call the two punctures $q_i^+$ and the other $q_j^-$, where $i$ and $j$ are not necessarily distinct.  This notation will be clear in the specific setting of this paper explained below.  If we consider neighborhoods of $q_i^+$ and $q_j^-$ that are diffeomorphic to unit discs with local coordinates $z_i^+$ and $z_j^-$, respectively, for each small parameter $s \in \bC$ we can consider the curves $\gamma_{z_i^+}$ given by $\{|z_i^+| = \sqrt{|s|}\}$ and $\gamma_{z_j^-}$ given by $\{|z_j^-| = \sqrt{|s|}\}$.  We glue the annuli $\{z_i^+ | \sqrt{|s|} \leq |z_i^+| < 1\}$ and $\{z_j^- | \sqrt{|s|} \leq |z_j^-| < 1\}$ via the function $I(z_i^+)=\frac{s}{z_j^-}$ that identifies $\gamma_{z_i^+}$ and $\gamma_{z_j^-}$ to a single curve $\gamma$, which we call the \emph{seam}.  The parameter $s$ is referred to interchangeably as a \emph{plumbing} or \emph{pinching parameter}.  The smooth surface resulting from removing a node and identifying along a seam with plumbing parameter $s$ is called a \emph{plumbed surface} and denoted $X^s$.

\begin{remark}
In the lemma below, each node arises from the modulus of a flat cylinder tending to infinity.
\end{remark}

\begin{lemma}
\label{TCIntegralRelPinchParams}
Given a nodal surface $(X', \omega')$ in the boundary of a cusp of a Teichm\"uller curve, there exists a natural choice of local coordinates near each node $N_j$, $1 \leq j \leq n$, such that there exists a complex parameter $s$ in a sufficiently small complex disc and a unique set of natural numbers $m_j$ depending on $s$ with the property that for each node $N_j$, the pinching parameter at that node is given by $s_j = s^{m_j}$.
\end{lemma}

\begin{remark}
The natural choice of coordinates in the lemma arises from the fact that each node of a Riemann surface at the cusp of a Teichm\"uller curve corresponds to a cylinder with large modulus.  The fact that there is a unique direction on the surface that sees all of the cylinders with a large modulus is what specifies the flat structure that is used to choose the coordinates at the nodes.
\end{remark}

\begin{proof}
Following \cite{MasurExt}, a neighborhood of the boundary of moduli space is parametrized by $3g-3$ complex coordinates $(\vec \tau, \vec s) \in \bC^{3g-3}$ such that $(0, \ldots, 0)$ corresponds to $X'$ here.  The $\vec \tau$ coordinate parametrizes $X'$ and the $\vec s \in \bC^n$ coordinate is a vector of pinching parameters for each of the nodes.  For each node $N_j$, the surface is plumbed with complex parameter $s_j$ as described above to get a point in moduli space.  

The lemma follows by choosing coordinates $z_j^{\pm}$ at each node with which to perform plumbing by using the restricted geometry of surfaces on a Teichm\"uller curve.  Precisely speaking, we will choose flat coordinates below for the cylinders that give rise to each node and use the exponential map to transform them to the coordinates $z_j^{\pm}$.  Using $\splin$, we vary the flat coordinates to parametrize an entire neighborhood of the cusp of the Teichm\"uller curve.  The result follows from the fact that the ratios of the moduli of parallel cylinders is constant in a surface on a Teichm\"uller curve.

Let $(X_{(\vec \tau, \vec s)}, \omega_{(\vec \tau, \vec s)})$ be a vertically periodic surface in a neighborhood of $(X', \omega')$ such that the vertical direction realizes the cylinders with the largest modulus.  The ratio of the moduli of any pair of parallel cylinders on a Veech surface is rational, see \cite{VorobetsPlaneStrctsBilliards} or \cite[Thm. 1.3 (iii)]{SmillieWeissCharLattice}.  Furthermore, the fact that we are on a Teichm\"uller curve implies that the ratio of moduli between any fixed pair of cylinders in the neighborhood of $(X', \omega')$ is constant.  We show for a choice of local coordinates used for plumbing at the nodes, each component of $\vec s$ is an integral power of a single complex parameter.

For the cylinder $C_j$ whose core curve converges to $N_j$ as $s_j$ goes to zero, cut it in half and identify each half with a rectangle $R_j$ with corners at $0$, $\sqrt{-1}c_j$ and $h_j/2$.  Let $\zeta_j^-$ and $\zeta_j^+$ be the flat coordinates with respect to the differential $\omega_{(\vec \tau, \vec s)}$ on each half of $C_j$.  If we calculate how the corners of $R_j$ transform under multiplication by the $2$-dimensional real parameter family of matrices
$$\left(\begin{array}{cc} t & 0 \\ u & t^{-1} \end{array} \right),$$
for $t > 0$, we see that they become the corners of a parallelogram $P_j(t,u)$ at $0$, $t^{-1}\sqrt{-1}c_j$ and $t h_j/2+u(h_j/2)\sqrt{-1}$.  By definition, the parameters $(t, u)$ parametrize every surface in a neighborhood of a cusp of a Teichm\"uller curve.

Let $\mu_j = h_j/c_j$ denote the modulus of cylinder $C_j$.  The exponential map $\exp\left(\frac{-2\pi}{c_j} \zeta_j^{\pm}\right)$ maps $P_j$ to an annulus with unit outer radius and inner radius $\exp\left(-\pi\mu_j t\right)$.  The surface $(X,\omega)$ is the result of identifying pairs of annuli by gluing along the boundary with twists $e^{\sqrt{-1}\theta_j}$, for $\theta_j \in \bR$. Hence, we choose local coordinates on one annuli of the form $e^{\sqrt{-1}\theta_j}\exp\left(\frac{-2\pi}{c_j} \zeta_j^{-}\right)$ and of the form $\exp\left(\frac{-2\pi}{c_j} \zeta_j^{+}\right)$ on the other.  The surface $(X,\omega)$ is equivalent to that of plumbing with respect to the pinching parameters,
$$s_j = e^{-2\pi \mu_jt-2\pi \sqrt{-1} u\mu_j}.$$
Defining the coordinates $z_j^- = e^{\sqrt{-1}\theta_j}\exp\left(\frac{-2\pi}{c_j} \zeta_j^{-}\right)$ and $z_j^+ = \exp\left(\frac{-2\pi}{c_j} \zeta_j^{+}\right)$, produces the surface $(X,\omega)$ via the gluing map $I(z_j^+)=\frac{s_j}{z_j^-}$.  Thus, by varying $s_j$, namely sending $t$ to $\infty$, the neighborhood of the cusp on the Teichm\"uller curve is parameterized.

We now show that $s_j$ can be described by integer powers of a complex parameter $s$, which tends to zero as we degenerate to the cusp.  Since the ratio of moduli of vertical cylinders is a rational constant in the neighborhood of a cusp of a Teichm\"uller curve, there exists $m \in \bN$ depending on $s$ such that for each vertical cylinder $C_j$, there exists $m_j \in \bN$ such that
$$\mu_j = \mu_1\frac{m_j}{m}.$$
Let
$$s(t,u) = e^{-2\pi \frac{\mu_1}{m} t-2\pi \sqrt{-1} u\frac{\mu_1}{m}} = e^{\left(-2\pi \frac{\mu_1}{m}\right)(t+\sqrt{-1} u)}.$$
By inspection, $s_j = s^{m_j}$ for all $j$.  As $s$ varies, the ratios of moduli of cylinders remain constant by construction, so the neighborhood of $(X', \omega')$ restricted to the Teichm\"uller curve is indeed parameterized by $s$.  
\end{proof}

\begin{remark}
In summary, we chose coordinates near one node $N_1$ in the proof above.  By the rationality of ratios of moduli and the fact that these ratios are constant for a Teichm\"uller curve, we were able to write the coordinates near every other node in terms of the coordinates near $N_1$.  This was done via the formulas for $z_j^{\pm}$ and consequentially, $s_j$ in the proof above.

The coordinates used for plumbing are normal coordinates for a stable differential with simple poles at each node, i.e. coordinates such that the differential is locally of the form $r\frac{dz}{z}$ without any further terms in the expansion. In general, one has a $\mathbb C^*$ choice of such local coordinates near each node. 
\end{remark}

\noindent \textbf{Kontsevich-Zorich Cocycle}: Consider the bundle of first absolute cohomology $H^1$ over a Teichm\"uller curve.  The Teichm\"uller geodesic flow on the cotangent bundle descends to a trajectory on the Teichm\"uller curve, which in turn induces a cocycle $G_t^{KZ}$ on $H^1$.  One can consider the Lyapunov exponents of this cocycle, which are well-defined almost everywhere by the Oseledet's Multiplicative Ergodic Theorem.  In fact, this is a symplectic cocycle, so after normalizing the largest exponent to one, we get the \emph{Kontsevich-Zorich spectrum} of the Teichm\"uller curve, or \emph{KZ-spectrum} for short.
$$1 > \lambda_2 \geq \cdots \geq \lambda_g \geq -\lambda_g \geq \cdots \geq -\lambda_2 > -1.$$
The fact that $1 > \lambda_2$ was proven in \cite{ForniDev, VeechGt}.  This is well-defined for all $\splin$ orbits, but the greater generality is not needed here.  It was proven for strata that all of the inequalities in the KZ-spectrum are strict, i.e. the spectrum is simple \cite{ForniDev, AvilaVianaSimp}.  However, \cite{ForniHand} found the first example of a surface in genus three that generates a Teichm\"uller curve with maximally many zero exponents, which is now called the \emph{Eierlegende Wollmilchsau} \cite{HerrlichSchmithusenEier}.  We say that the KZ-spectrum is \emph{completely degenerate} if 
$$\lambda_2 = \cdots = \lambda_g = 0.$$
Later \cite{ForniMatheus, ForniMatheusZorichSqTiled} found a second example in genus four of a square-tiled surface with completely degenerate KZ-spectrum, which is now called the \emph{Ornithorynque}.  The monodromy groups of the KZ-cocycle for these examples were studied in detail in \cite{MatheusYoccoz}, where a related infinite class of cyclic surfaces were given.  (None of the higher genus examples in that class have completely degenerate KZ-spectrum.)

\

\noindent \textbf{Shimura Curves}: The moduli space of abelian varieties inherits a metric from the homogeneous space $Sp(2g,\mathbb R)/U(g)$ called the Hodge metric. A \emph{Shimura curve} is a curve which is totally geodesic in the Hodge metric on the moduli space of Abelian varieties with a complex multiplication point.  We refer the reader to \cite[$\S$ 2]{MollerShimuraTeich} for a more complete description of Shimura curves and related background.  A Teichm\"uller curve that maps to a Shimura curve under the Torelli map is called a \emph{Shimura-Teichm\"uller curve} or \emph{ST-curve} for short.  One implication in Proposition \ref{EqToCDKZSpec} below is given by \cite[Prop. 6.4]{MollerShimuraTeich}.  The other direction follows from \cite[Cor. 7.1]{ForniHand}(see Lemma \ref{DerPerMat} below) and \cite[$\S$ 2]{MollerShimuraTeich}.

\begin{proposition}
\label{EqToCDKZSpec}
A Teichm\"uller curve is an ST-curve if and only if the KZ-spectrum is completely degenerate.
\end{proposition}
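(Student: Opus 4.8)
The plan is to prove the two implications separately, using the Hodge-theoretic description of the Kontsevich-Zorich cocycle as the common bridge. The central object in both directions is the derivative of the period matrix along the Teichm\"uller geodesic, regarded as a symmetric bilinear form $B$ on $H^{1,0}$; Lemma \ref{DerPerMat}, which reproduces \cite[Cor. 7.1]{ForniHand}, computes this derivative and relates its rank to the number of nonzero Lyapunov exponents. The guiding principle, due to Forni, is that the top exponent $\lambda_1 = 1$ always arises from the tautological direction, while $\lambda_2, \ldots, \lambda_g$ are governed by the curvature of the Hodge bundle encoded in $B$. Thus the KZ-spectrum is completely degenerate exactly when $B$ has rank one, and the strategy is to match this rank-one condition against the Shimura property on each side.

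For the forward implication, that an ST-curve has completely degenerate spectrum, I would invoke \cite[Prop. 6.4]{MollerShimuraTeich} directly. Conceptually, a Shimura curve is totally geodesic for the Hodge metric, and this forces the variation of Hodge structure over the Teichm\"uller curve to split as a maximal Higgs line together with a flat, unitary complement. The flat complement contributes only zero exponents by applying the Oseledets theorem to a bounded cocycle, while the one-dimensional Higgs part accounts solely for $\lambda_1$. Hence $\lambda_2 = \cdots = \lambda_g = 0$.

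For the reverse implication, that complete degeneracy forces an ST-curve, which is the more substantial direction, I would argue as follows. By Lemma \ref{DerPerMat}, complete degeneracy forces $B$ to have rank one at almost every point; since $B$ also always has rank at least one from the tautological direction and varies real-analytically along the flow, its rank is identically one. A rank-one derivative of the period map means that the Torelli image of the Teichm\"uller curve is totally geodesic for the Hodge metric in $\cA_g$. It then remains to produce the complex multiplication point demanded by the definition of a Shimura curve, and here I would appeal to \cite[$\S$ 2]{MollerShimuraTeich}, using that Teichm\"uller curves are defined over number fields so that the totally geodesic image is forced to be a genuine Shimura curve rather than merely an abstract geodesic disk.

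The main obstacle I anticipate lies in this reverse direction, in the passage from the purely dynamical hypothesis of vanishing Lyapunov exponents to the rigid algebro-geometric conclusion of a Shimura subvariety carrying a CM point. The dynamical input controls only the generic rank of $B$; upgrading this to the everywhere rank-one statement, ruling out exceptional loci where the Hodge filtration behaves unexpectedly, and finally locating the CM point, all rest on the arithmeticity of Teichm\"uller curves and the structure theory assembled in \cite{MollerShimuraTeich}. Verifying that these ingredients combine cleanly, so that rank-one of $B$ really does characterize the Shimura property without loss, is the delicate step.
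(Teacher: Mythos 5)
Your proposal matches the paper's treatment: the paper does not give a standalone proof but attributes one implication to \cite[Prop. 6.4]{MollerShimuraTeich} and the other to \cite[Cor. 7.1]{ForniHand} (Lemma \ref{DerPerMat}) together with \cite[$\S$ 2]{MollerShimuraTeich}, which is exactly the division of labor you propose. The additional conceptual glosses you supply (the maximal Higgs line plus unitary complement, and the rank-one derivative of the period matrix characterizing total geodesy) are consistent elaborations of those same sources rather than a different route.
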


Since the property of having completely degenerate Kontsevich-Zorich spectrum will be used to explain the relevance of the variational formulas from \cite{HuNortonGenVarFormsAbDiffs}, we follow the biased convention that any reference to an ST-curve in this paper will refer to a Teichm\"uller curve in the moduli space of translation surfaces and \emph{not} a Shimura curve in the moduli space of Abelian varieties.

\

\noindent \textbf{Period Matrix}: Let $X$ be a Riemann surface of genus $g \geq 2$.  Let $\{a_1, \ldots, a_g, b_1, \ldots, b_g\}$ be a symplectic basis of $H_1(X, \bZ)$.  The basis $\{\theta_1, \ldots, \theta_g\}$ of Abelian differentials on $X$ is called a \emph{normalized basis} if it is normalized so that
$$\int_{a_i} \theta_j = \delta_{ij},$$
where $\delta_{ij}$ is the Kronecker delta.  The following lemma about a basis of holomorphic differentials will be needed in the proof of Theorem \ref{AtMostTwoParts} below.

\begin{lemma}
\label{NonZeroAtPt}
Let $X$ be a Riemann surface of genus $g \geq 1$ carrying a basis of holomorphic Abelian differentials $\{\theta_1, \ldots, \theta_g\}$.  If $p \in X$, then there exists $i$ such that $\theta_i$ does not have a zero at $p$.
\end{lemma}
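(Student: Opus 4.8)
The plan is to argue by contradiction, reducing the statement to the classical fact that the holomorphic differentials on a compact Riemann surface have no common zero, i.e. that the canonical linear system is base-point free. Suppose that every $\theta_i$ vanishes at $p$. Since $\{\theta_1, \ldots, \theta_g\}$ is a basis of the space $H^0(X, \Omega^1)$ of holomorphic differentials, every holomorphic differential $\omega = \sum_i c_i \theta_i$ would then satisfy $\omega(p) = \sum_i c_i \theta_i(p) = 0$. Thus $p$ would be a common zero of all holomorphic differentials, and the space of holomorphic differentials vanishing at $p$ would have full dimension $g$. The goal is to show this is impossible.

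First I would dispose of the case $g = 1$ directly: a genus-one surface carries, up to scale, a single nowhere-vanishing holomorphic differential, so no common zero exists and the conclusion is immediate. For $g \geq 2$, I would instead compute the dimension of the space of holomorphic differentials vanishing at $p$ using the Riemann--Roch theorem. Writing $K$ for the canonical divisor, this space is $H^0(X, \cO(K-p))$, of dimension $\ell(K-p)$. Applying Riemann--Roch to the divisor $K - p$ and using $\deg K = 2g - 2$ gives $\ell(K-p) - \ell(p) = \deg(K-p) - g + 1 = (2g - 3) - g + 1 = g - 2$, where I have used $K - (K-p) = p$.

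The key step is to evaluate $\ell(p) = \dim H^0(X, \cO(p))$, the dimension of the space of meromorphic functions with at most a simple pole at $p$. Here is exactly where the hypothesis $g \geq 1$ enters: if this space contained a non-constant function, that function would define a degree-one map $X \to \mathbb{P}^1$, forcing $g = 0$. Hence $\ell(p) = 1$ whenever $g \geq 1$. Substituting back yields $\ell(K-p) = g - 1$, so the space of holomorphic differentials vanishing at $p$ has dimension $g - 1 < g$, contradicting the full-dimensionality derived above. Therefore some $\theta_i$ cannot vanish at $p$.

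I expect the only real bookkeeping to lie in the Riemann--Roch computation and in the justification that $\ell(p) = 1$, and the latter is the genuine crux of the argument since it is the single place the genus hypothesis is used. Beyond this I anticipate no deeper difficulty, as the result is precisely the standard base-point-freeness of the canonical bundle; the main care is simply in translating between the language of differentials vanishing at $p$ and sections of $\cO(K-p)$, and in handling the $g = 1$ edge case separately so that the Riemann--Roch bookkeeping is only invoked when $\deg(K-p) \geq 0$.
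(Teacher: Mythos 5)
Your proposal is correct and uses essentially the same argument as the paper: both reduce the claim to base-point-freeness of the canonical system via Riemann--Roch applied to $K-p$, together with the observation that a non-constant meromorphic function with a single simple pole would force $X\cong\mathbb{P}^1$. The only cosmetic difference is the direction of the deduction — the paper derives $\ell(p)=2$ from the assumed common zero and concludes $X$ is the sphere, while you first establish $\ell(p)=1$ and conclude $\ell(K-p)=g-1<g$ — and your separate treatment of $g=1$ is harmless but not needed.
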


\begin{proof}
By contradiction, assume that there is a point $p$ where a basis of differentials vanish. Let $K$ be the canonical line bundle.  This implies $h^0(K+p)=h^0(K)=g$, where $h^0(L)$ is the dimension of the space of holomorphic sections of a line bundle $L$ over $X$. It then follows from Riemann-Roch that $h^0(X,-p)=2$, and there is a non-constant meromorphic function on $X$ with one simple pole. This implies $X$ is the Riemann sphere.
\end{proof}

The \emph{period matrix} $\Pi(X)$ is the $g \times g$ complex matrix with components defined by
$$\tau_{ij} = \int_{b_i}\theta_j.$$
It is a classical result that it is symmetric with positive definite imaginary part.

\

\noindent \textbf{Derivative of the Period Matrix}: The space of Beltrami differentials is dual to the cotangent space of quadratic differentials on $X$.  In the moduli space $\cH(\kappa)$, one can consider a Beltrami differential $\mu$ and take a derivative of $\Pi(X)$ in $\cH(\kappa)$ in direction $\mu$.  

Given an Abelian differential $\omega$, a unique Beltrami differential is given by the quotient
$$\mu_{\omega} = \frac{\bar \omega}{\omega},$$
which is defined everywhere except at the finitely many zeros and poles of $\omega$.  

The significance of this for us is \cite[Cor. 7.1]{ForniHand}.

\begin{lemma}[Forni]
\label{DerPerMat}
An $\splin$ orbit has completely degenerate Kontsevich-Zorich spectrum if and only if the derivative of the period matrix at every point on the orbit is a rank one matrix.
\end{lemma}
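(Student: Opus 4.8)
The plan is to identify the derivative of the period matrix with Forni's second fundamental form on the Hodge bundle and then translate his characterization of the Lyapunov spectrum into a rank condition. First I would write down the Rauch variational formula for the variation of $\Pi(X)$ in the direction of the Beltrami differential $\mu_{\omega} = \bar\omega/\omega$. In the normalized basis $\{\theta_1,\dots,\theta_g\}$ this produces a symmetric $g\times g$ complex matrix with entries
$$\dot\tau_{ij} = \int_X \mu_{\omega}\, \theta_i\theta_j = \int_X \frac{\bar\omega}{\omega}\,\theta_i\theta_j,$$
where $\theta_i\theta_j$ is read as a holomorphic quadratic differential and $\mu_{\omega}\,\theta_i\theta_j$ as an integrable $(1,1)$-form. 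The essential observation is that this matrix is precisely the Gram matrix, in the basis $\{\theta_i\}$, of the complex-bilinear symmetric form
$$B_{\omega}(\alpha,\beta) = \int_X \frac{\bar\omega}{\omega}\,\alpha\beta, \qquad \alpha,\beta \in H^{1,0}(X),$$
so that $\operatorname{rank}(\dot\Pi) = \operatorname{rank}(B_{\omega})$, a quantity intrinsic to $(X,\omega)$ and independent of every choice of basis.

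Next I would recall Forni's computation of the derivative of the Hodge norm along the Teichm\"uller geodesic flow $g_t$: for a real class $c$ with Hodge component $c^{1,0}\in H^{1,0}(X)$ one has $\tfrac{d}{dt}\log\|c\|_t$ expressed through $\operatorname{Re} B_{\omega}(c^{1,0},c^{1,0})$, and the non-negative Lyapunov exponents are controlled by the eigenvalues $0\le \Lambda_1\le\cdots\le \Lambda_g$ of the Hermitian form canonically attached to $B_{\omega}$ relative to the Hodge inner product. Two facts drive the argument. First, $\Lambda_g = 1$ always, realized in the tautological direction $\alpha = \omega$ (indeed $B_{\omega}(\omega,\omega) = \int_X \bar\omega\,\omega$ is a nonzero multiple of the area), which accounts for the unit top exponent and shows $\operatorname{rank}(\dot\Pi)\ge 1$. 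Second, Forni's degeneracy criterion states that $\lambda_2 = \cdots = \lambda_g = 0$ forces $\Lambda_1 = \cdots = \Lambda_{g-1} = 0$ along the support of the flow-invariant measure. The vanishing of these eigenvalues is exactly the statement that $B_{\omega}$ is rank one, equivalently $B_{\omega}(\alpha,\beta) = \kappa\,\ell(\alpha)\ell(\beta)$ for a single linear functional $\ell$ and constant $\kappa \neq 0$, equivalently $\operatorname{rank}(\dot\Pi) = 1$.

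I expect the main obstacle to be passing from the asymptotic, measure-theoretic nature of the Lyapunov exponents to a rank condition holding at \emph{every} point of the orbit. Since the exponents are time averages, a priori $B_{\omega}$ could have higher rank on a small exceptional set; the resolution is Forni's sharper result that complete degeneracy of the spectrum is equivalent to the pointwise degeneracy of $B_{\omega}$ on the support of the measure, which for a Teichm\"uller curve is the entire closed $\splin$ orbit. To upgrade this to literally every point I would invoke the real-analytic dependence of $B_{\omega}$ on $(X,\omega)$ together with the density of a generic geodesic: the locus where $\operatorname{rank}(\dot\Pi)\le 1$ is closed and of full measure in the orbit, hence all of it. The converse direction then follows by running the equivalences backward, and the only remaining task is the routine bookkeeping of the constants relating the Rauch formula, the Hodge inner product, and Forni's normalization of $B_{\omega}$.
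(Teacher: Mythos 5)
The paper does not prove this lemma at all: it is quoted verbatim as \cite[Cor.~7.1]{ForniHand}, so there is no internal argument to compare against. Your sketch is a faithful reconstruction of Forni's original proof --- the Ahlfors--Rauch formula identifies $\dot\Pi$ with the Gram matrix of $B_\omega(\alpha,\beta)=\int_X(\bar\omega/\omega)\,\alpha\beta$, the Kontsevich--Forni formula expresses $\lambda_1+\cdots+\lambda_g$ as the integral over the orbit of the eigenvalues of the associated Hermitian form with $\Lambda_g\equiv 1$ in the tautological direction, and non-negativity of the $\Lambda_i$ together with continuity of $B_\omega$ and full support of the invariant measure on the closed orbit upgrades the almost-everywhere rank-one condition to every point. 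The only caveat is that the step you label ``Forni's degeneracy criterion'' is essentially the statement being proved, so to make the argument self-contained you would need to actually invoke the Kontsevich--Forni sum formula (rather than the criterion itself) at that point; with that substitution the outline is correct and is, for all practical purposes, the proof the paper is outsourcing to \cite{ForniHand}.
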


\noindent \textbf{Cauchy Kernel}: Given a choice of Lagrangian subspace in $H_1(X,\mathbb Z)$, namely a choice of $a$-cycles, as well as a point $q_0$ on $X$, the \emph{Cauchy kernel}, denoted $K(z,w)$, is the unique differential in $z$ with two simple poles at $z=w$ and $z=q_0$ with residues $\pm\frac{1}{2\pi i}$ and vanishing $a$-periods. We emphasize that the Cauchy kernel is a \emph{differential} in $z$ and a multi-valued \emph{function} in $w$. The reader can verify that in all places where the Cauchy kernel is used, the expressions are well-defined and independent of the marked point $q_0$, see \cite[Sec. 3.3]{HuNortonGenVarFormsAbDiffs} for details. 

Often we will want to remove the pole and consider the holomorphic part of the Cauchy kernel. In particular, we will do this in the neighborhood of a node, and this is done in the usual way. In a fixed local coordinate chart, define
$$\bK(z(p), z(q)) = K(z(p),z(q))-\frac{dz(p)}{2\pi i(z(p)-z(q))}.$$ For convenience of notation, we will also define $\bK(z, w) = K(z,w)$ when $z$ and $w$ are disjoint coordinate neighborhoods on the Riemann surface, i.e. the Cauchy kernel is holomorphic, and there is no pole to remove.

\

\noindent \textbf{Fundamental Normalized Bi-differential}: The \emph{fundamental normalized bi-differential} is $\omega(z,w) = 2\pi id_w K(z,w)$ with the properties that 
\begin{itemize}
\item it is a differential in each variable $z$ and $w$,
\item it has a double pole along the diagonal $z = w$ with bi-residue equal to one,
\item it is symmetric in $z$ and $w$, and
\item it is normalized to have vanishing $a$-periods in each variable.
\end{itemize}
While $\omega$ will be used to refer to a $1$-form, $\omega(z,w)$ with the variables $z$ and $w$ will be exclusively reserved for a bi-differential.

The key fact concerning the fundamental normalized bi-differential which we will use in the proof of Theorem \ref{AtMostTwoParts} is that $b$-periods of the fundamental bi-differential produce the normalized basis of holomorphic differentials in the remaining variable.

\

\noindent \textbf{Jump Problem}:  The jump problem can be stated for any number of parts sharing any number of nodes by linearity. For simplicity of notation we introduce the jump problem with two parts sharing one node. Let $X_1'$ and $X_2'$ be two distinct parts sharing a node on a nodal Riemann surface.  Given differentials $\omega_1'$ and $\omega_2'$ supported on $X_1'$ and $X_2'$, respectively, and satisfying the opposite residue condition at the node (if there is a non-zero residue), we can ask for a differential on the plumbed surface $X^s$ given by plumbing the node between $X_1'$ and $X_2'$.  In spite of satisfying the opposite residue condition, there is no reason a priori to expect the differentials $\omega'_1$ and $\omega'_2$ to match along the seam.  The discrepancy across the seam is called a \emph{jump}.  The \emph{jump problem} involves finding a holomorphic differential, defined up to a normalization condition, on the plumbed surface minus the seams whose jumps across the seams are as prescribed by the problem. The classical solution is, for example, contained in \cite[Ch. 1 $\S$5: Thm. 5.1]{RodinRiemBdProbRiemSurfs}.  However, this solution is not useful for understanding variational formulas where we are interested in studying a parametric jump problem in terms of the plumbing parameters. It involves the Cauchy kernel on the plumbed surface, which itself varies non-trivially in plumbing parameters.  The second named author with Samuel Grushevsky and Igor Krichever introduced a new approach to study the parametric jump problem in~\cite{GrushevskyKricheverNortonRealNormDiffs}.  The second named author with Xuntao Hu then applied the approach to the setup relevant to this paper in~\cite{HuNortonGenVarFormsAbDiffs}.

\begin{example}
Consider the jump problem posed by taking two tori joined at a node where one torus carries a non-zero differential $\omega$ and the other carries the zero differential.  Then the jump problem would be to find a global parametrically defined differential on the plumbed genus two surface minus the seam to correct the error on the seam which arises because $\omega$ and zero do not match across the seam, and such that when the pinching parameter is sent to zero, we recover the original two differentials.
\end{example}


\subsection{Results on ST-Curves}

The following was proven in \cite[Cor. 3.3, $\S$3.6]{MollerShimuraTeich} or \cite[Lem. 4.5]{AulicinoCompDegKZAIS}.

\begin{lemma}
\label{STCurveArith}
Every surface generating an ST-curve is square-tiled.
\end{lemma}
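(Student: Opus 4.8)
The plan is to reconstruct the argument through the variation of Hodge structure (VHS) carried by the Teichm\"uller curve and the behaviour of Lyapunov exponents under Galois conjugation of its trace field. First I would translate the hypothesis: by Proposition~\ref{EqToCDKZSpec} a surface $(X,\omega)$ generates an ST-curve $C$ exactly when its KZ-spectrum is completely degenerate, so that $\lambda_1 = 1$ and $\lambda_2 = \cdots = \lambda_g = 0$; equivalently, by Lemma~\ref{DerPerMat}, the derivative of the period matrix is rank one everywhere on $C$.

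Next I would invoke the decomposition of the weight-one VHS over $C$ due to M\"oller \cite{MollerShimuraTeich}. Let $K$ be the trace field of the Veech group $\mathrm{SL}(X,\omega)$, a totally real number field of degree $r=[K:\bQ]$, and let $\{\sigma\colon K\hookrightarrow\bR\}$ be its embeddings. The real VHS splits into rank-two summands $\bL^\sigma$, one for each embedding, together with a complementary piece. The distinguished summand $\bL=\bL^{\mathrm{id}}$ is the tautological (uniformizing) sub-VHS, whose Higgs field is an isomorphism; this \emph{maximal Higgs} property is precisely what forces its single Lyapunov exponent to equal $\lambda_1=1$. The Lyapunov exponent attached to any rank-two summand equals the normalized degree of its $(1,0)$-Hodge line bundle, and for a polarized weight-one VHS over a curve this degree vanishes if and only if the summand is unitary, i.e. if and only if its Higgs field is identically zero.

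The heart of the matter is that each Galois conjugate $\bL^\sigma$ with $\sigma\neq\mathrm{id}$ is non-unitary. The Higgs field is part of the algebraic data defined over $K$, and Galois conjugation carries the Higgs field of $\bL$ to that of $\bL^\sigma$; since the former is a nonzero isomorphism, so is the latter, whence $\bL^\sigma$ has strictly positive Hodge-bundle degree and therefore a strictly positive Lyapunov exponent. A completely degenerate spectrum permits only one positive exponent, namely $\lambda_1$, so there can be no nontrivial conjugates, forcing $r=1$ and $K=\bQ$. Finally, the theorem of Gutkin and Judge identifies Veech surfaces with trace field $\bQ$ as exactly the arithmetic ones, i.e. branched covers of a torus, which are square-tiled; this yields the conclusion.

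The main obstacle is the third step: establishing that the Higgs field of every conjugate summand $\bL^\sigma$ is nonzero, equivalently that no conjugate of the tautological VHS is unitary. This is exactly the point where the analytic ``completely degenerate'' hypothesis is converted into the arithmetic conclusion $K=\bQ$, and it is precisely the content extracted from M\"oller's analysis of the Higgs field on the conjugate summands; everything else (the Arakelov/Kontsevich--Zorich degree formula for the exponents and the Gutkin--Judge dichotomy) enters as standard input.
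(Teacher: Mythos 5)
The paper offers no proof of its own here: it simply cites \cite[Cor.~3.3, \S 3.6]{MollerShimuraTeich} (and an alternative in \cite{AulicinoCompDegKZAIS}), and your outline follows exactly the route of the first citation -- the VHS decomposition over the trace field $K$, positivity of the Lyapunov exponents of the Galois-conjugate summands, and the Gutkin--Judge dichotomy. So the strategy is the intended one. However, your justification of the crucial step is wrong as stated. You claim that ``Galois conjugation carries the Higgs field of $\bL$ to that of $\bL^\sigma$; since the former is a nonzero isomorphism, so is the latter.'' Galois conjugation acts on the \emph{local system} (the Betti realization with coefficients in $K$); it does not act on the Hodge filtration or the Higgs field, which are transcendental data coming from the period map and are not defined over $K$. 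If your argument were valid, it would prove too much: it would show that the Higgs field of every $\bL^\sigma$ is an \emph{isomorphism}, i.e.\ that each conjugate summand is also maximal Higgs and contributes Lyapunov exponent equal to $1$. That is false (for genus-two Veech surfaces with quadratic trace field one has $\lambda_2=1/3$) and contradicts M\"oller's theorem that $\bL$ is the \emph{unique} maximal Higgs summand.

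The correct route to the non-unitarity of $\bL^\sigma$ goes through the monodromy rather than the Higgs field: the monodromy representation of $\bL^\sigma$ is the Veech-group representation composed with the embedding $\sigma$, so its image is abstractly isomorphic to a finite-index subgroup of the lattice $\mathrm{SL}(X,\omega)$, hence non-elementary and Zariski dense in $\mathrm{SL}_2$. A rank-two weight-one polarized VHS with vanishing Higgs field splits as a direct sum of two flat line subbundles, so its monodromy is reducible and virtually abelian -- a contradiction. With that repair (which is precisely the content of M\"oller's analysis that you correctly flag as the essential input), the rest of your argument -- the degree of the Hodge line bundle computes the exponent, complete degeneracy forbids a second positive exponent, hence $[K:\bQ]=1$, hence the surface is square-tiled by Gutkin--Judge -- is sound and agrees with the cited proof.
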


The following was proven in \cite[Lem. 5.3]{MollerShimuraTeich} and in \cite[Cor. 5.6]{AulicinoCompDegKZ} in general.

\begin{proposition}
\label{AllCylsHom}
Given a cylinder decomposition in any periodic direction on a square-tiled surface generating an ST-curve, the core curves of the cylinders generate a $1$-dimensional subspace of homology, i.e. every cylinder is homologous to every other cylinder.
\end{proposition}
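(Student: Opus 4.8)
The plan is to identify the homology classes of the core curves with the image of the unipotent monodromy transformation attached to the cusp of the Teichm\"uller curve determined by the periodic direction, and then to compute that this image is one-dimensional by invoking the Shimura/ST structure of the underlying variation of Hodge structure. By the Veech dichotomy the generating surface is completely periodic, so the chosen direction yields a cylinder decomposition with core curves $\gamma_1,\dots,\gamma_k$, and the associated multi-twist $\cD=\prod_i T_{\gamma_i}^{m_i}$, with all $m_i$ of the same sign since the shear is in a single direction, lies in the Veech group and generates the monodromy around the corresponding cusp.

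First I would pin down the rank of $\cD-I$ purely homologically. Because the $\gamma_i$ are disjoint we have $\langle\gamma_i,\gamma_j\rangle=0$, the twists commute, and the action on $H_1(X,\bZ)$ is
$$(\cD-I)\alpha=\sum_i m_i\,\langle\alpha,\gamma_i\rangle\,\gamma_i.$$
Consider the symmetric bilinear form $B(\alpha,\beta):=\langle(\cD-I)\alpha,\beta\rangle=-\sum_i m_i\,\langle\alpha,\gamma_i\rangle\,\langle\beta,\gamma_i\rangle$. Since the $m_i$ all have the same sign, $B$ is semidefinite, so its radical equals its isotropic cone, namely $\{\alpha:\langle\alpha,\gamma_i\rangle=0\ \forall i\}=(\operatorname{span}\{\gamma_i\})^{\perp}$. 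As the intersection form is nondegenerate one has $\dim V^{\perp}=2g-\dim V$ and $\operatorname{rank}B=\operatorname{rank}(\cD-I)$, whence $\operatorname{rank}(\cD-I)=\dim\operatorname{span}\{\gamma_i\}$. Thus it suffices to prove $\operatorname{rank}(\cD-I)=1$, and since $\cD-I$ is defined over $\bZ$ its rank may be computed after any field extension.

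To compute this rank I would pass to the variation of Hodge structure. By Proposition~\ref{EqToCDKZSpec} the curve has completely degenerate Kontsevich--Zorich spectrum, equivalently (Lemma~\ref{DerPerMat}) the derivative of the period matrix has rank one; in Hodge-theoretic terms the second fundamental form of the Hodge bundle has rank one, so over $\bR$ the local system splits as $H^1=\bL\oplus\bM$ with $\bL$ the rank-two uniformizing summand (carrying the rank-one Higgs field) and $\bM$ a unitary local system. On $\bM$ the monodromy lies in a compact group, hence is semisimple; since $\cD$ acts unipotently on all of $H^1$ as a product of transvections, its restriction to $\bM$ is both unipotent and semisimple, hence trivial. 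On $\bL$ the cusp monodromy is a parabolic element of the Fuchsian group uniformizing the Teichm\"uller curve, i.e.\ a single unipotent Jordan block in $\splin$, so $(\cD-I)|_{\bL}$ has rank exactly one. Therefore $\operatorname{rank}(\cD-I)=1$, and combined with the previous paragraph $\dim\operatorname{span}\{\gamma_i\}=1$; the span is at least one-dimensional because each core curve has $\int_{\gamma_i}\omega\neq 0$ and is hence nonzero in homology. This is the desired statement.

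The hard part is exactly the input of the third paragraph: translating the ST/completely-degenerate hypothesis into the precise statement that the cusp monodromy has a rank-one unipotent part. Concretely one must (i) produce the unitary complement $\bM$ from the rank-one period-matrix derivative, where the Shimura structure of \cite{MollerShimuraTeich} is essential, and (ii) identify the remaining rank-two summand $\bL$ with the uniformizing representation so that its parabolic is a single Jordan block rather than two. Everything else---the transvection formula, the semidefinite-form rank computation, and the field-of-definition remark---is routine once this structural decomposition is in hand.
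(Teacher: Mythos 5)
The paper does not actually reprove this proposition --- it simply cites M\"oller [Lem.~5.3] and Aulicino [Cor.~5.6] --- and your argument is correct and is essentially the one in M\"oller's cited lemma: the Picard--Lefschetz formula for the cusp multi-twist reduces the claim to $\operatorname{rank}(\cD-I)=\dim\operatorname{span}\{\gamma_i\}$, and the ST-structure $H^1=\bL\oplus\bM$ with $\bM$ unitary forces the unipotent cusp monodromy to be trivial on $\bM$ and a rank-one parabolic on $\bL$. So the proposal is sound and takes the same route as the source the paper relies on.
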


It was proven in \cite[Lem. 5.9]{AulicinoCompDegKZ} in general using the terminology connectivity graph and also in \cite[Lem. 5.2]{MollerShimuraTeich} for ST-curves that the dual graph always has the following form.  Proposition \ref{AllCylsHom} and Lemma \ref{STDualGraphCycle} are equivalent statements.

\begin{lemma}
\label{STDualGraphCycle}
The dual graph of a boundary point of an ST-curve is always a cycle, also called a ring.
\end{lemma}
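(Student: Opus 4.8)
The plan is to prove the equivalent statement in the language of homology supplied by Proposition \ref{AllCylsHom}, and then translate that homological condition into the combinatorial structure of the dual graph. A boundary point of the ST-curve is obtained by choosing a cylinder decomposition in a periodic direction and sending the moduli of the cylinders to infinity, so the core curves $\gamma_1,\dots,\gamma_E$ of the cylinders become the nodes. Thus the dual graph $\Gamma$ has one vertex for each part, one edge $e_i$ for each $\gamma_i$, and it is connected because $X$ is connected. By Proposition \ref{AllCylsHom} the classes $[\gamma_i]$ all coincide with a single class $\alpha\in H_1(X;\bZ)$ and the subspace they span is $1$-dimensional, so in particular $\alpha\neq 0$. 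The whole statement will follow once I establish the topological identity $\dim_{\bQ}\mathrm{span}\{[\gamma_i]\}=b_1(\Gamma)$, where $b_1(\Gamma)=E-V+1$ is the first Betti number of the (connected) dual graph.

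To prove this identity, set $C=\bigsqcup_i\gamma_i$ and use the long exact sequence of the pair $(X,C)$ over $\bQ$:
\[
H_2(X;\bQ)\xrightarrow{\ \iota\ }H_2(X,C;\bQ)\xrightarrow{\ \partial\ }H_1(C;\bQ)\xrightarrow{\ j\ }H_1(X;\bQ).
\]
The image of $j$ is exactly $\mathrm{span}\{[\gamma_i]\}$. Collapsing $C$ identifies $X/C$ with a one-point union of the $V$ closed surfaces obtained from the parts by contracting their boundary circles, so $H_2(X,C;\bQ)\cong\bQ^{V}$, and $\partial$ sends the class of a part to the signed sum of the curves bounding it, each such boundary being null-homologous in $X$. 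Since $X$ is a connected closed orientable surface, $H_2(X;\bQ)\cong\bQ$ and $\iota$ is injective, so $\dim\mathrm{im}\,\partial=V-1$. Hence $\dim\mathrm{im}\,j=E-(V-1)=b_1(\Gamma)$, which is the claimed identity. Combined with Proposition \ref{AllCylsHom}, this forces $b_1(\Gamma)=1$.

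What remains is a purely combinatorial step. Because a simple closed curve on a surface is null-homologous if and only if it is separating, the condition $\alpha\neq 0$ means that no $\gamma_i$ is separating, i.e.\ no edge $e_i$ is a bridge of $\Gamma$. In particular $\Gamma$ has no vertex of degree $1$, since the unique edge at such a vertex would be a bridge, and hence every vertex has degree at least $2$. A connected graph with $b_1(\Gamma)=1$ satisfies $E=V$, so the handshake identity $2E=\sum_v\deg(v)\geq 2V$ is forced to be an equality, whence every vertex has degree exactly $2$. A connected $2$-regular graph is a cycle, completing the proof.

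I expect the main obstacle to be the topological identity $\dim_{\bQ}\mathrm{span}\{[\gamma_i]\}=b_1(\Gamma)$: it is the precise bridge between the homological statement of Proposition \ref{AllCylsHom} and the graph-theoretic statement of the lemma, and the real content lies in correctly identifying $H_2(X,C)$ together with the boundary map $\partial$ (equivalently, in verifying that the only linear relations among the $[\gamma_i]$ are those coming from the null-homologous boundaries of the parts). Everything after that is elementary graph theory.
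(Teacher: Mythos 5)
Your argument is correct, but it is worth noting that the paper does not actually prove Lemma \ref{STDualGraphCycle}: it cites \cite[Lem.\ 5.2]{MollerShimuraTeich} and \cite[Lem.\ 5.9]{AulicinoCompDegKZ} and simply asserts that the lemma is equivalent to Proposition \ref{AllCylsHom}. What you have done is supply a complete, self-contained proof of the implication from Proposition \ref{AllCylsHom} to the lemma, and the route you take is the natural one: the identity $\dim_{\bQ}\mathrm{span}\{[\gamma_i]\}=E-V+1=b_1(\Gamma)$ is exactly the bridge that makes the paper's claimed ``equivalence'' precise, and your derivation of it from the long exact sequence of $(X,C)$ is sound --- $H_2(X,C;\bQ)\cong\bQ^{V}$ (one relative fundamental class per part), $\iota$ injective because the image of $[X]$ is $(1,\dots,1)$, hence $\dim\ker j=V-1$. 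The translation ``$\gamma_i$ separating in $X$ iff $e_i$ is a bridge of $\Gamma$'' and the handshake argument forcing $2$-regularity are also correct. Two small remarks: first, your endgame silently admits the degenerate multigraph cases $V=E=1$ (one vertex with a loop) and $V=E=2$ (a doubled edge); both are cycles in the multigraph sense, so the statement as written still holds, and the paper excludes the one-part case separately only later, in Lemma \ref{AtLeastTwoParts}, while the two-part ``banana'' case is genuinely allowed. Second, you use only one direction of the equivalence the paper asserts; the converse (cycle implies $1$-dimensional span) follows from the same Betti-number identity, so your computation in fact establishes the full equivalence claimed in the text. In short, the proof is correct and fills in details the paper delegates to references.
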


For the following corollary, see \cite[Lem. 5.2, 5.4]{MollerShimuraTeich}.

\begin{corollary}
\label{PartsPosGenus}
Each part of a degenerate surface in the cusp of an ST-curve has positive genus.
\end{corollary}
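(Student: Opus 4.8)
The plan is to combine the cyclic structure of the dual graph from Lemma~\ref{STDualGraphCycle} with a zero–pole count for the limiting differential on each part, and then to rule out genus zero by a degenerate–cylinder argument.

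First I would record the local structure of each part. By Lemma~\ref{STDualGraphCycle} the dual graph of the nodal surface $X'$ is a cycle, so every vertex has degree exactly two; hence each part $X_i'$ meets exactly two nodes and therefore carries exactly two punctures $p_i^+, p_i^-$. By the description of degenerate surfaces above, the limiting differential $\omega_i'$ on $X_i'$ is meromorphic with a simple pole at each puncture, and holomorphic and not identically zero elsewhere. (The nonzero residues are the holonomies of the pinched core curves, which are equal up to sign by Proposition~\ref{AllCylsHom}, although for the count below only the existence of the two simple poles is needed.)

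Next I would count zeros. Writing $g_i$ for the genus of $X_i'$, the divisor of the meromorphic differential $\omega_i'$ has degree $2g_i-2$. Since $\omega_i'$ has precisely two simple poles, its total order of zeros equals $2g_i-2+2 = 2g_i$. In particular, if $g_i=0$ then $\omega_i'$ has no zeros at all: it is a differential on $\bP^1$ with exactly two simple poles and nowhere vanishing, so in a suitable coordinate $\omega_i' = c\,\frac{dz}{z}$ for some $c \neq 0$. Flatly, this part is a bi-infinite cylinder carrying no singularity.

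The final, and main, step is to derive a contradiction from such a genus–zero part, and the difficulty lies in choosing the route compatible with the precise meaning of ``degenerate surface.'' If $X'$ is taken to be the stable curve representing the cusp in the Deligne--Mumford compactification, then a genus–zero component carrying only two special points (its two nodes) is unstable, which is the contradiction. Alternatively, working directly with the flat limit, the bi-infinite cylinder $X_i'$ is parallel to, and separated by no singularity from, the two adjacent pinching cylinders whose core curves produce the neighboring nodes; these parallel annuli would then constitute a single maximal cylinder, contradicting that they are distinct cylinders of the decomposition (equivalently, distinct nodes). Either way $g_i \geq 1$. The case of a single part (a dual graph with one vertex and a loop) needs no such argument: the arithmetic genus formula $g = g_1 + 1$ forces $g_1 = g-1 \geq 1$. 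I expect the genus–zero exclusion to be the only genuine obstacle, precisely because it requires pinning down whether the cusp is the stable model or the raw flat limit; once that is fixed, the contradiction is immediate.
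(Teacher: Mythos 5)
Your proof is correct, and its core computation is the same as the paper's: the limiting differential on a part $X_i'$ has exactly two simple poles (one at each of the two nodes forced by the ring structure of Lemma \ref{STDualGraphCycle}), so its zeros have total order $2g_i-2+2=2g_i$, and one only needs to produce a zero on the part to force $g_i\geq 1$. Where you diverge is in how that zero is produced. The paper simply invokes the fact that the boundary between two adjacent homologous cylinders consists of zeros of order at least two, giving $2g_i-2\geq 2-2=0$ directly. You instead argue by contradiction that a zero-free part is a bare flat annulus ($\omega_i'=c\,dz/z$ on $\bP^1$), which would merge with the adjacent cylinders into a single larger cylinder, contradicting maximality of the cylinder decomposition (equivalently, instability of a genus-zero component with two special points in the Deligne--Mumford model). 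Your route is slightly more self-contained, since it needs only ``at least one zero'' rather than the order-two statement about boundaries of homologous cylinders, and your worry about stable model versus raw flat limit is resolved by the paper's setup: the cusp is taken in the Deligne--Mumford compactification, and in any case the flat maximality argument closes the gap independently of that choice. Both arguments are sound; the paper's is shorter because it leans on structural facts about homologous cylinders already established in \cite{MollerShimuraTeich}.
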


\begin{proof}
The boundary between two homologous adjacent cylinders consists of zeros of order at least two.  Pinching the core curves of those cylinders results in exactly two simple poles.  Therefore, by Riemann-Roch the genus of the resulting part is at least one.
\end{proof}

The most significant result concerning the geometry of a square-tiled surface generating an ST-curve is \cite[Cor. 5.15]{MollerShimuraTeich}.  Without this, there would be no hope of reducing the problem to a finite checkable list.  We restate it here because it is fundamental to all of the computer calculations.

\begin{corollary}[M\"oller]
\label{MCor515}
An ST-curve generated by a surface $(X, \omega)$ can only exist in the strata listed in Table \ref{StrataListTable}.  If $\pi_{opt}: X \rightarrow E$ is the optimal torus cover, then its degree $d_{opt}$ is also given in the same table in the second column.
\end{corollary}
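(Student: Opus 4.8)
The plan is to combine the geometry of the degenerate surface at a cusp with the torus-cover description of square-tiled surfaces, and to invoke the rank-one condition only at the end, where it cuts an a priori infinite numerology down to a finite table. First I would record the covering picture. By Lemma \ref{STCurveArith} the generating surface $(X,\omega)$ is square-tiled, so $\omega=\pi_{opt}^*\eta$ for the optimal cover $\pi_{opt}\colon (X,\omega)\to (E,\eta)$, with $\eta$ nowhere vanishing on the torus $E$. Consequently the zeros of $\omega$ are exactly the ramification points of $\pi_{opt}$, a zero of order $m_j$ corresponding to ramification index $m_j+1$; this identifies the stratum $\cH(\kappa)$ with $\kappa=(m_1,\dots,m_r)$, makes the relation $\sum_j m_j=2g-2$ automatic via Riemann-Hurwitz over a genus-one base, and leaves $d_{opt}=\sum_j(m_j+1)+u$ (with $u$ the number of unramified points in the branch fibers) undetermined by counting alone.

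Next I would degenerate along a completely periodic direction and read off the shape of the cusp. By Proposition \ref{AllCylsHom} all cylinders are homologous, by Lemma \ref{STDualGraphCycle} the dual graph is a $k$-cycle, and by Corollary \ref{PartsPosGenus} each part $X_i$ has genus $g_i\ge 1$. Preservation of arithmetic genus for a cycle of curves gives $\sum_{i=1}^k g_i=g-1$, hence $\sum g_i=4$ and $1\le k\le 4$ in genus five. Since every vertex of a cycle has valence two, each part acquires exactly two punctures, and the limiting differential $\omega_i$ is meromorphic with two simple poles carrying opposite residues (nonzero, since the pinched core curve has nonzero period). Thus $\omega_i$ has $2g_i$ zeros, and these account for all zeros of $\omega$, so $\kappa$ is partitioned across the parts, the block on $X_i$ being a partition of $2g_i$.

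At this stage the bookkeeping is hopelessly loose: every partition of $8$ spread over at most four positive-genus pieces survives (even $\cH(5,3)$, arising from a single genus-four part), and $d_{opt}$ is unconstrained. The decisive input is complete degeneracy, equivalently (Lemma \ref{DerPerMat}) that the derivative of the period matrix in the direction $\mu=\bar\omega/\omega$ is rank one. I would substitute $\mu$ into the Rauch variational formula and impose that the symmetric matrix with entries $\int_X \frac{\theta_i\theta_j}{\omega}\,\overline{\omega}$ be rank one in a basis $\{\theta_i\}$ adapted to the tautological line and to the holomorphic forms supported on the individual parts. Singling out the tautological form, rank one forces every entry involving a non-tautological form to vanish, i.e. the orthogonal complement of the tautological plane is a flat, unitary subbundle of the Hodge bundle. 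I expect this flatness to bound both the genera $g_i$ and the admissible per-part partitions so severely that only finitely many $\kappa$ remain; in the known examples (Eierlegende Wollmilchsau, Ornithorynque) every part is elliptic, so I would first test whether flatness forces $g_i=1$ outright, a genus-$\ge 2$ part contributing extra holomorphic forms that cannot all carry zero exponent without contradicting positivity of the tautological subbundle. Tracing the ramification of $\pi_{opt}$ compatible with the surviving structures then pins $d_{opt}$ in each case.

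The main obstacle is precisely the step ``rank-one derivative $\Rightarrow$ elliptic (or otherwise tightly bounded) parts with controlled ramification.'' None of the counting---Riemann-Hurwitz, the arithmetic-genus identity, or the two-poles-per-part tally---distinguishes $\cH(2^4)$ from $\cH(5,3)$ or bounds $d_{opt}$; only the flatness of the non-tautological subbundle does. Making that implication rigorous requires the full variation-of-Hodge-structure splitting and a proof that a higher-genus part would inject a non-flat piece into the complement, which I anticipate being the real content. Once the per-part structure is established, the determination of each $d_{opt}$ and the final tabulation should reduce to short Riemann-Hurwitz bookkeeping.
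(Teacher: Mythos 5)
There is a genuine gap, and you have in fact located it yourself: the step ``rank-one derivative $\Rightarrow$ tightly bounded parts with controlled ramification'' is left as an expectation, and it is precisely this step that carries all the content of Corollary~\ref{MCor515}. Everything you establish rigorously (zeros of $\omega$ are the ramification points of $\pi_{opt}$, the dual graph of a cusp is a cycle, $\sum_i g_i = g-1$, each part has positive genus, the per-part blocks of $\kappa$ are partitions of $2g_i$) is qualitative bookkeeping that, as you note, cannot distinguish the strata in Table~\ref{StrataListTable} from ones that are absent, and says nothing whatsoever about $d_{opt}$: Riemann--Hurwitz over a genus-one base is consistent with any sufficiently large degree, so no amount of ``per-part structure plus Riemann--Hurwitz bookkeeping'' can produce the second column of the table. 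Moreover your proposed first test --- that flatness of the non-tautological subbundle forces every part to be elliptic --- is false: the Ornithorynque is an ST-curve whose cusps have two parts with $g_1+g_2=3$, so a genus-two part does occur.

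What is actually missing is a \emph{global, quantitative} identity rather than a pointwise rank-one analysis. The mechanism (due to M\"oller, and visible in this paper in Theorem~\ref{EKZArithLyapExpSum} and Lemma~\ref{KappaClass}) is: complete degeneracy forces $\lambda_1+\cdots+\lambda_g=1$; a degree computation on the Hodge bundle over the Teichm\"uller curve (Noether-type formula, or the Eskin--Kontsevich--Zorich formula) expresses this sum as $\frac{1}{12}\sum_i \frac{m_i(m_i+2)}{m_i+1}$ plus a cusp/Siegel--Veech contribution; and the homologous-cylinder structure of Proposition~\ref{AllCylsHom} evaluates that contribution in terms of $d_{opt}$, yielding $\frac{1}{12}\sum_i \frac{m_i(m_i+2)}{m_i+1} = 1 - \frac{1}{d_{opt}}$. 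It is the requirement that $d_{opt}$ be a positive integer (together with the combinatorial constraints you did derive) that cuts the infinitely many candidate $\kappa$ down to the finite table and simultaneously determines $d_{opt}$ in each case. Your Rauch-variational/flatness argument, even if completed, is a fiberwise statement about the variation of Hodge structure and would not produce this numerical identity; it is the global degree count that does the work. Note also that the paper itself does not reprove this corollary but imports it from \cite{MollerShimuraTeich}, so a self-contained proof along your lines would in any case need to reconstruct that degree computation.
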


\begin{table}[ht]
\centering
\begin{tabular}{c|c}
Stratum & $d_{opt}$ \\
\hline
$\cH(1, 1, 1, 1)$ & $2$ \\
\hdashline
$\cH(1, 1, 1, 1, 1, 1)$ & $4$ \\
$\cH(2, 2, 2)$ & $3$ \\
\hdashline
$\cH(1, 1, 1, 1, 1, 1,2)$ & $36$ \\
$\cH(1, 1, 1, 1, 2,2)$ & $18$ \\
$\cH(1, 1, 2, 2,2)$ & $12$ \\
$\cH(2, 2, 2,2)$ & $9$ \\
$\cH(1, 1, 1, 1, 1,3)$ & $16$ \\
$\cH(1, 1, 3,3)$ & $8$ \\
$\cH(1, 1, 1, 1,4)$ & $10$ \\
\end{tabular}
\caption{List of Strata Possibly Containing ST-Curves and Values of $d_{opt}$: Dashed lines separate genus}
\label{StrataListTable}
\end{table}



In fact, further restrictions were given in \cite[Lem. 5.17]{MollerShimuraTeich}.  It gives a full description of how cylinders on the torus, which are bounded by marked points rather than zeros, lift to cylinders on the square-tiled surface generating an ST-curve.  We restate it here for the convenience of the reader.

\begin{lemma}[M\"oller]
\label{MThreeCasesLem}
Let $\pi_{opt}: X \rightarrow E$ be the optimal torus covering.  Let $B \subset E$ be the set of points over which $\pi_{opt}$ is branched.  In each of the possible cases of ST-curves listed in Table \ref{StrataListTable}, one of the following three possibilities holds.
\begin{enumerate}[label=(\roman*)]
\item The preimage under $\pi_{opt}$ of each cylinder in $E$ consists of only one cylinder in $X$, or
\item $B$ consists of one element only and each cylinder in $E$ has $k$ pre-images under $\pi_{opt}$ where $2 \leq k \leq g-1$, or
\item $g = 5$ and $B$ is contained in the set of 2-torsion points of $E$.  Moreover, each cylinder in $E$ has two preimages under $\pi_{opt}$.
\end{enumerate}
\end{lemma}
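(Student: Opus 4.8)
The plan is to read the lemma as a statement about how cylinders and branch points behave under $\pi_{opt}$, and to drive everything from three rigidity inputs already in hand: all cylinders in a periodic direction are homologous (Proposition \ref{AllCylsHom}), every degeneration has a dual graph that is a single cycle (Lemma \ref{STDualGraphCycle}), and every part has positive genus (Corollary \ref{PartsPosGenus}). We may assume $(X,\omega)$ lies in one of the strata of Table \ref{StrataListTable} with the listed $d_{opt}$ (Corollary \ref{MCor515}), so in particular $g\le 5$. As a setup, note that the differential on the torus has no zeros, so $\omega=\pi_{opt}^{*}\eta$ forces every zero of $\omega$ of order $a$ to lie over a point of $B$ with ramification index $a+1$; since $g_{E}=1$, Riemann--Hurwitz reads $2g-2=\sum_{p}(e_{p}-1)$ and matches the ramification profile exactly with the stratum, fixing how $B$ and $d_{opt}$ interact.

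First I would prove that in a \emph{fixed} periodic direction every cylinder of $E$ has the same number of preimages. On $E$ the core curves of cylinders in that direction are all homologous, so all base cylinders share one circumference $w$. A connected component $C_{j}$ of the preimage of a base cylinder $C$ is an unbranched covering of the annulus $C$ of some degree $\delta_{j}$, hence has circumference $\delta_{j}w$; but by Proposition \ref{AllCylsHom} all the curves $C_{j}$ are homologous on $X$, so $\int_{C_{j}}\omega=\delta_{j}w$ is independent of $j$. Thus $\delta_{j}=\delta$ is constant and each base cylinder has exactly $k:=d_{opt}/\delta$ preimages. Writing $m$ for the number of cylinders of $E$ in this direction (the number of distinct levels occupied by $B$), $X$ has $mk$ cylinders; pinching their homologous cores gives a nodal surface whose dual graph is a cycle with $mk$ edges (Lemma \ref{STDualGraphCycle}), hence $mk$ vertices, i.e.\ $mk$ parts. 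Each part has genus at least one (Corollary \ref{PartsPosGenus}), and the arithmetic genus of a cycle of parts is $\sum_{i}g_{i}+1$; therefore $g\ge mk+1$, so $mk\le g-1$ in every direction.

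The technical heart, which I expect to be the main obstacle, is to upgrade this to a number of preimages $k$ that is independent of the direction. The computation above shows that in every direction the monodromy of $\pi_{opt}$ around a core curve is a permutation all of whose cycles have the common length $\delta$ ($k$ cycles of length $\delta$, with $k\delta=d_{opt}$). I would combine this ``uniform cycle type in every rational direction'' with the branch relation $\rho([a,b])=\prod_{i}\sigma_{i}$, where $a,b$ generate $\pi_{1}(E)$ and $\sigma_{i}$ is the local monodromy at $b_{i}\in B$ (its cycle type being the ramification over $b_{i}$), tracking how the core permutation changes as one crosses successive levels of branch points. The assertion to be extracted is that no choice of branch cycles can realize uniform cycle type of two different shapes in two different slopes, so the common cycle length $\delta$, and hence $k$, is forced to be the same in every slope. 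Carrying out this bookkeeping over all rational slopes is the delicate step, and it produces a single global $k$.

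Granting a global $k$, the trichotomy falls out. If $k=1$ we are in case (i). If $k\ge 2$ and $|B|=1$, then the single branch point occupies one level in every direction, so $m=1$ always and $mk=k\le g-1$, which is case (ii). If $k\ge 2$ and $|B|\ge 2$, choose two distinct branch points; all but at most one rational direction separate them onto distinct levels, so some direction has $m\ge 2$, and there $mk\ge 4$, forcing $g-1\ge 4$. Since $g\le 5$, this gives $g=5$, $k=2$, and four genus-one parts. With $k=2$ global, $mk\le 4$ now forces $m\le 2$ in \emph{every} rational direction, i.e.\ the points of $B$ occupy at most two levels in every rational direction. Evaluating this on the slopes $(1,0)$ and $(0,1)$ shows each coordinate of the points of $B$ takes at most two values, and the slopes $(1,1)$ and $(1,-1)$ force the two values of each coordinate to differ by $\tfrac{1}{2}$; hence $B$ lies in a coset of the $2$-torsion $E[2]$. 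After translating by an automorphism of $E$ we may take $B\subseteq E[2]$, and conversely $E[2]$ does occupy exactly two levels in every rational direction, so the configuration is consistent. This is case (iii), completing the trichotomy.
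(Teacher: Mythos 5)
First, a point of comparison: the paper does not prove this lemma at all --- it is restated from M\"oller's \cite[Lem.~5.17]{MollerShimuraTeich} ``for the convenience of the reader,'' so there is no in-paper argument to measure you against. Your first two steps are nonetheless correct and are the expected ingredients: Proposition \ref{AllCylsHom} forces every component of the preimage of every base cylinder in a fixed direction to cover with one common degree $\delta$, hence a common count $k=d_{opt}/\delta$ in that direction, and the ring dual graph with positive-genus parts (Lemma \ref{STDualGraphCycle}, Corollary \ref{PartsPosGenus}) gives $mk\le g-1$ via $p_a=\sum_i g_i+1$.

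There are, however, two genuine gaps. The step you yourself call the ``technical heart'' --- that $k$ is independent of the slope --- is never actually carried out: you state the assertion to be extracted from the branch-cycle relation and defer the bookkeeping, but this is precisely where the content of the trichotomy lives. Without it the statement fails: if $|B|\ge 2$ and $k=2$ in one slope while $k=1$ in a generic slope, none of (i)--(iii) holds, so nothing beyond the inequality $mk\le g-1$ has been established. Second, the derivation of the $2$-torsion condition in case (iii) does not work when $|B|=2$: any two points of $E$ occupy at most two levels in \emph{every} rational direction, so the condition ``$m\le 2$ in every slope'' places no constraint on them, and the slopes $(1,1)$ and $(1,-1)$ do not force the coordinates to differ by $\tfrac12$. (Compare the paper's Lemma \ref{Torus2Torsion}, which needs both $s>2$ marked points and \emph{exactly} --- not at most --- two cylinders in every direction.) Ramification profiles with $|B|=2$ are combinatorially possible, e.g.\ in $\cH(1,1,3,3)$ with the two triple zeros over one branch point and the two simple zeros over another, so this case requires an additional input (the optimality of $\pi_{opt}$, or finer monodromy information) rather than the level count alone.
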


\begin{remark}
In fact, the reader may observe that Theorem \ref{AtMostTwoParts} below can be used to exclude Case (iii) because it is logically independent.  However, we will present another proof that addresses Case (ii) and (iii) at once and facilitates a linear exposition.
\end{remark}

\section{Exclusion of Cases (ii) and (iii)}
\label{CaseiiandiiiSect}

From \cite[Lem. 5.17]{MollerShimuraTeich} (Lemma \ref{MThreeCasesLem} above), it was clear that only finitely many ST-curves could satisfy Cases (ii) and (iii).  While it may be possible to exclude these with an involved computer search, a simple application of the formula for the sum of the Lyapunov exponents for an arithmetic Teichm\"uller curve from \cite[Cor. 7]{EskinKontsevichZorich2} suffices to exclude them.

\begin{theorem}[\cite{EskinKontsevichZorich2}]
\label{EKZArithLyapExpSum}
Let $\mathcal{M}_1$ be an arithmetic Teichm\"uller curve defined by a square-tiled surface $S_0$ of genus $g$ in some stratum $\mathcal{H}(m_1, \ldots, m_n)$.  The top $g$ Lyapunov exponents of the cohomology bundle $H^1$ over $\mathcal{M}_1$ along the Teichm\"uller flow satisfy:
$$\lambda_1 + \cdots + \lambda_g = 
\frac{1}{12} \sum_{i=1}^n \frac{m_i(m_i+2)}{m_i+1}$$
$$ + \frac{1}{\text{card}(\text{SL}_2(\mathbb{Z}) \cdot S_0)} \sum_{S_i \in \text{SL}_2(\mathbb{Z}) \cdot S_0} \sum_{\substack{\text{horiz. cyls. s.t.} \\ S_i = \sqcup \text{cyl}_{ij}}} \frac{h_{ij}}{w_{ij}},$$
where $h_{ij}$ and $w_{ij}$ are the height and width of cylinder $\text{cyl}_{ij}$, respectively.
\end{theorem}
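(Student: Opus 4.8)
The statement is the Eskin--Kontsevich--Zorich sum formula, so the plan is to recover it along the lines of their degree-of-Hodge-bundle strategy, specialized to the arithmetic (square-tiled) setting. The starting point is the Forni--Kontsevich identity, which reinterprets the Lyapunov sum as a curvature integral: since $\Lambda^g H^{1,0}$ is the determinant of the Hodge bundle $\mathbb{E}$, and the top exponent of the exterior power cocycle is $\lambda_1 + \cdots + \lambda_g$, one has
$$\lambda_1 + \cdots + \lambda_g = \frac{1}{\mathrm{vol}(\mathcal{M}_1)}\int_{\mathcal{M}_1} c_1(\mathbb{E}, \|\cdot\|_{\mathrm{Hodge}}),$$
with the hyperbolic metric normalized so that the Teichm\"uller flow is unit speed. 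Writing the arithmetic Teichm\"uller curve as $\mathcal{M}_1 = \mathbb{H}/\Gamma$ with $\Gamma \leq \text{SL}_2(\mathbb{Z})$ of finite index, and passing to the smooth compactification $\overline{\mathcal{M}_1}$ obtained by adding the cusps, the first task is to convert this integral into the degree of the Deligne canonical extension $\overline{\mathbb{E}}$ over $\overline{\mathcal{M}_1}$, so that $\sum_i \lambda_i = 2\deg \overline{\mathbb{E}}/|\chi(\mathcal{M}_1)|$ up to the chosen normalization.

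Next I would split $\deg\overline{\mathbb{E}}$ into a bulk term and a cusp term. For the bulk term, the tautological line $\mathbb{C}\omega$ sits inside $\mathbb{E}$, and the difference between the Hodge curvature and the curvature of this sub-line-bundle is governed by the second fundamental form, which concentrates at the zeros of $\omega$. A local model computation on a disc carrying a cone point of angle $2\pi(m_i+1)$ shows that the $i$-th zero contributes exactly $\frac{1}{12}\cdot\frac{m_i(m_i+2)}{m_i+1}$; summing over zeros yields the first term of the formula. For the cusp term, I would use the nilpotent orbit theorem to control the logarithmic degeneration of the Hodge norm as a plumbing parameter $s\to 0$: a cylinder of height $h$ and width $w$ being pinched makes the relevant entry of the period matrix grow like $\frac{h}{w}\cdot\frac{\log|s|}{2\pi i}$, and the resulting defect between the naive and the canonical extension contributes the modulus $h/w$. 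Thus each cusp contributes the sum of moduli of the cylinders pinched in its parabolic direction.

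It then remains to organize the cusp contributions into the stated average. Because the surface is square-tiled, $\overline{\mathcal{M}_1}\to\overline{\text{SL}_2(\mathbb{Z})\backslash\mathbb{H}}$ is a finite cover of degree $N=\text{card}(\text{SL}_2(\mathbb{Z})\cdot S_0)$, and the cusps of $\mathcal{M}_1$ are in bijection with the $\Gamma$-orbits of parabolic directions, each corresponding to a cylinder decomposition of a surface in the orbit. Keeping track of the ramification (the width of each cusp over the standard cusp of the modular curve), the total cusp contribution reorganizes as $\frac{1}{N}\sum_{S_i}\sum_{\text{horiz. cyls.}}\frac{h_{ij}}{w_{ij}}$, where the inner sum runs over the horizontal cylinder decomposition of $S_i$ and the outer sum over the whole $\text{SL}_2(\mathbb{Z})$-orbit; equivalently, this quantity is $\frac{\pi^2}{3}$ times the area Siegel--Veech constant, whose value for arithmetic surfaces is precisely this finite combinatorial average by Veech/Eskin--Masur equidistribution. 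Combining the bulk and cusp terms gives the formula.

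The main obstacle is the analytic heart of the argument: establishing the Forni--Kontsevich identity rigorously, equating the Lyapunov sum with the \emph{normalized degree} rather than a mere integral bound, and controlling the boundary behavior precisely enough that the cusp defects are computed on the nose. In particular, the passage to the Deligne extension and the matching of the logarithmic growth rate of the Hodge metric with the cylinder modulus $h/w$ require the nilpotent orbit estimates together with a careful choice of normalization; this is also where the factor $\frac{1}{12}$ and the exact rational $\frac{m_i(m_i+2)}{m_i+1}$ must be pinned down, and where I expect the bulk of the technical work to lie.
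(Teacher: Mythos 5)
This statement is quoted by the paper as an external result (it is \cite[Cor.~7]{EskinKontsevichZorich2} combined with the square-tiled evaluation of the Siegel--Veech constant from \cite[\S 10]{EskinKontsevichZorich2}); the paper gives no proof of it, so there is no internal argument to compare yours against. Measured against the actual Eskin--Kontsevich--Zorich proof, your outline is a faithful roadmap: identifying $\lambda_1+\cdots+\lambda_g$ with the normalized degree of the Hodge bundle over the compactified Teichm\"uller curve, splitting that degree into a local term at the conical points giving $\frac{1}{12}\sum_i \frac{m_i(m_i+2)}{m_i+1}$ plus a cusp term equal to $\frac{\pi^2}{3}$ times the area Siegel--Veech constant, and evaluating that constant for square-tiled surfaces as the finite average over the $\text{SL}_2(\mathbb{Z})$-orbit, are precisely the three pillars of their argument.

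What you have written is, however, an outline rather than a proof: each step you defer is itself a theorem occupying a substantial part of a long paper (the rigorous form of the Forni--Kontsevich identity equating the Lyapunov sum with the curvature integral, the exact local computation producing $\frac{1}{12}\cdot\frac{m(m+2)}{m+1}$, the matching of the logarithmic degeneration of the Hodge norm at a cusp with the sum of cylinder moduli, and the Veech/Eskin--Masur input needed to identify the Siegel--Veech constant with the combinatorial average). One point in the sketch deserves correction: the second fundamental form of the tautological sub-line-bundle $\mathbb{C}\omega\subset\mathbb{E}$ controls only the top exponent $\lambda_1$, whereas the full sum requires the curvature of the entire Hodge bundle; in EKZ the bulk coefficient is extracted from an analytic Riemann--Roch argument involving the determinant of the flat Laplacian with conical singularities, where the cone angle $2\pi(m_i+1)$ is what produces the rational number $\frac{1}{12}\cdot\frac{m_i(m_i+2)}{m_i+1}$. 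As used in the present paper the theorem is a black box, and the citation is the appropriate ``proof.''
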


\begin{lemma}
\label{KappaClass}
For all of the strata given in Table \ref{StrataListTable}, we have
$$\frac{1}{12} \sum_{i=1}^n \frac{m_i(m_i+2)}{m_i+1} = 1 - \frac{1}{d_{opt}}.$$
\end{lemma}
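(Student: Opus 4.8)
The plan is to prove the identity by direct computation. Note that the right-hand side depends on the specific values $d_{opt}$ furnished by Corollary \ref{MCor515}, not on any closed-form function of the partition; there is no structural shortcut that produces $1 - 1/d_{opt}$ from the stratum alone, so a finite verification over the ten rows of Table \ref{StrataListTable} is what is required. The content of the lemma is precisely that M\"oller's values are consistent with the EKZ constant $\frac{1}{12}\sum_i \frac{m_i(m_i+2)}{m_i+1}$ attached to each stratum.

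To make each verification a single line of rational arithmetic, I would first record the elementary identity
$$\frac{m(m+2)}{m+1} = \frac{(m+1)^2 - 1}{m+1} = (m+1) - \frac{1}{m+1},$$
valid for every positive integer $m$. Summing over the zeros $m_1, \ldots, m_n$ of $\cH(m_1, \ldots, m_n)$ and using $\sum_i m_i = 2g - 2$ gives
$$\sum_{i=1}^n \frac{m_i(m_i+2)}{m_i+1} = \sum_{i=1}^n (m_i+1) - \sum_{i=1}^n \frac{1}{m_i+1} = (2g - 2 + n) - \sum_{i=1}^n \frac{1}{m_i+1},$$
so that the asserted equality becomes
$$(2g - 2 + n) - \sum_{i=1}^n \frac{1}{m_i+1} = 12\left(1 - \frac{1}{d_{opt}}\right).$$
In this form only the four rational values $\frac{m(m+2)}{m+1} \in \left\{\tfrac32, \tfrac83, \tfrac{15}{4}, \tfrac{24}{5}\right\}$ for $m \in \{1,2,3,4\}$ appear, so every row reduces to combining these with the correct multiplicities.

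Next I would run through the ten strata in turn. For instance, in $\cH(1,1,1,1)$ the left-hand side is $\frac{1}{12}\cdot 4 \cdot \frac32 = \frac12 = 1 - \frac12$, matching $d_{opt} = 2$; in $\cH(2,2,2)$ it is $\frac{1}{12}\cdot 3 \cdot \frac83 = \frac23 = 1 - \frac13$, matching $d_{opt} = 3$; and in the genus-five case $\cH(1,1,1,1,1,1,2)$ it is
$$\frac{1}{12}\left(6 \cdot \frac32 + \frac83\right) = \frac{1}{12}\cdot\frac{35}{3} = \frac{35}{36} = 1 - \frac{1}{36},$$
matching $d_{opt} = 36$. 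The remaining seven rows are checked identically, in each case combining the tabulated multiplicities of $m \in \{1,2,3,4\}$ against the value of $d_{opt}$ read off from Table \ref{StrataListTable}.

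The ``hard part'' here is therefore not conceptual but organizational: one must be certain that all ten rows are genuinely verified and that the $d_{opt}$ values are transcribed correctly. Accordingly, I would present the final computation as a compact table listing, for each stratum, the multiplicities of each zero order, the resulting value of $\frac{1}{12}\sum_i \frac{m_i(m_i+2)}{m_i+1}$, and the value $1 - 1/d_{opt}$, so that the reader can confirm the equality row by row at a glance.
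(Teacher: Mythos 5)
Your proposal is correct: all ten rows of Table \ref{StrataListTable} do check out (I have verified the seven you omit; e.g.\ $\cH(2,2,2,2)$ gives $\frac{1}{12}\cdot 4\cdot\frac{8}{3}=\frac{8}{9}=1-\frac19$ with $d_{opt}=9$, and $\cH(1,1,1,1,4)$ gives $\frac{1}{12}\bigl(6+\frac{24}{5}\bigr)=\frac{9}{10}=1-\frac{1}{10}$ with $d_{opt}=10$), and your reduction via $\frac{m(m+2)}{m+1}=(m+1)-\frac{1}{m+1}$ is a clean way to organize the arithmetic. However, your route differs from the paper's, and your framing premise is slightly off: you assert that ``there is no structural shortcut that produces $1-1/d_{opt}$ from the stratum alone,'' but the paper's entire proof is exactly such a shortcut. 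The values of $d_{opt}$ in Table \ref{StrataListTable} are not independent data to be checked against the EKZ constant; they were \emph{derived} by M\"oller in \cite[Cor.\ 5.15]{MollerShimuraTeich} from a general formula that is equivalent to $d_{opt}=\bigl(1-\frac{1}{12}\sum_i\frac{m_i(m_i+2)}{m_i+1}\bigr)^{-1}$, so the identity holds by construction and the paper's proof is the two-sentence observation that the lemma follows directly from that formula. Your finite verification is nonetheless a perfectly valid proof, and it has the modest advantage of being self-contained and of independently confirming that the $d_{opt}$ values were transcribed correctly into the table; the cost is that it obscures \emph{why} the identity holds and would not survive any extension of the table, whereas the paper's argument would. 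If you keep your version, do include the full ten-row table rather than three representative cases, since the entire content of the proof is then the exhaustiveness of the check.
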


\begin{proof}
The values in Table \ref{StrataListTable} above are derived from the general formula for $d_{opt}$ given in \cite[Cor. 5.15]{MollerShimuraTeich}.  This lemma follows directly from that formula.
\end{proof}

Following the setup of \cite[$\S$ 10]{EskinKontsevichZorich2}, each square in the square-tiled surface is normalized to a unit square.  This implies that in Case (iii), $E$ consists of four unit squares.

\begin{lemma}
\label{SiegelVeechBound}
For any surface satisfying Case (ii) or (iii), there exists a direction such that
$$\sum_{\substack{\text{horiz. cyls. s.t.} \\ S_i = \sqcup \text{cyl}_{ij}}} \frac{h_{ij}}{w_{ij}} \geq \frac{2}{d_{opt}},$$
and in every direction
$$\sum_{\substack{\text{horiz. cyls. s.t.} \\ S_i = \sqcup \text{cyl}_{ij}}} \frac{h_{ij}}{w_{ij}} \geq \frac{1}{d_{opt}}.$$
\end{lemma}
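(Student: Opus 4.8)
The plan is to push the entire computation down to the base torus of the optimal covering $\pi_{opt}\colon X\to E$. Since $\omega=\pi_{opt}^*\eta$, the map $\pi_{opt}$ is a translation covering and therefore sends cylinders to cylinders while preserving heights. Concretely, if $C$ is a cylinder on $E$ of modulus $\mathrm{mod}(C)$ and its preimage on $X$ is a disjoint union of cylinders $\tilde C_1,\dots,\tilde C_{n}$ with $\tilde C_j\to C$ of local degree $d_j$, then $\tilde C_j$ has the same height as $C$ and width $d_j$ times that of $C$, so $\mathrm{mod}(\tilde C_j)=\mathrm{mod}(C)/d_j$, and $\sum_j d_j=d_{opt}$ because the branch points lie on cylinder boundaries and the restricted covering $\pi_{opt}^{-1}(C)\to C$ is unramified of degree $d_{opt}$. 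Applying Cauchy--Schwarz in the form $\big(\sum_j \tfrac1{d_j}\big)\big(\sum_j d_j\big)\ge n^2$ then gives the key local estimate $\sum_{j}\mathrm{mod}(\tilde C_j)=\mathrm{mod}(C)\sum_j\tfrac1{d_j}\ge \mathrm{mod}(C)\,n^2/d_{opt}$.

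Next I would pin down the base torus. In Case (ii) it is a single unit square $E=\bR^2/\bZ^2$, while in Case (iii), under the normalization that every square is a unit square together with $B\subseteq E[2]$, it is forced to be the square torus $E=\bR^2/2\bZ^2$ built from four unit squares (the requirement that all $2$-torsion points sit at square vertices rules out any sheared or rectangular $4$-square torus). In both cases $E=\bR^2/c\bZ^2$ is a \emph{square} lattice and hence invariant under $\text{SL}_2(\bZ)$. The crucial elementary fact is that for such a torus the sum of the moduli of its horizontal cylinders equals $1$ \emph{in every direction}: all horizontal cylinders share the common width $w=c$ (the shortest horizontal period of $c\bZ^2$), their heights add to the total height $\mathrm{Area}(E)/w=c$, so $\sum_{C}\mathrm{mod}(C)=(\text{total height})/w=c/c=1$, irrespective of how the marked points of $B$ subdivide $E$. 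This is precisely the step that must be understood direction-by-direction, since the EKZ sum in Theorem~\ref{EKZArithLyapExpSum} runs over the horizontal cylinders of each orbit representative $S_i=A\cdot S_0$, whose base torus is $A\cdot E=\bR^2/c\bZ^2$ again.

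Combining the two ingredients, for any $S_i$ in the orbit with base cylinders $C$ (having $n_C\ge 1$ preimages each),
$$\sum_{\text{horiz. cyls.}}\frac{h_{ij}}{w_{ij}}=\sum_C\sum_j\mathrm{mod}(\tilde C_j)\ \ge\ \frac{1}{d_{opt}}\sum_C\mathrm{mod}(C)\,n_C^2\ \ge\ \frac{1}{d_{opt}}\sum_C\mathrm{mod}(C)\ =\ \frac{1}{d_{opt}},$$
which is the bound valid in every direction. For the sharper bound I would invoke Lemma~\ref{MThreeCasesLem}: in Cases (ii) and (iii) there is a periodic direction in which \emph{every} cylinder of $E$ has at least two preimages, i.e.\ $n_C\ge 2$. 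In that direction the middle expression improves to $\tfrac{1}{d_{opt}}\sum_C\mathrm{mod}(C)\,n_C^2\ge \tfrac{4}{d_{opt}}\sum_C\mathrm{mod}(C)=\tfrac{4}{d_{opt}}\ge\tfrac{2}{d_{opt}}$, giving the existence statement.

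The main obstacle is the claim that the base-torus modulus sum is exactly $1$ simultaneously in \emph{all} directions; this is what supplies the otherwise-missing factor in Case (iii), where the crude bound (each horizontal cylinder has height $\ge 1$ and width bounded by the number of squares) only yields $\tfrac{1}{4d_{opt}}$. The resolution rests on the fact that the relevant lattices $\bZ^2$ and $2\bZ^2$ are $\text{SL}_2(\bZ)$-invariant square lattices, so that after transporting any periodic direction to the horizontal one the base torus retains shape $\bR^2/c\bZ^2$ with horizontal width still $c$; a genuine square structure, rather than a merely rectangular or sheared four-square torus, is therefore essential and is exactly what $B\subseteq E[2]$ guarantees. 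A secondary bookkeeping point is the identity $\sum_j d_j=d_{opt}$ for the local degrees over a single base cylinder and the identification of ``direction'' in Theorem~\ref{EKZArithLyapExpSum} with the horizontal cylinders of the orbit representatives $S_i$; once these are in place the two inequalities are immediate.
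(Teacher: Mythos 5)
Your route is genuinely different from the paper's: you push the modulus computation down to the base torus of $\pi_{opt}$, control the lift of each base cylinder via the local degrees $d_j$ with $\sum_j d_j=d_{opt}$, and apply Cauchy--Schwarz, whereas the paper works entirely upstairs. It counts the $d_{opt}$ (resp.\ $4d_{opt}$) unit squares of $X$, uses that all parallel cylinders of $X$ are homologous (Proposition \ref{AllCylsHom}) and hence share a common circumference, and combines the trivial bound $h_{ij}\ge 1$ with the cylinder counts from Lemmas \ref{AtLeastTwoParts} and \ref{MThreeCasesLem} to get $w_{ij}\le 2d_{opt}$ in every direction and $w_{ij}\le d_{opt}$ in a suitable one. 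Your version is conceptually cleaner and never needs Proposition \ref{AllCylsHom}, but it pays for this by requiring precise knowledge of the lattice shape of $E$ --- and that is where there is a genuine gap.

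Concretely, in Case (iii) your chain of inequalities needs $\sum_C\mathrm{mod}(C)=1$ for the horizontal cylinders of every orbit representative $A\cdot E$, which, as you yourself observe, forces $E=\bR^2/2\bZ^2$: for a general index-four sublattice $\Lambda\subset\bZ^2$ the shortest horizontal vector of $A\Lambda$ can have length $4$, in which case $\sum_C\mathrm{mod}(C)=4/16=1/4$ and your estimate with $n_C\ge 1$ only yields $1/(4d_{opt})$. But the only hypothesis available at this point is $B\subseteq E[2]$ with $|B|\ge 2$ (the single-branch-point case being absorbed into Case (ii)); it does \emph{not} say that all $2$-torsion points of $E$ are vertices of the unit squares. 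The vertex $2$-torsion points $E[2]\cap(\bZ^2/\Lambda)$ form a subgroup of $E[2]\cong(\bZ/2)^2$, so $B\subseteq E[2]$ pins down $\Lambda=2\bZ^2$ only when $|B|\ge 3$; with $|B|=2$ a torus such as $\bR^2/(\bZ\times 4\bZ)$ with $B=\{(0,0),(0,2)\}$ is not excluded by anything proved so far (Proposition \ref{STCurve2Torsion}, which would give $|B|\ge 3$, is proved \emph{after} and \emph{using} Theorem \ref{CaseiiAndiiiExclusion}, so you cannot invoke it here). The gap is repairable inside your framework without identifying $\Lambda$: in Case (iii) every base cylinder has exactly $n_C=2$ preimages in every direction, and any index-four $\Lambda$ contains $4\bZ^2$, so the horizontal width $w_A$ of $A\cdot E$ satisfies $w_A\le 4$ and $\sum_C\mathrm{mod}(C)\,n_C^2 = 16/w_A^2\ge 1$ in every direction, while some direction has $w_A\le 2$ (either $\Lambda=2\bZ^2$, or $\bZ^2/\Lambda$ is cyclic and $\Lambda$ contains a primitive vector of $\bZ^2$), giving $16/w_A^2\ge 4$ there. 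With that substitution both claimed bounds follow; as written, the step identifying $E$ with the square torus is unjustified.
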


\begin{proof}
We claim that in every direction $w_{ij} \leq d_{opt}$.  In Case (ii), the surface consists of exactly $d_{opt}$ squares, and in Case (iii), the surface consists of exactly $4d_{opt}$ squares.  By \cite[Lem 5.3]{MollerShimuraTeich} (see Lemma \ref{AtLeastTwoParts} below), every direction decomposes into at least two cylinders.  Therefore, in Case (ii), the squares must be divided among at least two cylinders and each cylinder has circumference at most $d_{opt}/2$.  In Case (iii), there are $4d_{opt}$ squares, but each cylinder on the torus lifts to two cylinders above.  It suffices to assume that $|B| > 1$ because the single branch point case is covered by Case (ii).  Hence, there is a direction on the torus that splits into four cylinders.  In this case, the circumference of each cylinder is at most $d_{opt}$.  In the case that there is a $1$-cylinder direction on $E$, we get the bound $w_{ij} \leq 2d_{opt}$.

Using the trivial bound $h_{ij} \geq 1$, implies that there exists a direction such that
$$\frac{h_{ij}}{w_{ij}} \geq \frac{1}{d_{opt}},$$
and in every direction 
$$\frac{h_{ij}}{w_{ij}} \geq \frac{1}{2d_{opt}}.$$
However, there are at least two cylinders in every direction by \cite[Lem 5.3]{MollerShimuraTeich} (Lemma \ref{AtLeastTwoParts} below).  Hence, there exists a direction such that
$$\sum_{\substack{\text{horiz. cyls. s.t.} \\ S_i = \sqcup \text{cyl}_{ij}}} \frac{h_{ij}}{w_{ij}} \geq \frac{2}{d_{opt}},$$
and in every direction
$$\sum_{\substack{\text{horiz. cyls. s.t.} \\ S_i = \sqcup \text{cyl}_{ij}}} \frac{h_{ij}}{w_{ij}} \geq \frac{2}{2d_{opt}} = \frac{1}{d_{opt}}.$$
\end{proof}

\begin{theorem}
\label{CaseiiAndiiiExclusion}
There do not exist ST-curves satisfying Case (ii) or (iii) of Lemma \ref{MThreeCasesLem}.
\end{theorem}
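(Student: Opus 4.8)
The plan is to play the rank-one (equivalently, completely degenerate) condition against the Eskin--Kontsevich--Zorich sum formula of Theorem~\ref{EKZArithLyapExpSum}: the KZ spectrum being completely degenerate pins the sum of Lyapunov exponents to an exact value, while the circumference estimates of Lemma~\ref{SiegelVeechBound} force the Siegel--Veech term in that same formula to be strictly larger than what the pinned value allows, giving a contradiction.

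First I would determine the target value. By Proposition~\ref{EqToCDKZSpec} an ST-curve has completely degenerate KZ spectrum, so $\lambda_2 = \cdots = \lambda_g = 0$ while $\lambda_1 = 1$ after normalizing the top exponent, whence $\lambda_1 + \cdots + \lambda_g = 1$. Substituting this into Theorem~\ref{EKZArithLyapExpSum} and evaluating the stratum term via Lemma~\ref{KappaClass} as $1 - \tfrac{1}{d_{opt}}$, the formula collapses to the single equality
\[
\frac{1}{\text{card}(\text{SL}_2(\mathbb{Z})\cdot S_0)} \sum_{S_i \in \text{SL}_2(\mathbb{Z})\cdot S_0}\ \sum_{\substack{\text{horiz. cyls. s.t.}\\ S_i = \sqcup\,\text{cyl}_{ij}}} \frac{h_{ij}}{w_{ij}} = \frac{1}{d_{opt}},
\]
i.e.\ the orbit-average of the horizontal $h/w$-sums must equal \emph{exactly} $1/d_{opt}$.

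Next I would bound this average strictly from below. Every surface $S_i$ in the orbit is again a square-tiled surface lying in the same case of Lemma~\ref{MThreeCasesLem} as $S_0$, so the ``every direction'' bound of Lemma~\ref{SiegelVeechBound} applies to its horizontal decomposition, giving $\sum_j h_{ij}/w_{ij} \geq 1/d_{opt}$ for \emph{every} index $i$. For strictness I would exhibit a single orbit element whose horizontal contribution is at least $2/d_{opt}$: Lemma~\ref{SiegelVeechBound} furnishes a rational direction on $S_0$ with $h/w$-sum $\geq 2/d_{opt}$, and rotating that primitive direction to the horizontal by a suitable element of $\text{SL}_2(\mathbb{Z})$ produces a surface in the orbit that realizes this bound in its own horizontal direction. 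With one term at least $2/d_{opt}$ and all remaining terms at least $1/d_{opt}$, the average strictly exceeds $1/d_{opt}$, contradicting the displayed equality and thereby excluding Cases~(ii) and~(iii).

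The hard part will be justifying that the estimates of Lemma~\ref{SiegelVeechBound}, which are phrased for directions on a fixed surface, transfer faithfully to the horizontal decompositions of the individual surfaces summed over in the EKZ formula. The two points I would make explicit are that $\text{SL}_2(\mathbb{Z})$ preserves both the unit-square tiling and the combinatorial cylinder decomposition of a direction (in particular the number of cylinders), and that the circumference bound underlying Lemma~\ref{SiegelVeechBound} rests only on rotation-invariant data: the total square count ($d_{opt}$ in Case~(ii), $4d_{opt}$ in Case~(iii)), the lower bound of two cylinders per direction, and the fact that homologous cylinders share a common circumference by Proposition~\ref{AllCylsHom}. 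Once this invariance is recorded, both the uniform lower bound on every term and the realization of the stronger bound by one term are immediate, and the contradiction $\text{average} > 1/d_{opt}$ follows at once.
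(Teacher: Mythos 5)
Your proposal is correct and follows essentially the same route as the paper: pin the exponent sum to $1$ via Proposition \ref{EqToCDKZSpec}, evaluate the stratum term by Lemma \ref{KappaClass}, and use Lemma \ref{SiegelVeechBound} to show the orbit-averaged Siegel--Veech term strictly exceeds $1/d_{opt}$ because one orbit element contributes at least $2/d_{opt}$ while all others contribute at least $1/d_{opt}$. Your explicit justification that the bounds transfer to horizontal decompositions across the $\text{SL}_2(\mathbb{Z})$-orbit is a point the paper leaves implicit, but it is the same argument.
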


\begin{proof}
By Proposition \ref{EqToCDKZSpec}, a Teichm\"uller curve is an ST-curve if and only if the sum of its non-negative Lyapunov exponents equals $1$.  By \cite{EskinKontsevichZorich2} (Theorem \ref{EKZArithLyapExpSum} above), the sum of its non-negative Lyapunov exponents is exactly 
$$\lambda_1 + \cdots + \lambda_g = 
\frac{1}{12} \sum_{i=1}^n \frac{m_i(m_i+2)}{m_i+1} + \frac{1}{\text{card}(\text{SL}_2(\mathbb{Z}) \cdot S_0)} \sum_{S_i \in \text{SL}_2(\mathbb{Z}) \cdot S_0} \sum_{\substack{\text{horiz. cyls. s.t.} \\ S_i = \sqcup \text{cyl}_{ij}}} \frac{h_{ij}}{w_{ij}}.$$
Let $S'$ be the surface with a horizontal cylinder decomposition admitting the lower bound $2/d_{opt}$.  By Lemma \ref{SiegelVeechBound},
$$\frac{1}{\text{card}(\text{SL}_2(\mathbb{Z}) \cdot S_0)} \sum_{S_i \in \text{SL}_2(\mathbb{Z}) \cdot S_0} \sum_{\substack{\text{horiz. cyls. s.t.} \\ S_i = \sqcup \text{cyl}_{ij}}} \frac{h_{ij}}{w_{ij}}$$
$$ \geq \frac{1}{\text{card}(\text{SL}_2(\mathbb{Z}) \cdot S_0)} \left(\frac{2}{d_{opt}} + \sum_{\substack{S_i \in \text{SL}_2(\mathbb{Z})\cdot S_0 \\ S_i \not= S'}}  \frac{1}{d_{opt}}\right) > \frac{1}{d_{opt}}.$$
The last inequality, which is the crux of this argument follows from the fact that Teichm\"uller curves have finitely many cusps, each of which have finite width.  The average over the finite set will satisfy the strict inequality.

Returning to the formula for the sum of Lyapunov exponents and combining Lemma \ref{KappaClass} with this inequality yields
$$\lambda_1 + \cdots + \lambda_g =  1 - \frac{1}{d_{opt}} + \frac{1}{\text{card}(\text{SL}_2(\mathbb{Z}) \cdot S_0)} \sum_{S_i \in \text{SL}_2(\mathbb{Z}) \cdot S_0} \sum_{\substack{\text{horiz. cyls. s.t.} \\ S_i = \sqcup \text{cyl}_{ij}}} \frac{h_{ij}}{w_{ij}}$$
$$> 1 - \frac{1}{d_{opt}} + \frac{1}{d_{opt}} = 1.$$
Hence, the sum of the non-negative Lyapunov exponents of any such curve cannot be one, whence the curve is not an ST-curve.
\end{proof}

\section{The Boundary of an ST-Curve}
\label{BdSTCurveSect}

In this section, we apply the results of \cite{HuNortonGenVarFormsAbDiffs} to prove that an ST-curve cannot have a nodal surface in its boundary with three or more parts.  We prove this by showing that the period matrix in a neighborhood of a nodal surface with more than two parts cannot have a constant $(g-1) \times (g-1)$ minor along a Teichm\"uller geodesic.

We recall an elementary lemma  \cite[Lem 5.3]{MollerShimuraTeich} (cf. \cite[Lem. 6.1]{AulicinoCompDegKZ}) and restate its proof for the convenience of the reader.

\begin{lemma}
\label{AtLeastTwoParts}
Every nodal surface in the boundary of an ST-curve in genus at least two has at least two parts.
\end{lemma}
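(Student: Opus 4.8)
The plan is to argue by contradiction, reducing the hypothetical one-part degeneration to a single cylinder and then excluding that. Recall first that a boundary point of an ST-curve is a cusp of the Teichm\"uller curve, which by the Veech dichotomy and Lemma~\ref{STCurveArith} is obtained from a completely periodic direction by letting the moduli of its cylinders tend to infinity under $g_t$; pinching the core curves $\gamma_1,\dots,\gamma_k$ of these cylinders produces the nodes. The parts of the nodal surface are exactly the connected components of $X\setminus(\gamma_1\cup\cdots\cup\gamma_k)$, and I write $c$ for their number; the goal is $c\ge 2$.

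The first step is a purely homological reduction. I would form the dual graph $G$ whose vertices are the parts and whose edges are the pinched curves (equivalently the nodes); since $X$ is connected, so is $G$, hence $b_1(G)=k-c+1$. The key claim is that $\dim_{\bQ}\mathrm{span}\{[\gamma_1],\dots,[\gamma_k]\}=b_1(G)$. Indeed, a rational combination $\sum a_i\gamma_i$ is null-homologous precisely when it bounds a $2$-chain, and every $2$-chain is a combination $\sum_j b_jP_j$ of the parts $P_j$; thus the space of relations among the $[\gamma_i]$ is the row space of the (vertex-edge) incidence matrix of $G$, which for a connected graph has rank $c-1$. Hence the span has dimension $k-(c-1)=b_1(G)$. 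By Proposition~\ref{AllCylsHom} this span is one-dimensional, so $k-c+1=1$, i.e. $c=k$: the number of parts equals the number of cylinders. This already excludes a single separating cylinder, whose core would be null-homologous and make the span zero-dimensional, and it is consistent with the dual graph of Lemma~\ref{STDualGraphCycle} being a genuine ring.

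It remains to exclude the case $c=k=1$: a single cylinder with non-separating core $\gamma$, degenerating to one part $X'$ of genus $g-1$ carrying a differential with a single pair of simple poles. This is the step I expect to be the main obstacle, because—unlike the separating case—no soft homological or monodromy invariant obstructs it: writing $[\mathrm{Im}\,\omega]=h\cdot\mathrm{PD}[\gamma]$, one checks that the tautological plane $\langle[\mathrm{Re}\,\omega],[\mathrm{Im}\,\omega]\rangle$ and its flat symplectic complement are automatically invariant under the parabolic monodromy transvection $T_\gamma$. To finish I would therefore invoke the completely degenerate hypothesis in its variational form, Lemma~\ref{DerPerMat}: the derivative of the period matrix must have rank one at every point, and in particular as one approaches this cusp. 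I would examine the limiting second fundamental form $B(\theta,\eta)=\int_X(\bar\omega/\omega)\,\theta\eta$ restricted to the $g-1$ holomorphic differentials inherited from $X'$, and argue that the single pair of poles contributes a term forcing $\mathrm{rank}\,B\ge 2$, contradicting Lemma~\ref{DerPerMat}. (Alternatively, in M\"oller's framework one uses that the rank $g-1$ unitary sub-variation has quasi-unipotent, hence finite, local monodromy, so the transvection $T_\gamma$ acts trivially on it, and then traces through the constraints this places on the one-part configuration.) Establishing this exclusion cleanly—without the higher-order expansions of \cite{HuNortonGenVarFormsAbDiffs} reserved for the at-most-two-parts result of Theorem~\ref{AtMostTwoParts}—is the crux of the argument.
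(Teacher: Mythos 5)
Your first step (the homological reduction showing that the number of parts equals the number of cylinders, so that one part forces a single-cylinder presentation) is correct and in fact more carefully justified than in the paper, which passes over it in one sentence. But the second step --- excluding the one-cylinder case --- is where your proposal has a genuine gap: you explicitly do not carry it out, offering instead a sketch that one should ``examine the limiting second fundamental form'' or trace through monodromy constraints and somehow force the rank of the derivative of the period matrix to be at least two. None of that is executed, and it is also far heavier machinery than the situation requires. The idea you are missing is that Proposition~\ref{AllCylsHom} applies in \emph{every} periodic direction, not just the direction being pinched, and a Veech surface is periodic in a dense set of directions. The paper's proof uses exactly this: present the surface as a single cylinder, fix a saddle connection appearing on its top and bottom, and consider the family of closed trajectories crossing the cylinder through it; these lie in cylinders of some transverse periodic direction, and since homologous curves have equal period, Proposition~\ref{AllCylsHom} forces all such parallel closed trajectories to have equal length. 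That rigidity is incompatible with a one-cylinder surface of genus at least two and forces the surface to be a torus --- a contradiction. So your claim that ``no soft homological or monodromy invariant obstructs'' the one-cylinder case is misleading: a soft flat-geometric invariant (equality of circumferences of homologous cylinders in a second direction) does the job, and no appeal to Lemma~\ref{DerPerMat} or to any variational formula is needed.
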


\begin{proof}
By contradiction, assume that there were a nodal surface with one part.  Since all parallel cylinders are homologous, this implies that the surface arises by pinching the core curve of a $1$-cylinder surface.  However, if the entire surface can be presented as a single cylinder, then consider a saddle connection on the top and bottom of the cylinder and the family of closed trajectories passing through them.  Since all trajectories parallel to these closed curves are homologous by Proposition \ref{AllCylsHom}, they have equal length, which would imply that the surface is a torus.
\end{proof}

\subsection{Setup for the Variational Formulas}

Let $(X, \omega)$ be a vertically periodic translation surface generating an ST-curve with $n$ vertical cylinders.  Consider the Teichm\"uller ray $\{g_t \cdot (X, \omega) | t \in [0, \infty)\}$, which converges to a nodal Riemann surface carrying a meromorphic differential $(X', \omega')$ as $t$ tends to infinity.  By Lemma \ref{TCIntegralRelPinchParams}, any point on the ST-curve in a sufficiently small neighborhood of the cusp $(X', \omega')$ is described by plumbing $(X',\omega')$ with parameters $s_j=s^{r_j}$ for $s\in\mathbb C, r_j\in\mathbb Z_+$. We denote the result of plumbing $(X',\omega')$ with parameters $s_j=s^{r_j}$ as $X^s$.


By Lemma \ref{STDualGraphCycle}, the dual graph of a nodal surface in the boundary of an ST-curve is always a cycle.  Therefore, every part has two nodes.  Let the parts of the surface be $X_1, \ldots, X_n$ labeled so that $X_i$ shares a node with $X_{i-1}$ and $X_{i+1}$ and subscripts are taken modulo $n$.  Then remove the nodes of $X_i$ and denote the resulting punctures by $q_i^-$ and $q_i^+$.  In this way, $q_i^-$ and $q_{i-1}^+$ correspond to punctures resulting from the removal of a single node, as do $q_i^+$ and $q_{i+1}^-$.  By convention, we arrange the labels so that each part has exactly one $+$ and one $-$.

Restricting to $X_i$, let $z_i^{\pm}$ be non-intersecting local coordinate neighborhoods of $q_i^{\pm}$.  Let the loops $\gamma_i^{+}$ and $\gamma_i^{-}$ be the boundary of neighborhoods of size $\sqrt{|s|^{r_i}}$ and $\sqrt{|s|^{r_{i-1}}}$ around the nodes $q_i^{+}$ and $q_i^-$, respectively.  In the plumbing construction, the loops $\gamma_i^+$ and $\gamma_{i+1}^-$ are glued via the function $I_i^+(z_i^+):=\frac{s^{r_i}}{z_{i+1}^-}$ or equivalently $I_{i+1}^-(z_{i+1}^-):=\frac{s^{r_{i}}}{z_{i}^+}$.  Since the subscript on the function $I$ always matches the subscript on the variable, we will drop the subscript in the formulas below.


\

\noindent \textbf{Basis of $H_1$}: We choose a symplectic basis of $H_1(X^s, \bZ)$ adapted to the vertically periodic direction above as follows.  The surface $X^s$ is the result of gluing $X_i$ with small neighborhoods removed for plumbing.  Take a union of symplectic bases of $H_1(X_i, \bZ)$ on each part $X_i$, which we denote by $\{a_{j_i},b_{j_i}\}$.  This can be completed to a symplectic basis of $H_1(X^s, \bZ)$ by adding homology cycles $\{a_1,b_1\}$ such that $a_1$ is the homology class of any core curve of a vertical cylinder (because they are all homologous), and let $b_1$ be the homology class of a curve crossing the heights of every cylinder.  Clearly, the homology basis on each $X_i$ persists to every surface in a neighborhood of a cusp.  By choosing $a_1$ to always be the core curve of a cylinder and $b_1$ to be a transverse cycle, we get a basis of first homology on $X^s$ for all $s$.

\

\noindent \textbf{Basis of $H^{1,0}$}: We fix a normalized basis of differentials relative to this basis of $H_1(X, \bZ)$. We index the basis so that $\{\theta_{j_i}\}_j$ is the collection of differentials which span $H^{1,0}(X_i)$.  Clearly each differential is supported on exactly one part of the nodal surface because $\theta_{j_i}$ for each $j_i$ evaluates to $1$ on exactly one $a$-cycle, and therefore on all other parts, it evaluates to zero on every $a$-cycle.  We complete the basis by letting $\theta_1$ be the differential (up to complex constant) giving the flat structure on $X_s$ of the Teichm\"uller curve.

\subsection{The Jump Problem for ST-Curves}

Let $\Omega$ be any differential with at most simple poles at the nodes of $X_i$ with residues $\pm r$ satisfying the matching residue condition (namely the residues at each node are equal up to sign -- pairs of pre-images of nodes come with opposite signs).  Clearly any differential in the basis of $H^{1,0}$ above satisfies this condition.  Consider the following recursively defined sequence of differentials expressed \emph{locally} in the neighborhood of each node:




\begin{equation}
\label{XiiExpan}
\begin{array}{llcl}
k=0: & \xi_i^{\pm,(0)}(z_i^\pm) & := & \Omega(z_i^\pm)-\pm\frac{rdz_i^\pm}{z_i^{\pm}};\\
k>0: &  \xi_i^{\pm,(k)}(z_i^\pm) & := & \int_{z_i^+\in\gamma_i^{+}}\bK_i(z_i^{\pm}, z_i^+)\cdot \xi_{i+1}^{-,(k-1)}(\frac{s^{r_i}}{z_i^+}) \\
& & & \qquad + \, \int_{z_i^-\in\gamma_i^{-}}\bK_i(z_i^\pm,z_i^-)\cdot \xi_{i-1}^{+,(k-1)}(\frac{s^{r_{i-1}}}{z_i^-}).
\end{array}
\end{equation}

Note that 
$$\xi_{i+1}^{-,(k-1)}\left(\frac{s^{r_i}}{z_i^+}\right)=I^*\xi_{i+1}^{-,(k-1)}(z_{i+1}^-)$$
and similarly 
$$\xi_{i-1}^{+,(k-1)}\left(\frac{s^{r_{i-1}}}{z_i^-}\right)=I^*\xi_{i-1}^{+,(k-1)}(z_{i-1}^+)$$
are the pull-backs of the local differentials on the neighboring components at each node. The local differentials $\xi_i^{\pm,(k)}(z_i^\pm)$ depend parametrically on the plumbing parameters, both in terms of the loops $\gamma_i^{\pm}$ and the pull-back $I^*$.  Let 
$$\xi_i^{\pm} = \sum_{k=0}^\infty \xi_i^{\pm,(k)},$$
which exists because the series converges by \cite[Lem. 3.2]{HuNortonGenVarFormsAbDiffs}.  On $X_i$ we have therefore defined a local differential $\xi_i^+(z_i^+)$ in the neighborhood of $q_i^+$ and similarly $\xi_i^-(z_i^-)$ at $q_i^-$.

Denote by $\hat X_i^s$ the Riemann surface with boundary given by removing two open punctured discs from $X_i$ with boundaries given by $\gamma_i^{\pm}$.  Using the local differentials $\xi_i^{\pm}$ defined above, we write a global differential on $\hat X_i^s$


\begin{equation}
\label{EtaiSemiGeneral}
\eta_i(z):=\int_{z_i^-\in\gamma_i^-}K_i(z,z_i^-)\xi_{i-1}^+\left(\frac{s^{r_{i-1}}}{z_i^-}\right) + \int_{z_i^+\in\gamma_i^+}K_i(z,z_i^+)\xi_{i+1}^-\left(\frac{s^{r_i}}{z_i^+}\right).
\end{equation}

\begin{theorem}[\cite{HuNortonGenVarFormsAbDiffs}]
\label{HNThm3_3}
When the dual graph for $X'$ is a ring, the parametrically constructed differentials $\Omega+\eta_i$ define a smooth differential on the plumbed Riemann surface $X^{s}$, i.e. they match along seams used for gluing in the plumbing construction. 

In addition the differentials $\eta_i$ satisfy the $L^2$ bound from \cite[Thm. 3.3]{HuNortonGenVarFormsAbDiffs}, which implies that in every compact subset of $X_i-\{q_i^-,q_i^+\}$ the differential $\Omega+\eta_i$ converges to $\Omega_i$.
\end{theorem}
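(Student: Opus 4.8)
The plan is to treat Theorem \ref{HNThm3_3} as the ring-graph specialization of \cite[Thm. 3.3]{HuNortonGenVarFormsAbDiffs}: the construction (\ref{XiiExpan})--(\ref{EtaiSemiGeneral}) is exactly the iterative Cauchy-kernel solution of the parametric jump problem, and the two assertions (matching along seams, and convergence back to $\Omega_i$) are what that solution is designed to deliver. First I would record that each part $X_i$ of a ring has precisely two nodes, so the recursion only ever couples $X_i$ to its two neighbors $X_{i\pm 1}$; this is what lets the general multi-node construction collapse to the two-term recursion written in (\ref{XiiExpan}). The residue data fits the hypotheses because, the dual graph being a ring by Lemma \ref{STDualGraphCycle}, each part carries exactly one $+$ and one $-$ puncture, and the vanishing of the sum of residues of a meromorphic differential forces the residues at the two preimages of a node to be opposite.

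For the matching assertion, the mechanism I would make explicit is the Sokhotski--Plemelj jump formula for the Cauchy kernel: a Cauchy transform $z \mapsto \int_{\gamma}K_i(z,w)\psi(w)$ has boundary values on the two sides of $\gamma$ differing exactly by the density $\psi$. I would then compute the discrepancy $(\Omega_i+\eta_i) - I^*(\Omega_{i+1}+\eta_{i+1})$ along the seam $\gamma_i^+ = I(\gamma_{i+1}^-)$ and show it vanishes. This splits into two checks. The polar parts cancel at order $k=0$: since $I(z_i^+)=s/z_{i+1}^-$ gives $I^*\!\left(\frac{dz_{i+1}^-}{z_{i+1}^-}\right) = -\frac{dz_i^+}{z_i^+}$, and the residues at the two preimages are opposite, the term $\frac{r\,dz_i^+}{z_i^+}$ in $\Omega_i$ matches the pulled-back polar part of $\Omega_{i+1}$ exactly along the seam. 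For the holomorphic part, the boundary value of $\Omega_i+\eta_i$ at $q_i^+$ is $\frac{r\,dz_i^+}{z_i^+}+\xi_i^+$, and by Plemelj the jump that the $\gamma_i^+$-integral in $\eta_i$ produces across the seam is the density $I^*\xi_{i+1}^-$; the recursion (\ref{XiiExpan}) is arranged precisely so that the jump created by the degree-$(k-1)$ densities is the quantity fed into the degree-$k$ term on the adjacent part. Summing (\ref{XiiExpan}) over $k$ therefore telescopes all jumps to zero, so $\Omega_i+\eta_i$ and $I^*(\Omega_{i+1}+\eta_{i+1})$ agree along the seam and the pieces glue to a single-valued differential on $X^s$.

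For the convergence assertion I would invoke the $L^2$ estimate of \cite[Thm. 3.3]{HuNortonGenVarFormsAbDiffs} directly: it bounds $\eta_i$ in $L^2$ on $\hat X_i^s$ by a quantity that tends to $0$ as $s\to 0$ (equivalently, the series $\sum_k \xi_i^{\pm,(k)}$ converges geometrically, as in \cite[Lem. 3.2]{HuNortonGenVarFormsAbDiffs}). Since $\eta_i$ is holomorphic on $X_i-\{q_i^-,q_i^+\}$, the $L^2$ bound upgrades to uniform convergence on compact subsets via the Cauchy estimate for holomorphic differentials, so $\Omega_i+\eta_i \to \Omega_i$ uniformly on compacta away from the nodes.

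The main obstacle I anticipate is the bookkeeping in the jump computation: fixing the orientation conventions in the Plemelj formula on each contour $\gamma_i^\pm$, tracking the orientation-reversing and $s$-scaling effect of the pullback $I^*$, and checking that the local densities $\xi_i^\pm$ genuinely record the seam behavior of the global $\eta_i$. This last point requires accounting for the difference between the full kernel $K_i$ used in (\ref{EtaiSemiGeneral}) and the holomorphic kernel $\bK_i$ used in the recursion (\ref{XiiExpan}), whose discrepancy is exactly the Cauchy pole reproducing the prescribed polar behavior at the nodes. Once the sign and orientation conventions are pinned down, the cancellation is forced by the structure of (\ref{XiiExpan}) and no analytic input beyond the cited estimates is needed.
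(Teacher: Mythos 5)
The paper offers no proof of Theorem \ref{HNThm3_3} at all: as the label suggests, the statement is imported verbatim (specialized to a ring dual graph) from \cite[Thm.~3.3]{HuNortonGenVarFormsAbDiffs}, and the authors simply cite that result for both the matching along seams and the $L^2$ bound. Your proposal correctly identifies this as the right reduction, and the mechanism you sketch --- the polar parts cancelling because $I^*\bigl(\tfrac{dz_{i+1}^-}{z_{i+1}^-}\bigr)=-\tfrac{dz_i^+}{z_i^+}$ together with the opposite-residue condition, the Sokhotski--Plemelj jump of each Cauchy-transform term feeding the degree-$k$ density into the degree-$(k{+}1)$ term on the neighboring part so that all jumps telescope, and the geometric convergence of $\sum_k \xi_i^{\pm,(k)}$ from \cite[Lem.~3.2]{HuNortonGenVarFormsAbDiffs} upgrading via Cauchy estimates to uniform convergence on compacta --- is a faithful account of how the cited proof actually runs. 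The only caveat is that your argument ultimately still leans on the external estimates of \cite{HuNortonGenVarFormsAbDiffs} rather than replacing them, which is exactly the posture the paper itself takes; there is therefore no internal proof in this paper against which to register a divergence of method.
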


\subsection{Variational Formulas}

The particular jump problem relevant to the proof of Theorem~\ref{AtMostTwoParts} is the following: 
We solve the jump problem for the case where $X_i$ carries any differential in the basis of $H^{1,0}(X_i)$ described above and $X_j$ carries the zero differential, for $j \not= i$.  For simplicity we drop indices and denote the differential on $X_i$ by $\theta$.  We will call this \emph{the jump problem for a ring} with respect to $\theta$ on $X_i$.

It is important to note that the plumbing was done with respect to the flat structure, namely $\theta_1$ dual to the vanishing cycle. Solving the jump problem for a ring with respect to $\theta\in H^{1,0}(X_i)$ on $X_i$ is a non-trivial problem -- namely the jumps are non-zero because clearly $\theta$ does not match with the zero differential along the seam. Thus we are actually solving a jump problem with non-trivial jump when we investigate the $(g-1)\times (g-1)$ part of the period matrix under consideration.

The following corollary is a direct application of Theorem \ref{HNThm3_3} to the situation where all components have vanishing differentials except $X_i$.

\begin{corollary}
\label{DiffFormForRing}
The solution to the jump problem for a ring with respect to $\theta$ on $X_i$ has the following form on $\hat X_{i+1}^s$.  Let $\hat r_i = r_1 + \cdots + r_{i-1} + r_{i+1} + \cdots + r_n$.  If $r_i < \hat r_i$, then
\begin{equation}\label{smoothvHatGreater}
\begin{array}{ll}
\eta_{i+1}(z) = & (-1)s^{r_i}\omega_{i+1}(z, q_{i+1}^-)\theta(q_i^+) + O(|s|^{r_i+1}).
\end{array}
\end{equation}
If $\hat r_i < r_i$, then
\begin{equation}\label{smoothv2HatLess}
\begin{array}{ll}
\eta_{i+1}(z) = & (-1)^{n-1} s^{\hat r_i}\omega_{i+1}(z, q_{i+1}^+) \prod_{\ell \not= i, i+1} \omega_{\ell}(q_{\ell}^-, q_{\ell}^+) \theta(q_i^-) + O(|s|^{\hat r_i+1}).
\end{array}
\end{equation}
If $\hat r_i = r_i$, then
\begin{equation}\label{smoothv2HatEq}
\begin{array}{ll}
\eta_{i+1}(z) = & (-1)s^{r_i}\omega_{i+1}(z, q_{i+1}^-)\theta(q_i^+) \\
& + (-1)^{n-1} s^{\hat r_i}\omega_{i+1}(z, q_{i+1}^+) \prod_{\ell \not= i, i+1} \omega_{\ell}(q_{\ell}^-, q_{\ell}^+) \theta(q_i^-) + O(|s|^{r_i+1}).
\end{array}
\end{equation}
\end{corollary}

\begin{proof}
If $r_i<\hat r_i$, we would like to argue that the lowest order term in equation (\ref{EtaiSemiGeneral}) comes from the expression $\int_{z_{i+1}^-\in\gamma_{i+1}^-}K_{i+1}(z,z_{i+1}^-)\xi_{i}^+\left(\frac{s^{r_i}}{z_{i+1}^-}\right)$. First note that the lowest order term of $\xi_i^+(z_i^+)$ is $\xi_i^{+,(0)}(z_i^+)=\theta(z_i^+)$. The integral is evaluated by applying the Cauchy integral formula, and the result is $\int_{z_{i+1}^-\in\gamma_{i+1}^-}K_{i+1}(z,z_{i+1}^-)\xi_{i}^+\left(\frac{s^{r_i}}{z_{i+1}^-}\right)=(-1)s^{r_i}\omega_{i+1}(z, q_{i+1}^-)\theta(q_i^+) + O(|s|^{r_i+1})$, see \cite[Prop. 3.4]{HuNortonGenVarFormsAbDiffs} for details. 

In order to investigate the second term in equation equation (\ref{EtaiSemiGeneral}), namely $\int_{z_{i+1}^+\in\gamma_{i+1}^+}K_{i+1}(z,z_{i+1}^+)\xi_{i+2}^-\left(\frac{s^{r_{i+1}}}{z_{i+1}^+}\right)$, notice that $\xi_{i+2}^{-,(k)}(z_{i+1}^+)$ will contain terms that are the result of pulling back $\xi_i^{\pm,(0)}(z_i^\pm)=\theta(z_i^\pm)$ along the shortest paths connecting $X_i$ to $X_{i+2}$ in either the clockwise or counterclockwise direction in the dual graph. In the clockwise direction, one would pass through the node $q_i^+\sim q_{i+1}^-$ and then through the node $q_{i+1}^+\sim q_{i+2}^-$ which therefore contains at least a power of $s^{r_i+r_{i+1}}$ which is therefore of a higher order and can be disregarded when computing the lowest order term. Additionally $\xi_{i+2}^{-,(n-2)}(z_{i+1}^+)$ contains term which result from pulling back $\xi_i^{-,(0)}(z_i^-)$ all the way around the dual graph in the counter clockwise direction. Thus $\xi_{i+2}^-\left(\frac{s^{r_{i+1}}}{z_{i+1}^+}\right)$ has a lowest order term on the order of $s^{\hat r_i}$ which by assumption is of higher order than $s^{r_i}$.

Similarly if $\hat r_i<r_i$, then we evaluate
$$\int_{z_{i+1}^+\in\gamma_{i+1}^+}K_{i+1}(z,z_{i+1}^+)\xi_{i+2}^-\left(\frac{s^{r_{i+1}}}{z_{i+1}^+}\right)$$
$$=(-1)^{n-1} s^{\hat r_i}\omega_{i+1}(z, q_{i+1}^+) \prod_{\ell \not= i, i+1} \omega_{\ell}(q_{\ell}^-, q_{\ell}^+) \theta(q_i^-) + O(|s|^{\hat r_i+1})$$
exactly as before using the Cauchy integral formula, see\cite[Prop. 3.4]{HuNortonGenVarFormsAbDiffs}. The argument showing that $\int_{z_{i+1}^-\in\gamma_{i+1}^-}K_{i+1}(z,z_{i+1}^-)\xi_{i}^+\left(\frac{s^{r_i}}{z_{i+1}^-}\right)$ produces a term of higher order is exactly as above and follows because $\xi_i^{+,(0)}\left(\frac{s^{r_i}}{z_i^+}\right)$ has a lowest power of $s^{r_i}$.

Clearly it follows that if $\hat r_i = r_i$, the lowest order term comes from the sum of lowest order terms for each.

\end{proof}

\subsection{Bound on Parts}

In this section, we present the main technical result of the paper which is the exclusion of a nodal surface in the boundary of an ST-curve with three or more parts.  We first consider the period matrix and prove some preliminary results.  We follow the setup for the homology and space of Abelian differentials established above.  Consequentially, we focus on the $(g-1) \times (g-1)$ minor of the period matrix that is constant under deformations.  We further observe that this $(g-1) \times (g-1)$ minor of the period matrix naturally decomposes into blocks $\tau(i,j)$ of size $g_i \times g_j$, for $i, j \in \{1, \ldots, n\}$.  Each component is given by
$$\tau(i,j)_{k,\ell} = \int_{b_{\ell_j}} \theta_{k_i}(s),$$
where $ \theta_{k_i}(s)$ is the solution to the jump problem with respect to $\theta_{k_i}$ on $X_j$.  We also remark that $\tau(i,j)^{T} = \tau(j,i)$ by the symmetry of the period matrix.

\

\noindent \textbf{Notation Convention}: We recall that $1$-forms are not single-valued except at the zeros. As a result, evaluating a $1$-form at a point is nonsensical.  However, if we fix a local coordinate $z$ in a domain $U$, then we can express a differential in that local coordinate as $f(z)\,dz$. In this context it makes sense to evaluate the \emph{function} $f(z)$ at any point in $U$. In the setting of this paper, we have chosen local coordinates in the neighborhood of every node used for plumbing. Therefore one may evaluate a differential at a node in the sense explained above.  This is done throughout the literature and we will do this as well without further comment. Our goal is to show the non-vanishing of an expression involving differentials evaluated at points. Clearly this is independent of the choice of local coordinates.

\

\begin{lemma}
\label{PerMatCompCalc}
Given the setup above, if $r_i < \hat r_i$, then
\begin{equation}\label{tauHatGreater}
\begin{array}{ll}
\tau(i,i+1)_{k,\ell} = -s^{r_i} \theta_{\ell_{i+1}}(q_{i+1}^-)\theta_{k_i}(q_i^+) +  O(|s|^{r_i+1}).
\end{array}
\end{equation}
If $\hat r_i < r_i$, then
\begin{equation}\label{tauHatLess}
\begin{array}{ll}
\tau(i,i+1)_{k,\ell} = & (-1)^{n-1} s^{\hat r_i}\theta_{\ell_{i+1}}(q_{i+1}^+) \prod_{\ell \not= i, i+1} \omega_{\ell}(q_{\ell}^-, q_{\ell}^+) \theta(q_i^-) + O(|s|^{\hat r_i+1}).
\end{array}
\end{equation}
If $\hat r_i = r_i$, then
\begin{equation}\label{tauHatEq}
\begin{array}{ll}
\tau(i,i+1)_{k,\ell} = & (-1)s^{r_i}\theta_{\ell_{i+1}}(q_{i+1}^-)\theta(q_i^+) \\
& + (-1)^{n-1} s^{\hat r_i}\theta_{\ell_{i+1}}(q_{i+1}^+) \prod_{\ell \not= i, i+1} \omega_{\ell}(q_{\ell}^-, q_{\ell}^+) \theta(q_i^-) + O(|s|^{r_i+1}).
\end{array}
\end{equation}
\end{lemma}

\begin{proof}
The result follows from
$$\int_{b_{\ell}} \omega( \cdot, q) = \theta_{\ell}(q).$$
\end{proof}

\begin{theorem}
\label{AtMostTwoParts}
Every nodal surface in the boundary of an ST-curve has at most two parts.
\end{theorem}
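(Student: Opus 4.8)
The plan is to argue by contradiction: suppose the nodal surface $X' = X_1 \cup \cdots \cup X_n$ in the boundary has $n \geq 3$ parts, and pit the rank-one rigidity of Lemma \ref{DerPerMat} against the explicit plumbing expansion of the period matrix from Corollary \ref{DiffFormForRing}. By Lemma \ref{STDualGraphCycle} the dual graph is a ring, so I keep the adapted basis already fixed above: $\theta_1$ is the differential with simple poles and residues at the nodes that detects the degenerating necks, while $\theta_2,\ldots,\theta_g$ are the holomorphic differentials, each supported on a single part, whose indices $\{2,\ldots,g\}$ span the \emph{part block} of the period matrix.

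First I would pin down the consequence of being an ST-curve. By Lemma \ref{DerPerMat} and Proposition \ref{EqToCDKZSpec} the $g-1$ vanishing Lyapunov exponents produce a flat, i.e. locally constant, rank-$(g-1)$ subbundle of the Hodge bundle, equivalently the derivative of $\Pi$ is rank one at every point; in period-matrix terms this says that some $(g-1)\times(g-1)$ principal minor of $\Pi$ stays constant along the geodesic. Now the only entry of $\Pi(s)$ carrying the $\log s$ divergence of the pinching is $\tau_{11}$, since for $i \geq 2$ the differential $\theta_i$ is holomorphic at every node and the cycle $b_i$ crosses no neck. Hence the only principal minor that can possibly be constant is the one obtained by deleting the first row and column, namely the part block $(\tau_{ij})_{2\leq i,j\leq g}$. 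Flatness therefore forces this block to be independent of $s$, so every coefficient of its $s$-expansion vanishes; in particular its linear coefficient is zero: $\partial_s\tau_{ij}\big|_{s=0}=0$ for all $i,j\geq 2$.

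Next I would compute that linear coefficient from Corollary \ref{DiffFormForRing}. Fixing two adjacent parts $X_p$ and $X_q$ and indices $i$ attached to $X_p$, $j$ attached to $X_q$, the entry $\tau_{ij}=\int_{b_i}\theta_j$ acquires its leading $s$-dependence from the first step of the jump problem, i.e. from the spreading of $\theta_j$ onto the neighbour $X_p$. By Corollary \ref{DiffFormForRing} this spreading has leading term proportional to $s\,\omega_p(z,q_p)\,\Omega_j(q_q)$, where $q_p,q_q$ are the local coordinates on the two sides of the \emph{single} node shared by $X_p$ and $X_q$. Integrating over $b_i$ and using that the $b$-periods of the fundamental bi-differential recover the normalized holomorphic differentials, the coefficient of $s$ in $\tau_{ij}$ equals, up to a nonzero constant, $\nu_i(q_p)\,\Omega_j(q_q)$, where $\nu_i$ is the normalized differential on $X_p$ dual to $b_i$. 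As $i$ and $j$ vary, the $\nu_i$ and the $\Omega_j$ each run over a basis of holomorphic differentials of the corresponding part, both of which have positive genus by Corollary \ref{PartsPosGenus}.

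The contradiction then falls out. Setting $\nu_i(q_p)\,\Omega_j(q_q)=0$ for all $i,j$ forces either every $\nu_i$ to vanish at $q_p$ or every $\Omega_j$ to vanish at $q_q$, i.e. a full basis of holomorphic differentials on a positive-genus surface vanishing at a single point, which Lemma \ref{NonZeroAtPt} forbids. The step I expect to be the crux, and the only place where $n\geq 3$ genuinely enters, is precisely this single-node computation: when $n\geq 3$ consecutive parts meet in exactly one node, so the linear coefficient is a single rank-one product of node-values that cannot vanish, whereas when $n=2$ the two parts share \emph{two} nodes, the linear coefficient becomes a sum of two such products which are free to cancel (as they must for the genuine two-part degenerations of the Eierlegende Wollmilchsau and Ornithorynque). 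The technical heart of the argument is thus to justify that the constant minor is forced to be the part block and to extract the single-node linear term cleanly from Corollary \ref{DiffFormForRing} -- exactly the higher-order variational information supplied by \cite{HuNortonGenVarFormsAbDiffs}.
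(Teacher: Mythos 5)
Your proposal is correct and follows essentially the same route as the paper: contradiction via Lemma \ref{DerPerMat}, the first-order term of the jump-problem expansion from Corollary \ref{DiffFormForRing} giving a linear coefficient that is a single product of node-values of normalized differentials on adjacent parts, and Lemma \ref{NonZeroAtPt} to make that product non-zero. You also correctly isolate the role of $n\geq 3$ (one shared node versus two, so no cancellation), which is exactly the point the paper emphasizes in the remark following its proof.
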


\begin{proof}
Recall that the dual graph is a cycle (or ring) by Lemma \ref{STDualGraphCycle}. 
Since the dual graph is a cycle, the number of parts equals the number of cylinders in a periodic direction.  Recall that by Lemma \ref{DerPerMat}, for an ST-curve, there exists a $(g-1) \times (g-1)$ minor of the period matrix with zero derivative along the Teichm\"uller ray defined above.  


We follow the conventions above. By Lemma \ref{PerMatCompCalc}, in each block $\tau(i, i+1)$ there exists $\ell_0$ and $k_0$ such that $\theta_{\ell_0}(q_{i+1}^-) \not= 0$ and $\theta_{k_0}(q_{i}^+) \not= 0$.  Since the two terms must cancel because the coefficient of every non-constant term is zero by assumption, we have $\hat r_i \leq r_i$ for all $i$.  This implies $\sum_j r_j \leq 2r_i$.  Since this is true for each $i$, we can sum each inequality over all $n$ values of $i$ to get
$$n\sum r_i \leq 2 \sum r_i.$$
This implies $n \leq 2$ and Lemma \ref{AtLeastTwoParts} implies $n \geq 2$.
\end{proof}

\begin{remark}
One would hope to exploit higher order terms of the expansion to draw more conclusions about the geometry of the cusps of ST-curves in the only remaining case, namely $n=2$ and $r_1=r_2$.  Unfortunately, this proves inconclusive.  We simplified the expression for the $s^{2r_1}$ coefficient in the period matrix expansion near a nodal curve whose dual graph is a ring with two components and have found some geometric characterizations for the locations of the nodes. The translation of this information into flat geometry is unclear as well as whether this information could help handle the possible remaining cases for an ST-curve in genus $g\geq 5$. 
This necessitates the reliance on a computer in Section~\ref{TwoTorPtSect} to prove Theorem~\ref{MainThm}.
\end{remark}

\subsection{Cylinder Decompositions}

Recall that after normalizing the torus to a square, the rational slopes classify the periodic directions.  Furthermore, we continue to follow the convention that a torus with marked points splits into cylinders in the periodic directions where the boundaries of the cylinders are determined by marked points.

\begin{lemma}
\label{Torus2Torsion}
Let $E$ be a torus with $s > 2$ marked points.  If $E$ splits into exactly two cylinders in every direction, then the marked points are contained in the set of $2$-torsion points.  In particular, $s \in \{3,4\}$.
\end{lemma}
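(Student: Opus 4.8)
The plan is to work on the normalized torus $E = \bR^2/\bZ^2$, where by the stated convention the periodic directions correspond to coprime integer vectors $(p,q)$. The starting observation is that in the direction of such a vector the leaves of the foliation are the level sets of the linear functional $f_{p,q}(x,y) := qx - py \pmod 1$, since $f_{p,q}$ is constant along $(p,q)$ and maps $\bZ^2$ onto $\bR/\bZ$ (as $\gcd(p,q)=1$). Consequently the number of cylinders in this direction equals the number of distinct values taken by $f_{p,q}$ on the set of marked points, because cutting $E$ along the finitely many leaves through marked points produces exactly one cylinder between each pair of cyclically consecutive such leaves. Thus the hypothesis becomes the purely arithmetic statement: for every coprime $(p,q)$, the marked points realize exactly two values of $f_{p,q} \bmod 1$.

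First I would extract the bound on $s$ together with the grid structure. Applying the observation to the horizontal direction $(1,0)$ shows the marked points take exactly two distinct $y$-coordinates, and to the vertical direction $(0,1)$ that they take exactly two distinct $x$-coordinates. Hence every marked point lies in a $2\times 2$ grid $\{x_1,x_2\}\times\{y_1,y_2\}$, forcing $s \le 4$; together with $s>2$ this gives $s\in\{3,4\}$. Since the conclusion concerns only the pairwise differences of the marked points (equivalently, their positions relative to an origin placed at one of them), I may translate freely, so I normalize the grid to $\{0,a\}\times\{0,b\}$ with $a,b\in(0,1)$.

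Next I would pin down $a$ and $b$ using diagonal directions. If $s=4$ (all four grid points marked), the direction $(1,1)$ gives the four values $\{0,\,a,\,-b,\,a-b\} = \{0,a\}+\{0,-b\} \pmod 1$; a short case check shows that a sumset $\{0,u\}+\{0,w\}$ with $u,w\not\equiv 0$ has exactly two elements only when $u\equiv w\equiv \tfrac12$, forcing $a\equiv b\equiv \tfrac12$. If $s=3$, the three marked points form an ``L'' and I place its corner (the point sharing a coordinate with each of the other two) at the origin, so the marked points are $(0,0),(a,0),(0,b)$; the direction $(1,1)$ then yields the values $\{0,a,-b\}$, which—both being nonzero mod $1$—collapse to two only if $a+b\equiv 0$, while the direction $(1,-1)$ yields $\{0,-a,-b\}$, collapsing only if $a\equiv b$. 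Together these give $a\equiv b\equiv\tfrac12$. In either case every marked point lies in $\{0,\tfrac12\}\times\{0,\tfrac12\}=E[2]$, i.e.\ the marked points are $2$-torsion.

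The only real subtlety, and the step I would treat most carefully, is the $s=3$ case: two distinct periodic directions are genuinely needed there (a single diagonal leaves the one-parameter family $b=1-a$), and one must invoke the symmetry of the configuration to justify reducing to the normal form with the L-corner at the origin. The elementary collapse-of-the-sumset computation for $s=4$ and the two linear congruences for $s=3$ are then routine once the foliation-to-functional dictionary is in place.
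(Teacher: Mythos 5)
Your argument is correct. Its first half --- using the horizontal and vertical cylinder decompositions to force the marked points into a $2\times 2$ grid $\{x_1,x_2\}\times\{y_1,y_2\}$, hence $s\le 4$ and so $s\in\{3,4\}$ --- is exactly the paper's opening move, phrased there as: all marked points lie on the two vertical lines through $p_1,p_2$ and on the two horizontal lines through $p_1,p_3$. Where you diverge is the final step. The paper pins down the grid by invoking the two unipotent generators of $\text{SL}_2(\bZ)$, which preserve $E$ and fix the marked points on the coordinate axes, and asserting that the four-point grid must be invariant under them; only the $2$-torsion has that property. You instead apply the two-cylinder hypothesis directly in the diagonal directions $(1,\pm 1)$ through the functional $qx-py$ and solve the resulting congruences ($u\equiv w\equiv\tfrac12$ from the sumset collapse when $s=4$; $a+b\equiv 0$ and $a\equiv b$ when $s=3$). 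The two routes are close relatives --- shearing by a unipotent and then counting vertical cylinders is the same as counting cylinders in a diagonal direction of the original surface --- but your version is more explicit and, arguably, fills a gap: the paper's claim that the marked set itself (rather than merely its two-cylinder property) is invariant under the shears is stated without justification, whereas your congruence computation needs no such invariance. You are also right to flag the $s=3$ case as the delicate one requiring both diagonals; your reduction to the L-shaped normal form with the corner at the origin is legitimate since, as in the paper's own proof, the statement is read with the origin placed at a marked point.
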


\begin{proof}
In this proof, we normalize the torus to a unit square and regard it as the unit square in $\bR^2$ for the purpose of discussing horizontal and vertical lines.  Without loss of generality, let one of the points $p_1$ lie at the origin and a second point $p_2$ lie on the horizontal line through the origin.  If a third point lies on the horizontal line, then the vertical direction splits into at least three cylinders.  Therefore, all remaining marked points lie on vertical lines passing through $p_1$ and $p_2$.  The two cylinder assumption implies that there exists a third marked point $p_3$.  Otherwise, the horizontal direction would consist of a single cylinder.  Let $p_3$ be a marked point lying above $p_1$ or $p_2$.  Then all marked points must be contained in the horizontal lines through $p_1$ and $p_3$.  Otherwise, there would be more than two cylinders in the horizontal direction.  However, the intersection of two horizontal and two vertical lines is exactly a set of four points $B'$.  Hence, the set of $s$ marked points must be a subset of $B'$ consisting of at least three points.  Since the two unipotent matrices with a $1$ on the off-diagonal preserve $E$ and fix points lying on a horizontal or vertical line, the set $B'$ must be a set of four points that is invariant under the action by these matrices.  Only the $2$-torsion points of the torus have this property.
\end{proof}

\begin{proposition}
\label{STCurve2Torsion}
Let $M \in \cH(\kappa)$ be a translation surface generating an ST-curve.  Consider the optimal covering $\pi_{opt}: M \rightarrow E$ from Corollary \ref{MCor515}.  Then $\pi_{opt}$ is a covering of the torus $E$ branched over three or four of the $2$-torsion points.
\end{proposition}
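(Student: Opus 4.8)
The plan is to combine the structural results already in hand—that every cusp has exactly two parts, and that only Case (i) of Lemma \ref{MThreeCasesLem} survives—to force the base torus $E$ to split into exactly two cylinders in every periodic direction, and then to feed this into Lemma \ref{Torus2Torsion}.

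First I would pin down the cylinder count on the covering surface $M = (X,\omega)$. By Lemma \ref{AtLeastTwoParts} every nodal surface in the boundary has at least two parts, and by Theorem \ref{AtMostTwoParts} it has at most two; hence exactly two. By Lemma \ref{STDualGraphCycle} the dual graph is a cycle, and a cycle on two vertices is a banana, carrying exactly two edges, i.e.\ two nodes. Since each cylinder in a chosen periodic direction pinches to a node and, as noted in the proof of Theorem \ref{AtMostTwoParts}, the number of parts equals the number of cylinders in that direction, $M$ decomposes into exactly two cylinders there. Because $M$ is an arithmetic Veech surface, every rational-slope direction is completely periodic and yields a cusp of the ST-curve, to which the same analysis applies; thus $M$ has exactly two cylinders in \emph{every} periodic direction.

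Next I would transport this count to $E$. By Theorem \ref{CaseiiAndiiiExclusion}, Cases (ii) and (iii) of Lemma \ref{MThreeCasesLem} are impossible, so $\pi_{opt}$ falls under Case (i): the preimage of each cylinder of $E$ is a single cylinder of $X$. As $\pi_{opt}$ is a translation covering, it preserves directions and carries the cylinder decomposition of $E$ in a rational direction to that of $M$ in the same direction; Case (i) makes this a bijection on cylinders. Consequently $E$ decomposes into exactly two cylinders in every periodic direction.

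Finally I would count branch points and invoke Lemma \ref{Torus2Torsion}. The marked points of $E$ are precisely the branch locus $B$, consisting of rational points of the square torus. If $|B|\leq 1$, then $E$ has a single cylinder in every direction; if $|B|=2$, then the rational direction joining the two points gives a single cylinder—each case contradicting the two-cylinder conclusion. Hence $|B|\geq 3$, so the hypothesis $s>2$ of Lemma \ref{Torus2Torsion} is met, forcing $B$ to lie in the $2$-torsion with $|B|\in\{3,4\}$, which is exactly the assertion. The main obstacle is the faithful transfer of the cylinder count from $X$ to $E$: it depends crucially on first excluding Cases (ii) and (iii) so that we are genuinely in Case (i), and on using that $\pi_{opt}$ is a translation cover so that directions and their cylinder decompositions correspond; the branch-point bookkeeping in the last step is then routine.
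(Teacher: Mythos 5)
Your proposal is correct and follows essentially the same route as the paper: exclude Cases (ii) and (iii) to land in Case (i), use the two-part bound on cusps to force exactly two cylinders in every periodic direction of $E$, verify $|B|\geq 3$ to meet the hypothesis of Lemma \ref{Torus2Torsion}, and conclude. Your explicit handling of the $|B|\leq 2$ cases is a slightly more careful write-up of a step the paper passes over quickly, but the argument is the same.
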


\begin{proof}
By Lemma \ref{MThreeCasesLem}, $M$ satisfies one of three cases.  By Theorem \ref{CaseiiAndiiiExclusion}, $M$ necessarily satisfies Case (i).  Therefore, every cylinder on the torus lifts to a unique cylinder above.  Since there are no $1$-cylinder directions by Lemma \ref{AtLeastTwoParts}, branching of $\pi_{opt}$ must occur over at least three of the $2$-torsion points.  On the other hand, there are at most two cylinders in every direction because the degenerate surfaces in the boundary of the Teichm\"uller curve of $M$ have exactly two parts by Theorem \ref{AtMostTwoParts}.  (Recall that an $n$-cylinder direction gives rise to a degenerate surface with $n$ parts under the Teichm\"uller flow.)  Again, since every cylinder on the torus lifts to a unique cylinder above, the torus $E$ must split into two cylinders in every direction in which case Lemma \ref{Torus2Torsion} implies that all branching occurs over the set of $2$-torsion points.
\end{proof}

\section{Branching Over $2$-Torsion Points}
\label{TwoTorPtSect}

The algorithm presented below will perform a brute force search to establish the following theorem.  Theorem \ref{MainThm} will follow from Theorem \ref{NonHomCylsThm} and Proposition \ref{STCurve2Torsion}.  

\begin{theorem}
\label{NonHomCylsThm}
Let $\cH(\kappa)$ be a stratum from Table \ref{StrataListTable}.  Given a degree $d_{opt}$ covering $M \in \cH(\kappa)$ of a torus branched over three or four of the $2$-torsion points, where $M$ subject to the constraints of Lemma \ref{MLem5_18} if $d_{opt}$ is even, then $M$ decomposes into exactly two homologous cylinders in every directions only if $M$ is the Eierlegende Wollmilchsau or the Ornithorynque.
\end{theorem}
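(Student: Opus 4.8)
The plan is to establish the statement by an exhaustive but carefully organized search over the finite family singled out by the hypotheses, since no purely theoretical obstruction seems to remain after the structural reductions already in place. By Corollary \ref{MCor515} the stratum and degree $d_{opt}$ are fixed, and by Proposition \ref{STCurve2Torsion} the branch locus may be taken to be three or four of the $2$-torsion points of $E$. Normalizing $E$ to the torus tiled by four unit squares, so that its $2$-torsion points are square corners, shows that $M$ is tiled by exactly $4 d_{opt}$ unit squares, at most $144$ in the case $d_{opt} = 36$. Rather than range over the $(4 d_{opt})!$-sized set of horizontal/vertical permutation pairs, I would encode each candidate by the data described in the introduction: a cylinder diagram in the horizontal direction recording how the saddle connections bounding the two cylinders are glued, together with the lengths of those saddle connections and the twists of the two cylinders. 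Because the branch points are $2$-torsion, the two lines of any fixed rational slope through the branch locus already cut $E$ into exactly two cylinders, and each lifts to a single cylinder by Case (i) of Lemma \ref{MThreeCasesLem}; hence $M$ automatically has exactly two cylinders in every direction, and the entire content of the hypothesis lies in whether these two cylinders are homologous.

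With this reduction the engine of the proof is a homology test: for a candidate $M$ and a rational direction, read off from the combinatorial data the homology classes of the two cylinder core curves $c_1, c_2$ and decide whether $[c_1] = [c_2]$ in $H_1(M;\bZ)$, equivalently whether the two core curves together separate $M$. A candidate is excluded the instant one direction is found where this fails. It suffices to probe directions of bounded complexity, because $M$ generates a Teichm\"uller curve with finitely many cusps; in practice I would test the horizontal, vertical, and small-slope directions first and escalate only when necessary. The two genuine examples, the Eierlegende Wollmilchsau and the Ornithorynque, survive every test since they are known to generate ST-curves and hence have all cylinders homologous in every direction by Proposition \ref{AllCylsHom}; the search must verify that every remaining candidate fails in at least one probed direction. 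To shrink the list further I would quotient by the obvious equivalences (relabeling of squares and the affine automorphisms of $M$) before testing.

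The decisive issue is feasibility, since even in these coordinates the largest case holds on the order of $10^{16}$ surfaces. The idea I would exploit is that the coordinates are specified incrementally --- cylinder diagram, then saddle-connection lengths, then twists --- and that a partial specification frequently already forces a homological obstruction in some auxiliary direction, independently of the parameters not yet chosen; when it does, the whole family sharing that partial datum is discarded by a single arithmetic check. Concretely I would fix a direction transverse to the horizontal one, express the homology difference $[c_1] - [c_2]$ of its two cylinders as a function of the still-free lengths and twists (these constrained to the lattice dictated by the torus covering, and, when $d_{opt}$ is even, further restricted by Lemma \ref{MLem5_18}), and prune whenever that difference can be certified nonzero on the entire admissible range. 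Ordering the enumeration so that such inadmissibility certificates surface as early as possible is exactly what turns a hopeless search into a tractable one.

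The main obstacle is thus computational rather than conceptual, and I expect two points to require care. First, one must justify that a bounded set of test directions suffices to exclude every non-example; this rests on the finiteness of cusps together with the rigidity imposed by requiring the homology condition to hold simultaneously in several independent directions. Second, one must certify that the lattice of admissible lengths and twists is precisely the one forced by the branching over $2$-torsion points and by Lemma \ref{MLem5_18}, so that the pruning neither skips an admissible surface nor discards a valid one. Once completeness and correctness of the enumeration and its pruning predicates are secured, the residual finite computation isolates exactly the Eierlegende Wollmilchsau and the Ornithorynque.
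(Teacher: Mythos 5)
Your overall strategy coincides with the paper's: enumerate the finite family in the coordinates (two $1$-cylinder diagrams, ordered partitions of $2d_{opt}$ recording saddle connection lengths, and a twist parameter), and discard entire families from partial data. But there is a genuine gap where you assert that $M$ ``automatically has exactly two cylinders in every direction'' because each cylinder of $E$ ``lifts to a single cylinder by Case (i) of Lemma \ref{MThreeCasesLem}.'' That lemma describes surfaces already known to generate ST-curves; for an arbitrary degree-$d_{opt}$ cover branched over the $2$-torsion points, a cylinder of $E$ lifts to as many cylinders as there are cycles in the monodromy permutation of its core curve, and nothing in the branching data forces this to be a single $2d_{opt}$-cycle in every direction. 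Far from being automatic, this condition is what does most of the excluding: the paper's Steps 6 and 7 compute the vertical and slope-$1/k$ monodromies $h^k v$ and reject every candidate for which they are not a pair of $2d_{opt}$-cycles, and its cheapest filters --- the parity constraint of Lemma \ref{FourOddSC}, the Window Lemma \ref{WindowLem}, and the resulting inequality of Corollary \ref{s0t0Inequality} on $t_{start}$ --- are precisely obstructions to full-circumference lifting (a length-two closed trajectory crossing each core curve once would create a transverse cylinder of circumference $2 < 2d_{opt}$). If you take the cylinder count for granted and only test whether $[c_1]=[c_2]$ in $H_1(M;\bZ)$, you are testing the wrong predicate on most candidates, and you also lose the concrete arithmetic filters (parities of partition entries, the window inequality) that make the worst case, $\cH(2,1^6)$ with all extra branching over one point, computationally feasible.

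A second, smaller gap is on the positive side. Finiteness of cusps does not by itself bound the set of directions one must probe, and for a candidate that survives all probes you must still certify that the property holds in \emph{every} direction. The paper closes this by reducing the survivors modulo isomorphism to the Eierlegende Wollmilchsau and the Ornithorynque and by computing the sum of Lyapunov exponents via the Eskin--Kontsevich--Zorich formula (Theorem \ref{EKZArithLyapExpSum}), which together with Forni's criterion rigorously verifies the surviving examples. Your proposal should include one of these final verifications rather than relying on the known examples ``surviving every test.''
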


In light of Proposition \ref{STCurve2Torsion}, it is clear that we have reduced the problem to a finite number of possibilities.  Indeed, a torus with $2$-torsion points marked can be realized by four squares.  Since the degree $d_{opt}$ over this torus is explicitly given in Table \ref{StrataListTable}, the reader can observe that an ST-curve is necessarily generated by a square-tiled surface consisting of no more than $36(4) = 144$ squares, which occurs in the stratum $\cH(2,1^6)$.  However, as mentioned in the introduction, any kind of complete understanding or encyclopedia of square-tiled surfaces with $144$ squares is well beyond the scope of modern computation.  Nevertheless, the geometry of a square-tiled surface generating an ST-curve is so restricted that this problem can be solved.

Typically, square-tiled surfaces are given by a pair of permutations.  Squares are labeled by $\{1, \ldots, n\}$.  A permutation in $S_n$ describes the gluings of all squares horizontally and another permutation in $S_n$ describes the gluings of all squares vertically.  This is a completely natural way to describe the resulting square-tiled surface.  However, it is computationally unworkable for the problem here.

Instead we construct another natural system of coordinates for square-tiled surfaces that permits the extraction of meaningful information in the absence of the full coordinates that would allow one to determine the specific surface.  The upshot is that by excluding partial information, we can eliminate the existence of large families of surfaces that all share that common coordinate with a single computation.  Essentially, we record the identifications of all saddle connections, partitions of cylinder circumferences in order to provide the lengths of those saddle connections, and twists of the cylinders.  Below we will describe our coordinate system, enumerate all of the cases to address, and present a pseudo-algorithm that works as a sequence of filters of increasing computational complexity that either terminates in the empty list or produces one of the known examples.  This will complete the proof of Theorem \ref{NonHomCylsThm}.

First we recall two more results of \cite{MollerShimuraTeich} and state a simple corollary that excludes one of the strata in the table.  The following lemma is not stated as a lemma in \cite{MollerShimuraTeich}, but its complete proof can be found before the statement of \cite[Lem. 5.18]{MollerShimuraTeich}.  We follow the terminology of \cite{MollerShimuraTeich}.  Let $\phi_{opt}: \pi_1(E, P) \rightarrow S_{d_{opt}}$ be the monodromy representation with respect to the covering map $\pi_{opt}$, and let $\varepsilon: S_n \rightarrow S_n/A_n$ be the usual sign of a permutation.  A branch point is \emph{odd} (resp. \emph{even}) if the monodromy of the branch point is not in the kernel of $\varepsilon$ (resp. is in the kernel of $\varepsilon$).

\begin{lemma}[M\"oller]
\label{OddBPExistsdoptEven}
If $d_{opt}$ is even and if Case (i) holds, then there exists an odd branch point.
\end{lemma}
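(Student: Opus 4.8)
The plan is to track only the \emph{parity} of the monodromy. Let $\phi_{opt}\colon\pi_1(E\setminus B,P)\to S_{d_{opt}}$ be the monodromy representation and set $s:=\varepsilon\circ\phi_{opt}$, a homomorphism to $\bZ/2\bZ$. Because the target is abelian, $s$ factors through $H_1(E\setminus B;\bZ/2\bZ)$. The standard presentation of $\pi_1(E\setminus B)$ has generators $\alpha,\beta$ (the two loops of the torus) together with a small loop $\gamma_b$ about each branch point $b\in B$, subject to the single relation $[\alpha,\beta]\prod_{b}\gamma_b=1$; hence $s$ is completely determined by the values $s(\alpha),s(\beta)$ and the $s(\gamma_b)$. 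The relation forces $\sum_b s(\gamma_b)=0$, since $\prod_b\phi_{opt}(\gamma_b)=[\phi_{opt}(\alpha),\phi_{opt}(\beta)]^{-1}$ is a commutator and therefore even. A branch point $b$ is odd precisely when $s(\gamma_b)=1$, so the goal is to show that the $s(\gamma_b)$ cannot all vanish.

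The key input from Case (i) concerns core curves. Fix a rational direction $(p,q)$; since $M$ is square-tiled it is completely periodic, so $E$ decomposes into cylinders whose boundaries pass through the marked points $B$. Let $C\subset E$ be such a cylinder with core curve $c$. As $c$ is freely homotopic in $E$ to the $(p,q)$-curve and avoids $B$, its class satisfies $[c]\equiv p[\alpha]+q[\beta]\pmod{\langle[\gamma_b]\rangle_b}$. Over the open cylinder $C$ the covering $\pi_{opt}$ is unbranched of degree $d_{opt}$, so $\pi_{opt}^{-1}(C)$ is a disjoint union of cylinders, one for each cycle of the permutation $\phi_{opt}(c)$. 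Case (i) asserts this union is a single cylinder, forcing $\phi_{opt}(c)$ to be a single $d_{opt}$-cycle. A $d_{opt}$-cycle has sign $(-1)^{d_{opt}-1}$, which is odd when $d_{opt}$ is even; hence $s(c)=1$ for the core curve of every cylinder in every rational direction.

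Now I argue by contradiction. Suppose every branch point is even, i.e. $s(\gamma_b)=0$ for all $b$. Then the congruence for $[c]$ collapses to $s(c)=p\,s(\alpha)+q\,s(\beta)$. Evaluating this in the three rational directions $(1,0)$, $(0,1)$, $(1,1)$ — each realized by at least one cylinder on $E$ — and combining with $s(c)=1$ from the previous step gives simultaneously $s(\alpha)=1$, $s(\beta)=1$, and $s(\alpha)+s(\beta)=1$. The first two equations yield $s(\alpha)+s(\beta)=0$, contradicting the third. Hence not all branch points can be even, so $\pi_{opt}$ has at least one odd branch point.

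The only genuinely delicate points are the two invoked above: that the number of cylinders of $M$ lying over $C$ equals the cycle count of the core monodromy $\phi_{opt}(c)$, and that the class of a $(p,q)$-core reduces to $p[\alpha]+q[\beta]$ modulo the branch loops. Both are standard once basepoints and orientations are fixed compatibly, and I expect the main obstacle to be purely organizational — choosing the marking so that the lattice directions $(1,0),(0,1),(1,1)$ correspond to $[\alpha],[\beta],[\alpha]+[\beta]$, and confirming that Case (i) applies verbatim in each of these three directions (it does, since $M$ is completely periodic in every rational direction). Given those identifications, the entire content of the lemma is the inconsistency over $\bZ/2\bZ$ of the linear system produced by the horizontal, vertical, and diagonal cylinders.
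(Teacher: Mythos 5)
Your proof is correct and follows essentially the same route as the paper: Case (i) forces each core-curve monodromy to be a single $d_{opt}$-cycle, hence an odd permutation when $d_{opt}$ is even, while the assumption that all branch points are even makes the sign a homomorphism factoring through $H_1(E;\bZ/2\bZ)$, and the three directions $(1,0),(0,1),(1,1)$ then yield an inconsistent linear system over $\bZ/2\bZ$. The paper compresses this into one sentence ("cylinders with every slope that is given by a product of an even number of cycles must also be even"), but the underlying parity argument is identical to yours.
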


\begin{proof}
If $d_{opt}$ is even, then the Case (i) assumption implies that the monodromy of the core curves of the cylinders lift to cycles in $S_{d_{opt}}$ that do not lie in the kernel of $\varepsilon$.  By contradiction, if every branch point were even, then the monodromy of every branch point would lie in the kernel of $\varepsilon$.  Therefore, cylinders with every slope that is given by a product of an even number of cycles must also be even, and this contradicts Lemma \ref{MThreeCasesLem}.
\end{proof}

Next we recall \cite[Lem. 5.18]{MollerShimuraTeich} without proof for the convenience of the reader.  The proof is effectively identical to that of Proposition \ref{STCurve2Torsion}.

\begin{lemma}[M\"oller]
\label{MLem5_18}
If $d_{opt}$ is even and if Case (i) of Lemma 5.17 holds, then it is possible to normalize $E$ by translating the origin such that the set of odd branch points is precisely the set of $2$-torsion points of $E$.
\end{lemma}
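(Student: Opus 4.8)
The plan is to run the argument of Proposition \ref{STCurve2Torsion} while keeping track, at each cylinder, of the parity of the monodromy through the sign homomorphism $\psi := \varepsilon \circ \phi_{opt} \colon \pi_1(E \setminus B) \to \bZ/2\bZ$, where $B$ is the branch locus. Since $\psi$ kills commutators, it factors through $H_1(E \setminus B, \bZ/2\bZ)$, and a branch point $b$ is odd precisely when $\psi(\gamma_b) = 1$ for a small loop $\gamma_b$ around $b$. The relation $\prod_b \gamma_b = [\alpha, \beta]$ in $\pi_1(E \setminus B)$ forces $\sum_b \psi(\gamma_b) = 0$, so the number of odd branch points is even, while Lemma \ref{OddBPExistsdoptEven} guarantees it is at least one, hence at least two. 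The goal is to identify this set of odd points, after translating the origin, with the $2$-torsion $E[2]$.

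The key input, exactly as in the proof of Lemma \ref{OddBPExistsdoptEven}, is that in Case (i) with $d_{opt}$ even every cylinder core curve is odd: its preimage under $\pi_{opt}$ is a single cylinder, so its monodromy is one $d_{opt}$-cycle, an odd permutation. Thus $\psi(c) = 1$ for every cylinder core $c$, in every direction. First I would use this to control how the odd branch points sit on lines: if $c_j$ and $c_{j+1}$ are the cores of two cylinders adjacent across a line $L$ of marked points, then $c_{j+1} - c_j$ is homologous in $E \setminus B$ to $\sum_{b \in L} \gamma_b$, because the annular region between the two cores meets the marked locus exactly in the points on $L$. Applying $\psi$ (where orientations are irrelevant modulo $2$) and using $\psi(c_j) = \psi(c_{j+1}) = 1$ gives $\sum_{b \in L} \psi(\gamma_b) = 0$; that is, every line of marked points, in every rational direction, carries an even number of odd branch points.

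Next I would feed this into the flat-geometry argument behind Lemma \ref{Torus2Torsion}. In the setting of the application we already know, by Theorem \ref{AtMostTwoParts} and Lemma \ref{AtLeastTwoParts} together with Lemma \ref{Torus2Torsion}, that $E$ splits into exactly two cylinders in every direction and that the marked points lie among the $2$-torsion points with $s \in \{3,4\}$. The even-per-line condition then pins down which of these are odd. If $s = 3$, then in the direction of the line through any two of the three $2$-torsion branch points the third is alone on its line and must be even; ranging over the three such directions makes all three even, contradicting Lemma \ref{OddBPExistsdoptEven}. Hence $s = 4$, all four $2$-torsion points are branched, and for every primitive direction they split $2$--$2$ across two lines, pairing points that differ by the nonzero element of $E[2]$ determined by that direction. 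Writing $E[2] = \{0, e_1, e_2, e_3\}$, the even-count requirement in the three principal directions forces $\psi(\gamma_0) = \psi(\gamma_{e_1}) = \psi(\gamma_{e_2}) = \psi(\gamma_{e_3})$, so all four parities are equal; since they are not all even, all four are odd. Translating the origin so that this coset becomes the subgroup $E[2]$ — which here requires no further movement, the points being $2$-torsion already — yields the claim.

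The main obstacle I anticipate is the homological identity $c_{j+1} - c_j \equiv \sum_{b \in L} \gamma_b$ in the second step: one must represent the core curves consistently in $\pi_1(E \setminus B)$, verify that a flat cylinder contains no branch points in its interior so that only the loops on the separating line contribute, and confirm that $\psi$ is insensitive to the resulting basepoint ambiguity. A secondary point, needed only for a proof independent of the ST-curve reductions, is the combinatorial claim that a finite subset of $E$ meeting every line in every rational direction in an even number of points must be a coset of $E[2]$; in the present application this is bypassed because Lemma \ref{Torus2Torsion} has already confined the branch points to the $2$-torsion, reducing the final step to the elementary parity computation above.
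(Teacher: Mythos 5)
Your argument is correct, and there is no circularity: Proposition \ref{STCurve2Torsion}, Theorem \ref{AtMostTwoParts}, Lemma \ref{AtLeastTwoParts} and Lemma \ref{OddBPExistsdoptEven} are all established independently of this lemma, so you are entitled to use them. The paper itself gives no proof here --- it cites M\"oller and remarks only that the argument is ``effectively identical to that of Proposition \ref{STCurve2Torsion}.'' The intended route is therefore to run the Lemma \ref{Torus2Torsion}-style combinatorics directly on the set of odd branch points, using the per-line parity constraint to show that set is a translate of $E[2]$; your proof isolates exactly that parity mechanism (each boundary line in each rational direction carries an even number of odd branch points, because adjacent core curves are both odd and differ in $H_1(E\setminus B,\bZ/2\bZ)$ by the loops around the points on the separating line --- a standard identity you rightly flag, valid because a flat cylinder contains no branch points in its interior), but then shortcuts the combinatorial endgame by invoking the already-proved confinement of $B$ to the $2$-torsion points and finishing with the $s=3$ exclusion and the $s=4$ parity computation in the principal directions. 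What your version buys is brevity and the elimination of a separate combinatorial lemma; what it costs is self-containedness --- M\"oller's original proof cannot appeal to Theorem \ref{AtMostTwoParts}, so a reader wanting the lemma in the generality M\"oller states it would still need the coset-of-$E[2]$ argument you sketch in your last paragraph. One small point of care: the $s=3$ case does arise from Proposition \ref{STCurve2Torsion}, so your explicit exclusion of it via the three pairing directions is not optional, and it is good that you carried it out.
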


Already this implies the following result.

\begin{proposition}
\label{H_2_2_2_1_1Excluded}
There are no ST-curves in $\cH(2,2,2,1,1)$.
\end{proposition}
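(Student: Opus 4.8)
The plan is to reach a contradiction through a parity count on the monodromy of the optimal torus covering. I first observe that $\cH(2,2,2,1,1)$ is the stratum listed as $\cH(1,1,2,2,2)$ in Table~\ref{StrataListTable}, so that $d_{opt}=12$ is even. Suppose toward a contradiction that some $(X,\omega)\in\cH(2,2,2,1,1)$ generates an ST-curve, and consider $\pi_{opt}\colon(X,\omega)\to(E,\eta)$. By Theorem~\ref{CaseiiAndiiiExclusion} the covering satisfies Case (i) of Lemma~\ref{MThreeCasesLem}, and by Proposition~\ref{STCurve2Torsion} it is branched only over $2$-torsion points of $E$. Since $\omega=\pi_{opt}^{*}\eta$ and $\eta$ is nowhere vanishing on $E$, a zero of $\omega$ of order $m$ is exactly a ramification point of $\pi_{opt}$ with ramification index $m+1$, and conversely; thus the full ramification profile of $\pi_{opt}$ consists of three points of index $3$ (from the order-$2$ zeros) and two points of index $2$ (from the order-$1$ zeros), with no other ramification.

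I would then convert this into the cycle structure of the local monodromy $\sigma_b\in S_{d_{opt}}$ at each branch point $b$: a ramification point of index $e$ over $b$ contributes an $e$-cycle to $\sigma_b$, so the nontrivial cycles occurring across all branch points are precisely three $3$-cycles and two transpositions. Because $\varepsilon$ of a $3$-cycle is $+1$ while $\varepsilon$ of a transposition is $-1$, we get $\varepsilon(\sigma_b)=(-1)^{t_b}$, where $t_b$ is the number of order-$1$ zeros lying over $b$. Since $\sum_b t_b=2$, the sequence $(t_b)$ is either $(2,0,\dots)$ or $(1,1,0,\dots)$, so the number of odd branch points is at most two; Lemma~\ref{OddBPExistsdoptEven} rules out the value $0$, pinning it down to exactly two.

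The contradiction comes from the normalization of Lemma~\ref{MLem5_18}: since $d_{opt}$ is even and Case (i) holds, one may translate the origin of $E$ so that the set of odd branch points is exactly the set of $2$-torsion points, of which there are four. This forces four odd branch points, contradicting the bound of two obtained above, so no ST-curve can lie in $\cH(2,2,2,1,1)$. I expect the only delicate point, though a modest one, to be the bookkeeping that passes from the orders of the zeros to the cycle types of the monodromy and isolates the fact that, in this stratum, only the two simple zeros can produce odd local monodromy; once that is secured, the arithmetic inequality $2<4$ closes the argument. It is worth noting that this is precisely the feature singling out $\cH(1,1,2,2,2)$ among the even-$d_{opt}$ strata of Table~\ref{StrataListTable}, as it is the only one whose monodromy carries just two odd cycles.
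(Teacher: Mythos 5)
Your argument is correct and is essentially the paper's own proof: both rest on Theorem~\ref{CaseiiAndiiiExclusion} to force Case (i), the evenness of $d_{opt}=12$, and Lemma~\ref{MLem5_18} to conclude that all four $2$-torsion points must be odd branch points, which is impossible with only two odd-order zeros available. You simply spell out the monodromy/sign bookkeeping that the paper leaves implicit.
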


\begin{proof}
By Theorem \ref{CaseiiAndiiiExclusion}, such a curve must satisfy Case (i).  From Table \ref{StrataListTable}, $d_{opt} = 12$ is even.  By Lemmas \ref{OddBPExistsdoptEven} and \ref{MLem5_18}, every $2$-torsion point has an odd order zero above it.  This is not possible in a stratum with exactly two odd order zeros.
\end{proof}

\subsection{Coordinates for Surfaces}

By Proposition \ref{STCurve2Torsion}, every ST-curve is generated by a surface branched over the $2$-torsion points of a torus with degree $d_{opt}$ as given in Table \ref{StrataListTable}.  Furthermore, each cylinder on the torus with period $w$ lifts to a unique cylinder of circumference $w d_{opt}$ on the surface above.  We fix the convention that we consider the torus as a $2 \times 2$ grid in the plane so that the $2$-torsion points are identified by the labels
$$\{(0,0), (1,0), (0,1), (1,1)\}.$$
In particular, the horizontal cylinders have circumference two and lift to cylinders with circumference $2d_{opt}$.  We label the cylinders $C_1$ and $C_2$.

\subsubsection{$1$-Cylinder Diagrams}

We recall that since $C_1$ and $C_2$ are homologous, cutting the core curves of each separates the surface into two parts.  One could identify the boundaries in each of these parts to get two disjoint surfaces each consisting of a single cylinder.

\begin{definition}
A \emph{$1$-cylinder diagram} $\fc = [\pi_1, \pi_2] \in S_n \times S_n$ is a pair of $n$-cycles specifying how to label $n$ saddle connections on the bottom of a cylinder from \emph{left to right} by $\pi_1$ and the top of the cylinder by $\pi_2$ from \emph{right to left}.\footnote{The difference in the order of the saddle connections is a convention from Sage.  We copy it here so that the code using Sage agrees with what is written in this paper.}
\end{definition}

Observe that a $1$-cylinder diagram contains no geometric information about the cylinder.

\begin{example}
Since $1$-cylinder diagrams do not contain any geometric data and cylinders can be freely rotated and sheared, it suffices to label the saddle connections on the bottom of a cylinder by numbers in the set $\{0, \ldots, n\}$ and the top is a permutation of these numbers.  It is crucial to note that in Sage the top permutation is written in \emph{reverse order}!

For example, the unique $1$-cylinder diagram in $\cH(2)$ using Python notation will be written
$$[[[0, 1, 2], [0, 1, 2]]]$$
because the three saddle connections on the bottom of the cylinder are labeled with $[0,1,2]$ from left to right and the three saddle connections on the top of the cylinder are labeled with $[0,1,2]$ from \emph{right to left}.\footnote{The apparent extra brackets in this notation permit multiple cylinders, so we do not break this convention even though it is not relevant to this paper.}
\end{example}

Therefore, if $M$ generates an ST-curve in the stratum $\cH(\kappa)$, then $C_1$ and $C_2$ induce a partition of $\kappa = \kappa_1 \sqcup \kappa_2$ such that $\kappa_1$ is the set of zeros between the top of $C_1$ and the bottom of $C_2$ and $\kappa_2$ is the set of zeros between the top of $C_2$ and the bottom of $C_1$.  Since ST-curves lie in genus at most five, the $1$-cylinder diagrams that we consider lie in genus at most four and these can be readily enumerated by any computer, or even by hand in most cases.

\subsubsection{Partitions}

Since cylinder diagrams do not come with any length information for the individual saddle connections by definition, we must determine all possible lengths of the saddle connections.  If a $1$-cylinder diagram has $n$ saddle connections and the cylinder has circumference $2d_{opt}$, then the lengths are given by considering all ordered partitions of $2d_{opt}$ into $n$ positive integers.  Each ordered partition will be written as a tuple $\cP = (p_1, \ldots, p_n)$.  In fact, this can be dramatically improved by setting conventions to eliminate equivalent or inadmissible surfaces.

\subsubsection{Conventions}

Since each point in moduli space is an equivalence class under the action of the mapping class group, we are free to cut and rearrange the surface as we wish.  Furthermore, Teichm\"uller curves are $\splin$-invariant, so we are free to rearrange the $2$-torsion points on the torus by acting on the surface by elements in $\text{SL}_2(\bZ)$, which includes shearing the cylinders by any desired amount.

Let $\{\tau_0, \ldots, \tau_{n_1 - 1}\}$ denote the set of saddle connections in the bottom of $C_1$ and let $\{\sigma_0, \ldots, \sigma_{n_2 - 1}\}$ denote the set of saddle connections in the bottom of $C_2$.  Let $\fc_1$ denote the $1$-cylinder diagram describing the identifications between the bottom of $C_1$ and the top of $C_2$.  Let $\fc_2$ denote the $1$-cylinder diagram describing the identifications between the bottom of $C_2$ and the top of $C_1$.  Let $\cP_1 = (t_0, \ldots, t_{n_1 - 1})$ denote an ordered partition of $2d_{opt}$ into $n_1$ numbers, so that the bottom of $C_1$ (and top of $C_2$) contain exactly $n_1$ saddle connections.  Likewise, let $\cP_2 = (s_0, \ldots, s_{n_2 - 1})$ denote an ordered partition of $2d_{opt}$ into $n_2$ numbers, so that the bottom of $C_2$ (and top of $C_1$) contain exactly $n_2$ saddle connections.

Cut and reglue $C_1$ and shear it if necessary so that when depicted as a rectangle the top left and bottom left of the first square in $C_1$ contain a zero, and in both cases is incident on the left with a longest saddle connection in each, which we declare to be $\sigma_0$ and $\tau_0$ with lengths $s_0$ and $t_0$, respectively.  Finally, rearrange $C_2$ if necessary so that the bottom left of the first square in $C_2$ contains a zero and the saddle connection to its right is exactly $\sigma_0$.  At this point, all choices have been exhausted and we cannot choose any conventions for the top of $C_2$.  Nevertheless, the location of $\tau_0$ on the top of $C_2$ is dramatically restricted as we will see below.

\subsubsection{Surface Coordinates}

The reader is advised to consult Figure \ref{SurfCoordSchematicFig} throughout this section.  With the cylinders arranged as above, we label the individual squares.  Label the squares of $C_1$ by $\{1, \ldots, 2d_{opt}\}$ and the squares of $C_2$ by $\{2d_{opt} + 1, \ldots, 4d_{opt}\}$.  Given two $1$-cylinder diagrams and partitions all ordered to follow the conventions above, the only piece of data needed to determine the surface is the starting point of a saddle connection on the top of $C_2$.  We choose the left endpoint of the saddle connection $\tau_0$ as the saddle connection that we wish to record.  For reasons that will be clarified below, define the value $t_{start}$ to be the non-negative integer such that the square $2d_{opt} + 2s_0 + t_{start}$ has the left endpoint of $\tau_0$ in its upper left corner.  It will be be proven below that $t_{start}$ can only realize a very small range of values.  With this, the surface coordinates we consider will be
$$((\cP_1, \fc_1), t_{start}, (\cP_2, \fc_2)).$$

For example, the following coordinates in Python notation produces a surface in $\cH(2,2,2)$.
$$(((1, 1, 1, 1, 1, 1), [[0, 5, 3, 1, 2, 4], [0, 5, 3, 1, 2, 4]]), 0, ((2, 2, 2), [[0, 1, 2], [0, 1, 2]]))$$
In fact, this is the Ornithorynque.

We do \emph{not} claim that two different coordinates give rise to two different surfaces.  In fact, we will see that there are numerous redundancies.  Nevertheless, sufficiently many have been eliminated with these conventions to make the problem computationally feasible.

\begin{figure}[htb]
  \centering
  \begin{tikzpicture}[scale=1]
    \draw (0,0) -- (10,0) -- (10,1) -- (0,1) -- (0,0);
    \draw (0,3) -- (10,3) -- (10,4) -- (0,4) -- (0,3);
    \draw [dashed] (1,0) -- (1,1);
    \draw [dashed] (2,0) -- (2,1);
    \draw [dashed] (9,0) -- (9,1);
    \draw [dashed] (1,3) -- (1,4);
    \draw [dashed] (2,3) -- (2,4);
    \draw [dashed] (9,3) -- (9,4);
    \foreach \x in {(0,0), (3,0), (0,1), (2,1), (10,0), (10,1)} {\node[circle, fill=black] at \x {};}
    \foreach \x in {(0,3), (2,3), (10,3), (5,4), (8,4)} {\node[circle, fill=black] at \x {};}
    \foreach \x in {(0,4), (4,4)} {\node[rectangle, fill=black] at \x {};}
    \node[circle] at (1/2,1/2) {$1$};
    \node[circle] at (1+1/2,1/2) {$2$};
    \node[circle] at (9+1/2,1/2) {$2d_{opt}$};
    \node[circle] at (1/2,3+1/2) {\tiny $2d_{opt} + 1$};
    \node[circle] at (1+1/2,3+1/2) {\tiny $2d_{opt} + 2$};
    \node[circle] at (9+1/2,3+1/2) {$4d_{opt}$};
    \node[circle, label=above:$\tau_0$] at (13/2,4) {};
    \node[circle, label=below:$\tau_0$] at (3/2,0) {};
    \node[circle, label=above:$\sigma_0$] at (1,1) {};
    \node[circle, label=below:$\sigma_0$] at (1,3) {};
    \node[circle, label=left:$C_1$] at (0,1/2) {};
    \node[circle, label=left:$C_2$] at (0,3+1/2) {};
    \draw [decoration={brace,mirror,amplitude=7}, decorate] (0,-.5) --node[below=3mm]{$t_0$} (3,-.5);
    \draw [decoration={brace,mirror,amplitude=7}, decorate] (0,2.5) --node[below=3mm]{$s_0$} (2,2.5);
    \draw [decoration={brace,amplitude=7}, decorate] (0,4.5) --node[above=3mm]{$2s_0$} (4,4.5);
    \draw [decoration={brace,amplitude=7}, decorate] (4,4.5) --node[above=3mm]{$t_{start}$} (5,4.5);
  \end{tikzpicture}
\caption{Schematic of the Surface Coordinates Used in the Algorithm: Circle vertices represent zeros and square vertices could represent zeros or regular points}
\label{SurfCoordSchematicFig}
\end{figure}
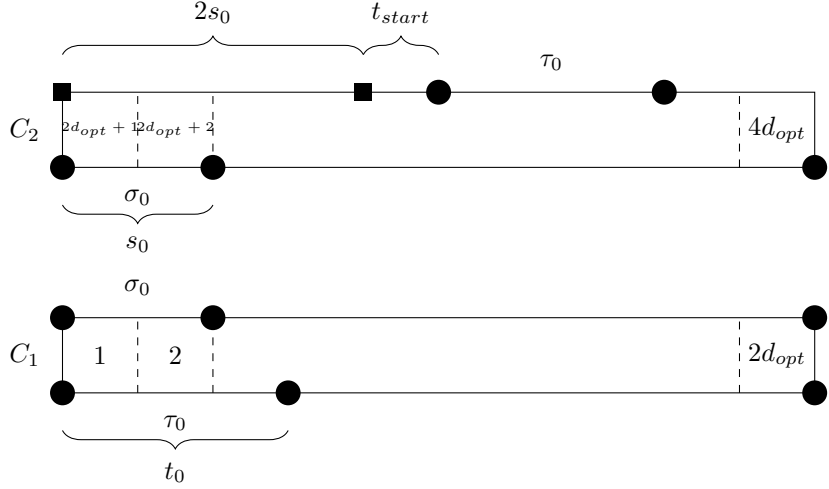

\subsection{Enumeration of the Cases}

Since all of the branching occurs over the $2$-torsion points, there are very few possibilities for which zeros can lie over which $2$-torsion points.  Nevertheless, in most cases even after fixing conventions the possibilities are not unique in each stratum.  We enumerate all of them here and fix conventions.

\subsubsection{$d_{opt}$ Even}

In this case, \cite[Lem. 5.18]{MollerShimuraTeich} (Lemma \ref{MLem5_18} above) implies that each $2$-torsion point must have an odd order zero above it.  The remaining zeros must be partitioned into even numbers and distributed over the $2$-torsion points.  Since the sum of the orders of the zeros is at most eight, which is realized in genus five, and at least total order four must be absorbed by the odd branching over the $2$-torsion points, there are either one or two of the $2$-torsion points that have additional branching over them.  Furthermore, observe from Table \ref{StrataListTable} that every stratum with $d_{opt}$ even has at least two simple zeros.  We make the following crucial observation, which follows from the fact that we are free to rotate and rearrange the surface.

\

\noindent \textbf{Observation}: The surface can be rotated and rearranged so that exactly one $2$-torsion point in each set $\{(0,0), (1,0)\}$ and $\{(0,1), (1,1)\}$ has a simple zero above it and no other branching.

\

Combine with \cite[Lem. 5.6]{MollerShimuraTeich}, which was stated for ST-curves even though the proof establishes the following general lemma without modification.  This will dramatically reduce the number of coordinates for surfaces that we need to consider.

\begin{lemma}
\label{SimpZeroToSelfExc}
A cylinder for a $1$-cylinder diagram in a stratum without marked points cannot contain a saddle connection in its boundary from a simple zero to itself.
\end{lemma}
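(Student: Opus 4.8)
The plan is to argue entirely locally at the simple zero, using only the cone angle of a simple zero together with the combinatorial fact that in a \emph{single} cylinder every saddle connection bounds the cylinder on both sides. Suppose for contradiction that the boundary of the cylinder associated to $\fc = [\pi_1, \pi_2]$ contains a saddle connection $\gamma$ both of whose endpoints are the same simple zero $p$. Since the diagram has a single cylinder, $\gamma$ occurs exactly once along the bottom, as an edge $b_i$ between its neighbors $b_{i-1}$ and $b_{i+1}$, and exactly once along the top, as an edge $t_j$ between $t_{j-1}$ and $t_{j+1}$; here the cylinder lies above the bottom copy and below the top copy, so that all four endpoints of $b_i$ and $t_j$ are glued to $p$.

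First I would record the local picture of separatrices at $p$. Because $p$ is a simple zero its cone angle is $4\pi$, so exactly two horizontal prongs emanate to the right and exactly two to the left; equivalently, going once around $p$ one meets four $\pi$-sectors, alternating between ``above a bottom edge'' and ``below a top edge'' (the alternation is forced because crossing any edge-germ passes from the cylinder above that edge's bottom copy to the cylinder below its top copy). The loop $\gamma$ supplies one rightward prong $(\gamma, \to)$ and one leftward prong $(\gamma, \leftarrow)$. The edges $b_{i-1}$ and $t_{j-1}$ each have an endpoint at $p$ and each leave $p$ to the left, so each determines a leftward prong distinct from $(\gamma, \leftarrow)$. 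As $p$ admits only two leftward prongs in total, these must coincide, which forces $b_{i-1}$ and $t_{j-1}$ to be one and the same saddle connection leaving $p$ to the left.

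The contradiction then follows by looking at the two $\pi$-sectors flanking $\gamma$ on the left: the sector lying above the bottom copy of $\gamma$ and the sector lying below the top copy of $\gamma$. Each is bounded on the right by $(\gamma, \to)$ and on the left by the now common prong of $b_{i-1} = t_{j-1}$. Two $\pi$-sectors bounded by the same pair of prongs fill out a full $2\pi$, so the cone angle at $p$ would be $2\pi$, contradicting that $p$ is a simple zero. I expect the main obstacle to be bookkeeping the prong directions correctly --- in particular verifying that the two leftward prongs adjacent to $\gamma$ are genuinely forced to be identified, and that this identification already closes the cone at $2\pi$ rather than allowing it to continue. The hypothesis that the stratum has no marked points is what guarantees that the endpoint in question is an honest order-one zero with exactly two prongs on each side (and that no auxiliary $2\pi$ cone point hides among the corners), so that the count ``only two leftward prongs'' is available and the resulting $2\pi$ cone is a genuine contradiction.
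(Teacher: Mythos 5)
Your proof is correct and follows the same idea as the paper's, which simply asserts in one sentence that no identification of the saddle connections incident to the loop can produce a simple zero at its endpoints; your prong/cone-angle bookkeeping is exactly the verification of that assertion, carried out in full (showing the forced identifications close the cone at angle $2\pi$). The only minor point left implicit is the degenerate case $n=1$, where $b_{i-1}=b_i$, but there the corner has cone angle $2\pi$ and so is a marked point, which is excluded by hypothesis.
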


\begin{proof}
By contradiction, let $\sigma$ be a saddle connection from a simple zero to itself on the top and bottom of a $1$-cylinder surface.  Observe that there is no admissible identification of the saddle connections incident with $\sigma$ that would result in the endpoints of $\sigma$ corresponding to a simple zero.
\end{proof}

\noindent \underline{$\cH(1^4)$}: There is exactly one simple zero over each $2$-torsion point.

\

\noindent \underline{$\cH(1^6)$}: There is one simple zero over each $2$-torsion point.  The remaining two simple zeros must occur over the same point, which we place at the origin without loss of generality.

\

\noindent \underline{$\cH(2, 1^6)$}: There is one simple zero over each $2$-torsion point, and either
\begin{itemize}
\item all three remaining zeros descend to a point in $\{(0,0), (1,0)\}$,\footnote{We emphasize here that this one case is the most computationally intensive of all of the cases by several orders of magnitude.  If run on a single processor, we estimate that it could take 7 years to exclude it.  However, the search is fully parallelizable.} or
\item the two remaining simple zeros occur over one of the points in $\{(0,0), (1,0)\}$ and the double zero occurs over one of the points in $\{(0,1), (1,1)\}$.
\end{itemize}

\

\noindent \underline{$\cH(2, 2, 1^4)$}: There is one simple zero over each $2$-torsion point, and either
\begin{itemize}
\item both double zeros descend to a point in $\{(0,0), (1,0)\}$, or
\item one double zero descends to one of the points in $\{(0,0), (1,0)\}$ and the other double zero descends to one of the points in $\{(0,1), (1,1)\}$.
\end{itemize}

\

\noindent \underline{$\cH(3, 1^5)$}: There is a simple zero over at least three of the $2$-torsion points.  The fourth $2$-torsion point
\begin{itemize}
\item has a triple zero above it and no other zero, and it descends to a point in $\{(0,1), (1,1)\}$.  The two remaining simple zeros descend to one of the points in $\{(0,0), (1,0)\}$.
\item The fourth $2$-torsion point has a triple zero above it as well as both of the remaining simple zeros, and they descend to one of the points in $\{(0,0), (1,0)\}$.
\end{itemize}

\

\noindent \underline{$\cH(1, 1, 3,3)$}: The branching is unique in this case because each $2$-torsion point must have an odd order zero above it.  However, we choose to arrange it so that one of the triple zeros lies over exactly one of the points in $\{(0,0), (1,0)\}$ and the other over one of the points in $\{(0,1), (1,1)\}$.

\

\noindent \underline{$\cH(4, 1^4)$}: There is one simple zero over each $2$-torsion point.  The quadruple zero can be arranged so that it descends to one of the points in $\{(0,0), (1,0)\}$.

\subsubsection{$d_{opt}$ Odd}

\noindent \underline{$\cH(2^3)$}: Three of the $2$-torsion points must have a double zero above them by Proposition \ref{STCurve2Torsion}.  Therefore, we can arrange the surface so that each of $(0,0)$ and $(1,0)$ have a double zero above them.

\

\noindent \underline{$\cH(2^4)$}: Three of the $2$-torsion points must have a double zero above them by Proposition \ref{STCurve2Torsion}.  Therefore, we can arrange the surface so that either 
\begin{itemize}
\item each $2$-torsion point has exactly one double zero above it, or
\item three double zeros descend to $\{(0,0), (1,0)\}$.
\end{itemize}

\subsection{The Algorithm for $d_{opt}$ Even}

\begin{lemma}
\label{FourOddSC}
Given the conventions and choices above in the case when $d_{opt}$ is even, there are exactly four saddle connections between the boundaries of each cylinder with odd length and they correspond to all of the saddle connections incident with a simple zero.  All the rest of the saddle connections have even length.
\end{lemma}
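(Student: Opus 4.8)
The plan is to reduce the statement to a parity bookkeeping along the two horizontal lines of $E$ through the $2$-torsion points, and then to pin all odd saddle connections at a single, isolated simple zero.

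\textbf{Parity criterion.} First I would record where the zeros and saddle connections sit. Since $\omega$ is the pullback of a nowhere-vanishing differential on $E$, every zero of $\omega$ is a ramification point of $\pi_{opt}$ and hence lies over a $2$-torsion point, i.e. on one of the two horizontal lines $y=0$ and $y=1$; these two lines carry, respectively, the saddle connections of the two $1$-cylinder diagrams $\fc_1$ and $\fc_2$. A boundary saddle connection is a lift of a concatenation of unit horizontal segments of such a line. Along a fixed horizontal line the $2$-torsion points occur at every integer abscissa and \emph{alternate} between the two classes (over $y=0$ between $(0,0)$ and $(1,0)$, over $y=1$ between $(0,1)$ and $(1,1)$), and the interior integer points a saddle connection crosses are necessarily regular preimages, since a saddle connection has no zero in its interior. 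Consequently the length of a boundary saddle connection, measured in unit squares, has the same parity as the difference of the abscissae of its endpoints. Thus a boundary saddle connection has \emph{odd} length if and only if its two endpoints lie over the two \emph{distinct} $2$-torsion points of that line, and \emph{even} length if and only if both endpoints lie over the \emph{same} $2$-torsion point.

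\textbf{Locating the odd saddle connections.} Next I would invoke the Observation above together with Lemma \ref{MLem5_18}: on the line $y=0$ one of the two $2$-torsion points, say $P$, carries above it a single simple zero $Z^*$ and no other branching, so the local monodromy at $P$ is a single transposition and $Z^*$ is the \emph{unique} zero of $\omega$ over $P$. A simple zero has cone angle $4\pi$ and therefore exactly four horizontal separatrices, two pointing east and two pointing west, all lying on the line $y=0$. By Lemma \ref{SimpZeroToSelfExc} none of these separatrices pairs into a saddle connection from $Z^*$ to itself, so $Z^*$ is incident to exactly four distinct boundary saddle connections.

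\textbf{Conclusion.} Each of these four saddle connections has one endpoint at $Z^*$ over $P$; by Lemma \ref{SimpZeroToSelfExc} its other endpoint is not $Z^*$, and since $Z^*$ is the only zero over $P$, that endpoint lies over the other $2$-torsion point $Q$ of the line. By the parity criterion these four saddle connections are odd. Conversely, any odd boundary saddle connection over $y=0$ joins a zero over $P$ to a zero over $Q$, and as $Z^*$ is the only zero over $P$ it must be one of these four. Hence there are exactly four odd saddle connections over $y=0$, precisely those incident with the simple zero $Z^*$, while every remaining boundary saddle connection joins two zeros lying over the same $2$-torsion point and is therefore even. Running the identical argument on $y=1$ (i.e. for the diagram $\fc_2$) treats the second cylinder and completes the proof.

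\textbf{Main obstacle.} The conceptual heart is the parity criterion, which converts the metric statement about lengths into the purely combinatorial question of which $2$-torsion point an endpoint lies over. The step most in need of care is the count ``exactly four'': it depends on the Observation to guarantee that over one of the two $2$-torsion points on each line there sits a \emph{single, isolated} simple zero, so that it has precisely four separatrices, and on Lemma \ref{SimpZeroToSelfExc} to exclude self-saddle-connections. Without this isolation one could a priori produce even saddle connections between two zeros over the other $2$-torsion point $Q$, and the argument must make clear that such connections never carry odd length and hence never disturb the count.
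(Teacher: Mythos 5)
Your proof is correct and follows essentially the same route as the paper's: the cone angle $4\pi$ plus Lemma \ref{SimpZeroToSelfExc} give exactly four saddle connections at the isolated simple zero, and the width-two base torus converts ``same versus distinct $2$-torsion point'' into the parity of the length. Your explicit parity criterion and the east/west separatrix count are just slightly more detailed versions of the steps the paper states in passing.
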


\begin{proof}
The total angle around a simple zero is $4\pi$.  Therefore a simple zero must have two copies on each boundary of a cylinder.  Hence, it is incident with at most four saddle connections, and by Lemma \ref{SimpZeroToSelfExc}, it must be incident with exactly four saddle connections.  Since the simple zero is the unique zero over one of the $2$-torsion points by the arrangement of the cases above, each of these saddle connections must have odd length because they join a zero over one $2$-torsion point to a zero over a different $2$-torsion point.  On the other hand, all other zeros in the boundary of a cylinder lie over the same $2$-torsion point and the base torus has width two.  Therefore, all other saddle connections have even length.
\end{proof}

The following corollary, when combined with Proposition \ref{STCurve2Torsion}, provides a simplified proof of \cite[Cor. 5.19]{MollerShimuraTeich}.

\begin{corollary}
\label{ExcludeH1t6}
There are no ST-curves in $\cH(1^6)$.
\end{corollary}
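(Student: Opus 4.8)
The plan is to derive a contradiction from a parity-and-degree count on the fibers of the optimal torus covering, so that no such covering can realize the six simple zeros demanded by $\cH(1^6)$. First I would record the setup: $\cH(1^6)$ lies in genus four with $d_{opt}=4$ by Table \ref{StrataListTable}, and any ST-curve here satisfies Case (i) of Lemma \ref{MThreeCasesLem} by Theorem \ref{CaseiiAndiiiExclusion}. Since $d_{opt}=4$ is even, Lemma \ref{MLem5_18} applies after translating the origin, so the set of odd branch points of $\pi_{opt}$ is \emph{exactly} the set of four $2$-torsion points of $E$.

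Next I would translate the branch data into the number of zeros over each $2$-torsion point. As $\pi_{opt}\colon (X,\omega)\to(E,\eta)$ is a translation cover, $\omega=\pi_{opt}^*\eta$ is the pullback of the nowhere-vanishing differential on $E$, so the zeros of $\omega$ are precisely the ramification points and their orders are the ramification indices minus one. Every zero in $\cH(1^6)$ is simple, hence has ramification index $2$ and contributes a transposition to the local monodromy. Thus the local monodromy at a $2$-torsion point $P$ is a product of $k_P$ disjoint transpositions, where $k_P$ is the number of simple zeros over $P$, and its sign is $(-1)^{k_P}$. Being an odd branch point forces $(-1)^{k_P}=-1$, so $k_P$ is odd; in this all-simple stratum this is the exact parity version of the statement, recorded in the enumeration, that each $2$-torsion point carries an odd-order zero.

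The degree now pins $k_P$ down completely. Over any point of $E$ the ramification indices sum to the degree $d_{opt}=4$, and each simple zero contributes $2$, so $k_P\le 2$. Together with $k_P$ odd this gives $k_P=1$ for all four $2$-torsion points, whence $\omega$ has exactly $4$ zeros. This contradicts the fact that a surface in $\cH(1^6)$ has six simple zeros, and the corollary follows.

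I expect the content to lie not in a single hard step but in the bookkeeping that the two natural distributions are simultaneously excluded: the parity condition kills every configuration having a $2$-torsion point of even multiplicity (for instance $(2,2,1,1)$), while the degree bound kills every configuration concentrating three zeros over one point (for instance $(3,1,1,1)$), and these two constraints together leave no admissible way to place six simple zeros over four $2$-torsion points. The only preliminary I would be careful to dispatch up front is that Case (i) holds, so that Lemma \ref{MLem5_18} is available; this is immediate from Theorem \ref{CaseiiAndiiiExclusion}.
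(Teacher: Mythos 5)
Your core computation is sound, and it takes a genuinely different route from the paper. The paper's proof is flat-geometric: by Lemma \ref{FourOddSC}, exactly four of the saddle connections on the boundary between the two horizontal cylinders have odd length, while that boundary carries four simple zeros, so the $1$-cylinder diagram in $\cH(1^4)$ has eight saddle connections whose lengths partition the circumference $2d_{opt}=8$; the only such partition is all ones, i.e.\ eight odd lengths, a contradiction. Your argument instead works at the level of the monodromy representation: the sign of the local monodromy at an odd branch point forces an odd number $k_P$ of transpositions, and the degree bound $2k_P\le d_{opt}=4$ forces $k_P\le 2$, hence $k_P=1$ over each $2$-torsion point. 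This is in the same spirit as the paper's Proposition \ref{H_2_2_2_1_1Excluded}, is arguably cleaner since it needs neither the cylinder-diagram formalism nor Lemma \ref{FourOddSC}, and it disposes of the configuration with three simple zeros over one $2$-torsion point --- the one the paper's enumeration retains and then eliminates via saddle-connection lengths --- by a one-line degree count.

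There is, however, one genuine gap. Lemma \ref{MLem5_18} identifies only the \emph{odd} branch points with the $2$-torsion points; it places no restriction on even branch points. Your step ``$k_P=1$ for all four $2$-torsion points, whence $\omega$ has exactly $4$ zeros'' therefore does not follow from what you have assumed: the configuration with one simple zero over each $2$-torsion point and two further simple zeros over some other point $Q$ of $E$ has even local monodromy at $Q$ (a product of two disjoint transpositions, with ramification indices summing to $2+2=4=d_{opt}$), is consistent with Case (i), with Lemma \ref{MLem5_18}, and with your parity and degree constraints, and it realizes $\cH(1^6)$. To close the argument you must invoke Proposition \ref{STCurve2Torsion} (which rests on Theorem \ref{AtMostTwoParts} and Lemma \ref{Torus2Torsion}) to guarantee that \emph{all} branching of $\pi_{opt}$ occurs over the $2$-torsion points, so that the six simple zeros really must be distributed among those four fibers. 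With that citation added, your proof is complete.
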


\begin{proof}
Recall that $d_{opt} = 4$, so the circumference of a cylinder is $8$ in this case.  The bottom of $C_1$ has four simple zeros in its boundary and $1$-cylinder diagrams in $\cH(1^4)$ have eight saddle connections in their boundary.  There does not exist a partition of eight into eight positive integers such that exactly four, and no more, are odd.  This contradicts Lemma \ref{FourOddSC}.
\end{proof}

\begin{remark}
If the boundary between two cylinders consists of exactly two simple zeros, then we need only consider partitions of $2d_{opt}$ into four odd numbers.
\end{remark}

\noindent \textbf{Step 1}: For each $1$-cylinder diagram, label the vertices and construct lists of the vertex labels.  For each simple zero construct a binary list that will correspond to saddle connection lengths modulo two by associating a $1$ to each saddle connection incident with a fixed simple zero.  The functions for doing this are contained in the notebook \verb|cyl_diag_fcns.ipynb|.  The function that produces the list of all possible binary lists for each of the $1$-cylinder diagrams is \verb|strat_odd_sc|.

\

We state a trivial lemma that is actually very powerful because it is so computationally efficient.  In fact, it is sufficiently strong that given the setup so far it will completely determine both the genus three and genus four examples.

\begin{lemma}[The Window Lemma]
\label{WindowLem}
There does not exist a closed regular trajectory from a saddle connection to itself that crosses the core curves of $C_1$ and $C_2$ exactly once.
\end{lemma}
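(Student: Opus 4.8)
The plan is to pass to the torus via the optimal covering $\pi_{opt}\colon X\to E$ and extract a monodromy obstruction. Suppose such a closed regular trajectory $\gamma$ exists, in a direction $\theta$. Projecting, $\bar\gamma:=\pi_{opt}(\gamma)$ is a closed regular geodesic on $E$ that crosses each of the two horizontal core curves of $E$ once, hence winds exactly once in the vertical direction; in particular $\bar\gamma$ is a \emph{primitive} closed geodesic on $E$ in the direction $\theta$. The hypothesis that $\gamma$ crosses the core of $C_1$ and the core of $C_2$ exactly once says precisely that $\gamma$ covers $\bar\gamma$ with degree one. Equivalently, the covering monodromy around the loop $\bar\gamma$ fixes the sheet through the basepoint of $\gamma$: the $\theta$-flow returns to that sheet after a single traversal of $\bar\gamma$.

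First I would translate this into a statement about circumferences. The trajectory $\gamma$ is the core curve of a cylinder $D$ in the completely periodic direction $\theta$, and its circumference equals the length of the primitive geodesic $\bar\gamma$ on $E$. By Lemma \ref{Torus2Torsion} the torus $E$ has exactly two cylinders in the direction $\theta$, and on a flat torus every closed geodesic in a fixed direction has the same length, so each $\theta$-cylinder of $X$ has circumference equal to a positive-integer multiple of that common primitive length, the integer being its covering degree over the corresponding cylinder of $E$. The existence of the window $\gamma$ exhibits one $\theta$-cylinder whose covering degree is one.

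Now I would invoke Proposition \ref{AllCylsHom}: in the direction $\theta$ all cylinders of $X$ are homologous, hence all have equal holonomy and therefore equal circumference. Combined with the previous paragraph this forces every $\theta$-cylinder of $X$ to cover its $E$-cylinder with degree one, i.e.\ the covering monodromy around each of the two $\theta$-core curves of $E$ is trivial. Since the two $\theta$-core curves of $E$ are separated by $\theta$-geodesics through the branch points, the monodromy around the branch points enclosed between them is a product of these two trivial monodromies, and is therefore trivial as well. This contradicts the fact that $\pi_{opt}$ is genuinely branched over the $2$-torsion points of $E$ (Proposition \ref{STCurve2Torsion}) with $X$ a connected cover of degree $d_{opt}>1$.

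The step I expect to be the main obstacle is the last one: carefully organizing the $\pi_1(E\setminus B)$ bookkeeping so that ``all $\theta$-core monodromies are trivial'' really does force a branch point to be unramified. One must choose the two $\theta$-core curves and a basepoint so that the branch points lying between them are exactly those whose monodromy is captured by the quotient of the two core monodromies, and one must handle the degenerate possibility that a $\theta$-geodesic passes through a branch point. Everything else is a direct computation once the window has been converted into a degree-one lift of the primitive geodesic $\bar\gamma$.
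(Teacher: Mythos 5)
Your first two paragraphs are essentially the paper's own argument in different clothing: converting the hypothesis ``crosses each core curve exactly once'' into the statement that the trajectory is a degree-one lift of a primitive closed geodesic of $E$ is the same as the paper's observation that, after shearing the trajectory to the vertical, it is a closed regular trajectory of length $2$. At that point the paper is already finished, because the standing hypothesis of Section~\ref{TwoTorPtSect} (Case~(i), via Proposition~\ref{STCurve2Torsion}) is that \emph{every} cylinder of $E$ lifts to a \emph{unique} cylinder of $X$, of circumference $d_{opt}$ times that of the base cylinder; since $d_{opt}\geq 2$, a cylinder of $X$ covering its $E$-cylinder with degree one cannot exist. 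You do not invoke this hypothesis and instead try to manufacture a contradiction from scratch via monodromy, and that is where the argument fails.

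The final step is a genuine gap, not just a technicality to be organized. Granting that every $\theta$-cylinder of $X$ covers its $E$-cylinder with degree one, what you actually obtain is that the monodromy of $\pi_{opt}$ around each core curve $c_1,c_2$ of $E$ is trivial. Since $c_1$ and $c_2$ differ in $\pi_1(E\setminus B)$ by the product of the loops around the branch points lying on the boundary circle between them, triviality of both core monodromies only forces the \emph{product} of those branch-point monodromies to be trivial, not each factor individually. When $|B|=4$ the four $2$-torsion points always split two-and-two between the two boundary circles of the $\theta$-cylinders of $E$, and a product of two nontrivial involutions is frequently the identity; for the Eierlegende Wollmilchsau itself the two branch monodromies on each boundary circle are both the transposition in $S_2$ and their product is trivial. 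So ``the covering is genuinely branched'' is not contradicted, and the proof does not close (it would close only in the $|B|=3$ cases, where one boundary circle carries a single branch point). If you insist on finishing along your route rather than using the unique-lift hypothesis, the correct endgame is different: degree one for every $\theta$-cylinder yields $2d_{opt}\geq 4$ cylinders in direction $\theta$, all homologous by Proposition~\ref{AllCylsHom}, so the associated degenerate surface is a ring with $2d_{opt}$ parts, contradicting Theorem~\ref{AtMostTwoParts}. Either repair is short, but as written the last paragraph proves nothing.
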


\begin{proof}
Shear the surface so that the trajectory becomes vertical.  Then the closed trajectory has length two, but vertical cylinders must have circumference $2d_{opt} > 2$.
\end{proof}

The following corollary strongly restricts the range of $t_0$ and $s_0$ in the partitions above.

\begin{corollary}
\label{s0t0Inequality}
Let $s_0$ and $t_0$ be elements in the partition as above.  Then $t_{start}$ is even and satisfies 
$$2d_{opt} - 2t_0 - 2s_0 \geq t_{start} \geq 0.$$
\end{corollary}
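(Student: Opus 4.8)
The plan is to treat the three assertions---$t_{start}\ge 0$, the parity of $t_{start}$, and the upper bound---separately: the first is immediate from the definition, and the other two rest respectively on the covering $\pi_{opt}$ and on the Window Lemma (Lemma~\ref{WindowLem}). Throughout I would present $C_1$ and $C_2$ as horizontal strips of unit squares, pushing all twisting into the two gluings, so that by the conventions fixed above $\tau_0$ occupies the interval $[0,t_0]$ along the bottom of $C_1$ while its second copy occupies $[\,2s_0+t_{start},\,2s_0+t_{start}+t_0\,]$ along the top of $C_2$; here the offset $2s_0+t_{start}$ is exactly the horizontal position recorded by $t_{start}$ (see Figure~\ref{SurfCoordSchematicFig}). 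The bound $t_{start}\ge 0$ then holds by fiat, since $t_{start}$ was defined to be the nonnegative integer locating the left endpoint of $\tau_0$.

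For the parity I would project to the torus $E=\mathbb{R}^2/(2\mathbb{Z})^2$. Since $C_2$ covers its horizontal cylinder in $E$ with degree $d_{opt}$, the horizontal coordinate of a point of $C_2$ reduces modulo $2$ to the horizontal coordinate of its image in $E$. By our normalization the left endpoint of $\tau_0$ is a zero lying over the $2$-torsion point at the origin, whose horizontal coordinate in $E$ is $0$; hence wherever this zero is drawn its horizontal position on $M$ is even. On the top of $C_2$ it sits at position $2s_0+t_{start}$, so $2s_0+t_{start}\equiv 0\pmod 2$ and therefore $t_{start}$ is even.

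The upper bound is where the Window Lemma enters. The idea is that pushing $\tau_0$ too far to the right along the top of $C_2$ makes its second copy line up vertically with the first copy on the bottom of $C_1$: if the interval $[\,2s_0+t_{start},\,2s_0+t_{start}+t_0\,]$ meets $[0,t_0]$ modulo $2d_{opt}$ (allowing for wrap-around past the circumference $2d_{opt}$), then a vertical trajectory issuing from an interior point of $\tau_0$ on the bottom of $C_1$ runs straight up through both cylinders and lands again on $\tau_0$ at the top of $C_2$. Such a trajectory crosses the core curve of each of $C_1$ and $C_2$ exactly once and is vertical of length $2$, contradicting Lemma~\ref{WindowLem}. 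Requiring the two copies of $\tau_0$ to be disjoint modulo $2d_{opt}$ therefore forces an inequality of the shape $2s_0+t_{start}+ c\,t_0\le 2d_{opt}$, and the claim is that the correct clearance yields $c=2$, i.e. the asserted bound.

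The main obstacle is precisely pinning down this constant, namely proving $c=2$ rather than the naive clearance $c=1$ that the bare overlap of two length-$t_0$ intervals would suggest. This will require (i) tracking the two gluing twists carefully enough to be certain the second copy of $\tau_0$ really sits at $2s_0+t_{start}$ and not at some shifted position, and (ii) locating all singularities the candidate trajectory could meet, so as to guarantee it is genuinely regular and crosses each core exactly once. Here the structure forced by $d_{opt}$ even should be essential: by Lemma~\ref{FourOddSC} the simple zero at the left end of $\tau_0$ is incident with two saddle connections on each cylinder boundary, so the vertical strip over all of $\tau_0$ must be kept clear of its image on \emph{both} the starting side and the wrap-around side, and it is this two-sided obstruction that I expect to produce the extra factor of $t_0$; the analogous bookkeeping involving the placement of $\sigma_0$ accounts for the term $2s_0$ and tightens $t_{start}\ge 0$. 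Once this clearance is verified the inequality $2d_{opt}-2t_0-2s_0\ge t_{start}\ge 0$ follows, and in particular it forces $s_0+t_0\le d_{opt}$.
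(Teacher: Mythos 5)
Your parity argument and your observation that $t_{start}\ge 0$ is essentially built into the conventions both agree with the paper. The genuine gap is exactly the point you flag yourself: the constant $c=2$ in the upper bound. The paper does not obtain it from vertical trajectories, and it has nothing to do with the incidence structure of the simple zero (Lemma~\ref{FourOddSC} is specific to $d_{opt}$ even, while Corollary~\ref{s0t0Inequality} is used in the odd case as well). It comes from applying the Window Lemma to segments of \emph{arbitrary} slope that pass through the window $\sigma_0$. A straight segment leaving $\tau_0$ at horizontal position $u\in[0,t_0]$ on the bottom of $C_1$ and crossing $\sigma_0$ at position $y\in[0,s_0]$ on the common boundary emerges on the top of $C_2$ at position $2y-u$: crossing two unit-height cylinders doubles the horizontal displacement, so the width-$s_0$ window at height one subtends, as the source sweeps over $\tau_0$, the full interval $[-t_0,\,2s_0]$ (mod $2d_{opt}$) at height two. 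By Lemma~\ref{WindowLem} the copy of $\tau_0$ on the top of $C_2$, namely $[\,2s_0+t_{start},\,2s_0+t_{start}+t_0\,]$, must avoid this entire region: disjointness at the left end gives $t_{start}\ge 0$ (so the lower bound is not purely definitional --- it is what justifies measuring $t_{start}$ from the basepoint $2s_0$), and disjointness at the wrap-around end gives $2s_0+t_{start}+t_0\le 2d_{opt}-t_0$, which is the asserted bound. One factor of $t_0$ is the width of $\tau_0$ itself; the other is the spread $[-t_0,0]$ of landing points of negatively sloped segments issuing from the right-hand portion of $\tau_0$ (see Figure~\ref{WindowLemmaCorFig}).

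By contrast, restricting to vertical trajectories can only force the two copies of $\tau_0$ to be disjoint as intervals on the circle of circumference $2d_{opt}$, which yields at best $t_{start}\le 2d_{opt}-2s_0-t_0$, one $t_0$ short of the claim, and no bookkeeping with the saddle connections adjacent to the simple zero will recover it. So the missing idea is precisely the ``window'' geometry --- similar triangles through a slit at height one, viewed at height two --- and without it the corollary as stated is not reached.
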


\begin{proof}
See Figure \ref{WindowLemmaCorFig} for a picture proof of this corollary.  Recall $\sigma_0$ (resp. $\tau_0$) is the saddle connection with length $s_0$ (resp. $t_0$).  Consider square $1$, which has $\tau_0$ on its bottom and $\sigma_0$ on its top.  Observe that there are trajectories from the bottom of square $1$ passing through $\sigma_0$ to the tops of all squares labeled $\{2d_{opt}, \ldots, 2d_{opt} + 2s_0\}$.  On the other hand, if we consider square $t_0$, which has the right endpoint of $\tau_0$ on its lower right, then there are trajectories from that square passing through $\sigma_0$ to all squares labeled $\{4d_{opt} - t_0 + 1, \ldots, 4d_{opt}\}$.  Therefore, the starting point of $\tau_0$ must lie somewhere after or including square $2d_{opt} + 2s_0$ and end on square at most $4d_{opt} - t_0$ by the Window Lemma.  Hence, $2d_{opt} - 2t_0 - 2s_0 \geq t_{start} \geq 0$ holds.

The fact that $t_{start}$ is even follows from the fact that the lower left corner of square $1$ descends to the origin and $t_{start}$ provides the location of the same zero.
\end{proof}

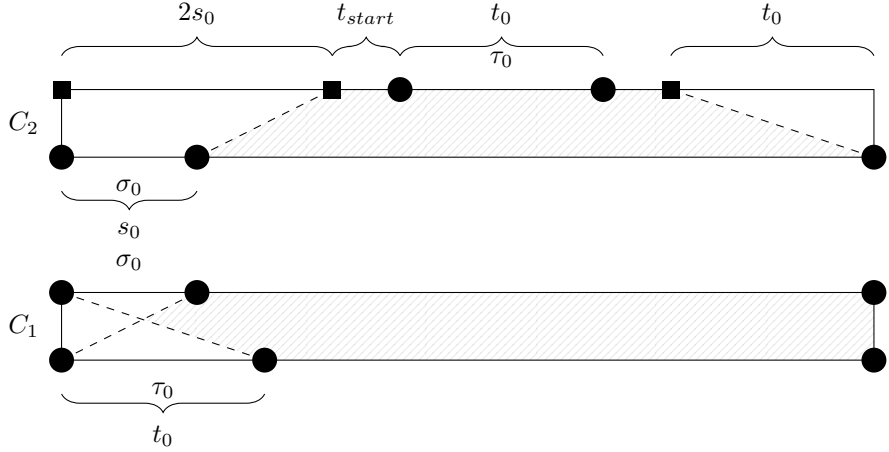
\begin{figure}[htb]
  \centering
  \begin{tikzpicture}[scale=.9]
    \draw (0,0) -- (12,0) -- (12,1) -- (0,1) -- (0,0);
    \draw (0,3) -- (12,3) -- (12,4) -- (0,4) -- (0,3);
    \draw [dashed] (0,0) -- (2,1);
    \draw [dashed] (2,3) -- (4,4);
    \draw [dashed] (3,0) -- (0,1);
    \draw [dashed] (12,3) -- (9,4);
    \foreach \x in {(0,0), (3,0), (0,1), (2,1), (12,0), (12,1)} {\node[circle, fill=black] at \x {};}
    \foreach \x in {(0,3), (2,3), (12,3), (5,4), (8,4)} {\node[circle, fill=black] at \x {};}
    \foreach \x in {(0,4), (4,4), (9,4)} {\node[rectangle, fill=black] at \x {};}
    \node[circle, label=above:$\tau_0$] at (13/2,4) {};
    \node[circle, label=below:$\tau_0$] at (3/2,0) {};
    \node[circle, label=above:$\sigma_0$] at (1,1) {};
    \node[circle, label=below:$\sigma_0$] at (1,3) {};
    \node[circle, label=left:$C_1$] at (0,1/2) {};
    \node[circle, label=left:$C_2$] at (0,3+1/2) {};
    \draw [decoration={brace,mirror,amplitude=7}, decorate] (0,-.5) --node[below=3mm]{$t_0$} (3,-.5);
    \draw [decoration={brace,mirror,amplitude=7}, decorate] (0,2.5) --node[below=3mm]{$s_0$} (2,2.5);
    \draw [decoration={brace,amplitude=7}, decorate] (0,4.5) --node[above=3mm]{$2s_0$} (4,4.5);
    \draw [decoration={brace,amplitude=7}, decorate] (4,4.5) --node[above=3mm]{$t_{start}$} (5,4.5);
    \draw [decoration={brace,amplitude=7}, decorate] (9,4.5) --node[above=3mm]{$t_0$} (12,4.5);
    \draw [decoration={brace,amplitude=7}, decorate] (5,4.5) --node[above=3mm]{$t_0$} (8,4.5);
    \fill [pattern=north east lines, pattern color=black, opacity=.3] (3,0) -- (6/5,3/5) -- (2,1) -- (12,1) -- (12,0);
    \fill [pattern=north east lines, pattern color=black, opacity=.3] (2,3) -- (4,4) -- (9,4) -- (12,3);
     \end{tikzpicture}
\caption{Proof of Corollary \ref{s0t0Inequality}: The shaded region on the top of $C_2$ represents the admissible locations of $\tau_0$}
\label{WindowLemmaCorFig}
\end{figure}

\begin{example}
\label{PartitionExample}
We go through an example that illustrates all of the restrictions on the partitions.  Consider the stratum $\cH(2,1^6)$ where $d_{opt} = 36$ and all of the extra branching occurs over a single zero following the convention above.  In this case, we consider $1$-cylinder diagrams in $\cH(2,1^4)$ and $\cH(1,1)$.  In the former case, the cylinder diagrams have $11$ saddle connections in their boundaries.  Therefore, we consider partitions
$$\sum_{i=0}^{10} t_i = 2(36) = 72.$$
Every partition of $72$ into $11$ numbers must have an element that is at least $7$.  However, we claim that the largest number in the partition cannot be $7$ in our context.  Since every partition under consideration must have exactly four odd numbers, observe that there are no partitions of $72$ with four sevens and every other number bounded above by six.  Therefore, $t_0 \geq 8$.

On the other hand, the unique $1$-cylinder diagram in $\cH(1,1)$ contains four saddle connections in its boundary.  Thus, we consider partitions of $72$ into four \emph{odd} integers.  Since $72/4 = 18$, the largest number in any admissible partition must be at least $19$, so $s_0 \geq 19$.

The largest value of $t_0$ that does not violate the inequality
$$2d_{opt} - 2t_0 - 2s_0  = 72 - 2t_0 - 38 \geq 0$$
is $17$.  Hence, $8 \leq t_0 \leq 17$.

For $s_0$, solve
$$2d_{opt} - 2t_0 - 2s_0  = 72 - 16 - 2s_0 \geq 0$$
to get that $s_0$ is at most $28$.  However, since $s_0$ is odd, we have $s_0 \leq 27$.  Hence, $s_0 \in \{19, 21, 23, 25, 27\}$.
\end{example}

\noindent \textbf{Step 2}: Create the partitions by finding the ranges of each of $s_0$ and $t_0$.  Then generate all partitions of $2d_{opt}$ into $n_1$ and $n_2$ numbers such that the first number varies over the admissible range of $s_0$ and $t_0$, respectively.  Finally, for each value of $s_0$ and $t_0$, those values bound the rest of the partition.  We also get to impose further restrictions.  Since every zero has an incoming and outgoing saddle connection, every odd number in the partition must come in a pair.  All of this is accomplished in the functions in the Sage notebook \verb|partition_functions.ipynb|.

\

Having extracted the necessary data about the location of the vertices in the cylinder diagrams and generated all possible partitions that could work with the established conventions, combine the partitions with the cylinder diagrams.  In this context, if we consider the saddle connections $\tau_i$, for example, then we call $\sigma_0$ the ``window'' and vice versa because trajectories starting at $\tau_i$ can ``see'' through $\sigma_0$ to other $\tau$ saddle connections.

\

\noindent \textbf{Step 3}: First, take each partition and reduce it modulo two to identify the odd numbers.  Denote the partition modulo two by $B_0$.  Then for each cylinder diagram consider each binary list corresponding to each simple zero as derived above.  Denote this by $B_1 = (b_0, \ldots, b_{n-1})$.  For each $B_0$, consider all cyclic permutations of $B_1$ and test if they are equal to $B_0$.  Each time we achieve an equality of the lists, record the pair of the partition with the cylinder diagram such that the bottom of the cylinder diagram has been cyclically permuted by an appropriate amount to achieve the equality.  This information is stored in files with the tag \verb|align_list| and the functions that do this are contained in the Sage notebook \verb|align_list_fcns.ipynb|.

\

\noindent \textbf{Step 4}: Observe that given a specific value for $s_0$ and $t_0$, Corollary \ref{s0t0Inequality} determines a full range of admissible values for $t_{start}$.  The next step takes maximum advantage of the Window Lemma.  Instead of simply checking that, say $\tau_0$, cannot see itself through window $\sigma_0$, we can fix a window length and check that none of the saddle connections in the bottom of cylinder $C_1$, in this case, can see themselves through the respective window.  The key is that this is actually extremely easy to check and it is encoded in the following two lines contained in the function \verb|visible_sc_check|.  (The variable \verb|s1| represents the quantity $s_0$.)

\begin{verbatim}
t_i_new_start
    = (t_i_orig_start[1] + t_i_sc_length_shift[i[0]])%width
if t_i_new_start < 2*s1 or t_i_new_start > width - 2*t_i_length:
    return False
\end{verbatim}

The starting point of $\tau_0$ depends of course on $s_0$ and $t_{start}$.  Shearing the surface and cutting and rearranging it so that saddle connection $\tau_i$ takes the place of $\tau_0$ we see that the shear was by $\sum_{j < i} t_j$.  Therefore, if the new starting point is less than $2s_0$ or larger than $2d_{opt} - 2t_i$, then it fails the Window Lemma and returns \verb|False|.  The output of partition $(t_0, \ldots, t_{n_1-1})$, with their cylinder diagrams, the value of $t_{start}$ and the window is stored in a file that includes the value of $t_0$ and the size of the Window in the filename: \verb|_align_list_| $t_0$  \verb|_visible_| Window, e.g. \verb|H_2_1_1_align_list_9_visible_8| corresponds to partitions for the seven saddle connections in the boundary of the $1$-cylinder diagrams in $\cH(2,1,1)$ starting with $9$ that cannot see through a window of length $8$.  This is repeated for the symmetric situation between the other boundaries of the cylinders.  All of these functions are contained in the Sage notebook \verb|align_list_visible_fcns.ipynb|.

\

\noindent \textbf{Step 5}: At this point we are ready to produce a full list of surfaces with complete coordinates as described in the previous section.  There is one extra piece of data that we get to take advantage of here.  In constructing the list in Step 4 above, each of the cylinder boundaries produces a $t_{start}$ value.  For the purpose of disambiguation we can refer to one as $t_{start}$ and the other as $s_{start}$.  However, this actually over-determines the surface.  By the conventions above, one of these values completely determines the other.  Therefore, when combining the lists we check to see that the value of $s_{start}$ implied by the value of $t_{start}$ is consistent with the value of $t_{start}$ that is implied by the value of $s_{start}$.  If not, the surface is not added to our final list.  It turns out that the compatibility of $t_{start}$ and $s_{start}$ is checked by an elementary arithmetic operation.

\begin{lemma}
\label{TstartSstartRelation}
$$2d_{opt} = 2s_0 + 2t_0 + s_{start} + t_{start}.$$
\end{lemma}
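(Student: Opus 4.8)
The plan is to read off $t_{start}$ and $s_{start}$ as the two gaps flanking the admissible interval that the Window Lemma (Lemma \ref{WindowLem}) produces for the position of the longest saddle connections, and then to observe that these two gaps, together with the doubled window offsets $2s_0$ and $2t_0$, partition the full circumference $2d_{opt}$. First I would recall the placement fixed by the conventions of this section and depicted in Figures \ref{SurfCoordSchematicFig} and \ref{WindowLemmaCorFig}: after cutting, shearing, and rearranging, $\tau_0$ (of length $t_0$) sits at $[0,t_0]$ on the bottom of $C_1$, the window $\sigma_0$ (of length $s_0$) sits at $[0,s_0]$ on the bottom of $C_2$, and the left endpoint of $\tau_0$ on the top of $C_2$ occupies surface position $2s_0 + t_{start}$. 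Corollary \ref{s0t0Inequality}, which is exactly the Window Lemma applied to trajectories passing through $\sigma_0$ from square $1$ (blocking the leftmost $2s_0$) and from the square carrying the far endpoint of $\tau_0$ (blocking the rightmost $t_0$, hence $2t_0$ for the full length), confines this left endpoint to the interval $[\,2s_0,\; 2d_{opt}-2t_0\,]$. By definition $t_{start}$ is precisely the distance of this endpoint from the left end $2s_0$ of that interval.

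Next I would run the identical construction on the other seam, interchanging the roles of the two cylinders and of $(\sigma,\tau,s,t)$; this is the ``symmetric situation between the other boundaries'' invoked in Step 4, which is what produces $s_{start}$. The same Window-Lemma argument, now read from the opposite side, identifies $s_{start}$ with the distance of the marked endpoint from the right end $2d_{opt}-2t_0$ of the admissible interval. Adding the two distances fills the interval of width $(2d_{opt}-2t_0)-2s_0$, so that $t_{start}+s_{start}=2d_{opt}-2s_0-2t_0$, which rearranges to the claimed identity $2d_{opt}=2s_0+2t_0+s_{start}+t_{start}$. As a consistency check, both gaps are then non-negative even integers, in agreement with Corollary \ref{s0t0Inequality} and its mirror image, and the Ornithorynque coordinates (where $s_0=2$, $t_0=1$, $t_{start}=0$, $2d_{opt}=6$) force $s_{start}=0$, exactly the tight case.

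The main obstacle is the bookkeeping in the second step. One must verify that the symmetric construction genuinely measures the \emph{complementary} gap rather than unrelated data, i.e. that interchanging $C_1$ and $C_2$ carries the left offset $2s_0$ to the right offset $2t_0$ and reverses orientation along the seam, so that the two independently computed quantities $t_{start}$ and $s_{start}$ label the two ends of one interval of circumference $2d_{opt}$. Concretely this means checking that the $\pi$-rotation relating the two drawings (which fixes the labels of the longest saddle connections $\sigma_0,\sigma_0$ and $\tau_0$ and preserves all lengths) respects the conventions that pin the origin on each boundary; once the offsets are matched the identity is an immediate partition of the circumference, but making that matching airtight, including the evenness and integrality inherited from Corollary \ref{s0t0Inequality}, is where the care is required.
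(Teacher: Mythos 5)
Your reading of $t_{start}$ is fine: by the definition of the surface coordinates the left endpoint of $\tau_0$ on the top of $C_2$ sits at position $2s_0+t_{start}$, and Corollary~\ref{s0t0Inequality} confines that position to $[\,2s_0,\;2d_{opt}-2t_0\,]$, so $t_{start}$ is the left-hand gap. The gap in your argument is the other half: the assertion that $s_{start}$ equals the distance to the right end of that same interval \emph{is} the content of the lemma, and you do not establish it --- you only flag it as ``where the care is required.'' The quantity $s_{start}$ is defined on the \emph{other} boundary, via the symmetric presentation in which $C_2$ is normalized so that $\sigma_0$ and $\tau_0$ both start at the origin of its bottom and top, and one then reads off where $\sigma_0$ lands on the top of $C_1$. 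Nothing in the Window Lemma identifies that number with a gap in the admissible interval on the top of $C_2$; indeed, the ``complementary gap'' reading of $s_{start}$ is a \emph{consequence} of the identity, not an independent route to it.

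Moreover, the mechanism you propose for closing the gap --- ``the $\pi$-rotation relating the two drawings'' --- is not the transformation that relates the two presentations. A $\pi$-rotation would place the $\tau$-boundary at the bottom of $C_2$ and reverse the left-to-right order of the saddle connections, i.e.\ it would reverse the cylinder diagrams and violate the conventions fixing $\sigma_0$ and $\tau_0$ at the origins of the bottoms. What actually relates the two presentations, and what the paper's proof invokes, is a rearrangement (cut-and-reglue) of the cylinders together with an integer unipotent shear. Since both cylinders have height one, a shear by $n$ displaces the top of each cylinder by $n$ relative to its bottom; the shear by $-(2s_0+t_{start})$ that moves $\tau_0$ on the top of $C_2$ to position $0$ therefore simultaneously moves $\sigma_0$ on the top of $C_1$ from $0$ to $2d_{opt}-2s_0-t_{start}$, which by the definition of the symmetric coordinates equals $2t_0+s_{start}$. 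That single bookkeeping step gives $2t_0+s_{start}=2d_{opt}-2s_0-t_{start}$, i.e.\ the claimed identity, with no appeal to the Window Lemma at all. Your numerical check on the Ornithorynque is consistent but does not substitute for this step.
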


\begin{proof}
This follows from writing down the two presentations of the surface.  Then transform one into the other by rearranging the cylinders and using a unipotent matrix to shear the requisite amount.
\end{proof}

Given the file notation, the construction of the list is actually very simple.  For partitions starting with $t_0$, the value of $s_0$ serves as the window, and for partitions starting with $s_0$, the value of $t_0$ serves as the window.  Therefore, we consider pairs of files with exactly these symmetric tags, check $t_{start}$ and $s_{start}$ as described above, and store all of the admissible surfaces in a file titled \verb|admissible_list|.  All of these functions are contained in the Sage notebook \verb|combine_align_list_visible_fcns.ipynb|.

\

At this point, several cases return the empty list and therefore do not exist.  In fact, the most computationally difficult case - the stratum $\cH(2,1^6)$ where all of the extra branching occurs over a single point - returns empty lists after Step 4.  However, for other cases, more filters need to be added.  We proceed by looking for cylinders of length $2d_{opt}$.  Specifically, we construct the vertical permutations.  Even in the case where the \verb|admissible_list| file is as large as possible, it is still possible to perform all of the calculations in the rest of the algorithm with a standard personal computer.

\

\noindent \textbf{Step 6}: Start at square $1$ and continue vertically until we return.  The length of this permutation must be exactly $2d_{opt}$.  Take each surface in \verb|admissible_list| that yields a surface where this one vertical permutation has exactly this length and store it in the file \verb|admissible_list_vert_perm_check|.  From this second list, test all vertical permutations to make sure that there are exactly two $2d_{opt}$-cycles and store all of the surfaces that satisfy this in the file \\ \verb|admissible_list_all_vert_perm_check|.  All of these functions are contained in the Sage notebook \verb|vert_perm_check_fcns.ipynb|.

With few exceptions \verb|admissible_list_all_vert_perm_check| is the empty list.

\

\noindent \textbf{Step 7}: Finally, take horizontal and vertical permutations $h$ and $v$, respectively.  Compute the permutation $h^k v$ corresponding to the monodromy elements of cylinders with slope $1/k$ for $1 \leq k \leq d_{opt}$.  The result must always be a pair of $2 d_{opt}$-cycles.  This function is contained in the Sage notebook \verb|slope_test_fcn.ipynb|.  We do not bother to store the final result in a file because this suffices to rule out every case in genus five, so that only the genus three case remains.  (Genus four is in the next section.)

\

\noindent \textbf{Step 8}: We use the isomorphism test for square-tiled surfaces in the Sage \verb|surface_dynamics| package to reduce the lists so that a single element remains in genus three.  The function to find the sum of Lyapunov exponents for a square-tiled surface is a rigorous function that applies the formula of \cite[$\S$ 10]{EskinKontsevichZorich2} (Theorem \ref{EKZArithLyapExpSum} above) to compute the sum.  It returns the answer that the sum is $1$ and proves that the Eierlegende Wollmilchsau generates the unique ST-curve with $d_{opt}$ even.


\subsection{The Algorithm for $d_{opt}$ Odd}

In the odd case, we follow the same algorithm as in the even case.  However, in the absence of simple zeros, some of the restrictions on the partitions are completely different here.  Regardless of whether they are weaker or stronger, the number of cases is sufficiently small that all calculations can be performed nearly instantly on any computer.  Therefore, we explain the restriction on the partitions and use the same functions as in the even case for all other steps of the algorithm.

In the case where there is a unique zero above every point, observe that saddle connections from a zero to itself must have even length, while saddle connections between distinct zeros must have odd length.  Therefore, we construct binary lists as above to capture this information.  In particular, in the case where there is a single double zero between two cylinders, all three saddle connections must have even length.

This holds for all cases except in the stratum $\cH(2^4)$ where the branching occurs over three of the $2$-torsion points.  In this case each of the three zeros in the $1$-cylinder diagrams in $\cH(2^3)$ could be the isolated double zero.  Therefore, we consider all three possibilities.  Though this is redundant, the computations are so fast that we opt for redundancy over potential exclusion.  With this modification, we continue to Step 3 above and proceed as before to see that there are no ST-curves in $\cH(2^4)$ and that the Ornithorynque generates the unique ST-curve in $\cH(2^3)$.

\begin{subappendices}
\subsection{A Technical Lemma}

In executing the computer search in the stratum $\cH(2, 1^6)$ in the case where all of the extra branching lies over a single point, an elementary lemma can eliminate the need to check nearly half of the cases.  In this case, every gain is especially valuable given the number of configurations to check.  For other cases, the assumption of the lemma is not satisfied in general and therefore the lemma is useless.

\begin{lemma}
Given a partition aligned with a cylinder diagram, $(\cP_1, \fc_1)$, and a window length $s_0$, if the function \verb|visible_align_check| returns \verb|False| for the input $(\cP_1, \fc_1)$, the window $s_0$, and all admissible values of $t_{start}$, then for all windows $s > s_0$, the function also returns \verb|False|.
\end{lemma}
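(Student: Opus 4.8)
The plan is to prove a monotonicity property of the visibility check: enlarging the window can only increase the set of saddle connections that trigger \verb|False|. Fix the cylinder data $(\cP_1, \fc_1)$ and, for each $i$, let $P_i$ denote the left endpoint position of $\tau_i$ on the top of $C_2$ after shearing so that $\tau_i$ takes the place of $\tau_0$; this is the quantity \verb|t_i_new_start| computed by \verb|visible_sc_check|. By that function, the check declares the configuration inadmissible exactly when $P_i < 2s_0$ or $P_i > 2d_{opt} - 2t_i$ for some $i$. The structural observation I would establish first is that, for fixed $t_{start}$, one has $P_i = 2s_0 + t_{start} + c_i$, where $c_i$ is the sum of the relative offset of $\tau_i$ along the top of $C_2$ and the shear $\sum_{j<i} t_j$, both of which are determined by $(\cP_1, \fc_1)$ alone and are independent of the window length. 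In particular $P_0 = 2s_0 + t_{start}$ recovers the definition of $t_{start}$, and replacing the window $s_0$ by a larger window $s$ while holding $t_{start}$ fixed shifts every $P_i$ by exactly $2(s - s_0)$, the same amount by which the lower threshold $2s_0$ increases.

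With this in hand, the core of the argument is an elementary comparison of the two thresholds. Under the simultaneous shift of $P_i$ and of the lower threshold by $2(s-s_0)$, the lower-bound test $P_i \geq 2s$ is unchanged, whereas the upper-bound test $P_i \leq 2d_{opt} - 2t_i$ becomes strictly more restrictive, since its left-hand side grows while its right-hand side is fixed. Hence, for a fixed $t_{start}$, if some $\tau_i$ triggers \verb|False| for the window $s_0$ --- by falling below $2s_0$ or by exceeding $2d_{opt} - 2t_i$ --- then the same $\tau_i$ still triggers \verb|False| for the larger window $s$. Concretely, if $P_i < 2s_0$ then $P_i + 2(s-s_0) < 2s_0 + 2(s - s_0) = 2s$, and if $P_i > 2d_{opt} - 2t_i$ then a fortiori $P_i + 2(s-s_0) > 2d_{opt} - 2t_i$.

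To conclude, I would invoke Corollary \ref{s0t0Inequality}, which gives the admissible range of $t_{start}$ for window $s$ as the interval $[0,\, 2d_{opt} - 2t_0 - 2s]$, contained in the corresponding interval $[0,\, 2d_{opt} - 2t_0 - 2s_0]$ for window $s_0$ (possibly empty, in which case the conclusion holds vacuously). Thus every $t_{start}$ admissible for $s$ is also admissible for $s_0$; by hypothesis \verb|visible_align_check| returns \verb|False| there, so some $\tau_i$ fails the check for the window $s_0$, and by the preceding paragraph that same $\tau_i$ fails for the window $s$. Therefore the function returns \verb|False| for $(\cP_1, \fc_1)$, the window $s$, and every admissible $t_{start}$, which is the claim.

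The step I expect to be the main obstacle is the structural identity $P_i = 2s_0 + t_{start} + c_i$ with a window-independent $c_i$ --- that is, confirming that lengthening $\sigma_0$ enters the tested position only through the additive $2s_0$ term and does not interact with the shear used to reposition $\tau_i$. This is plausible because the top of $C_2$, where the $\tau_i$ live, is subdivided by $\cP_1$ regardless of the length of $\sigma_0$, so the relative offsets and the shear $\sum_{j<i}t_j$ are functions of $(\cP_1,\fc_1)$ only. Once this is pinned down from the definitions of $t_{start}$ and of the shear, the remaining threshold comparison and range containment are routine.
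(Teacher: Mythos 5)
Your proof is correct and follows essentially the same route as the paper's: both arguments reduce to observing that each of the two failure inequalities in \verb|visible_sc_check| is monotone under enlarging the window (the lower threshold $2s_0$ grows while \verb|t_i_length| and hence the upper threshold is window-independent), combined with the containment of the admissible $t_{start}$ range from Corollary \ref{s0t0Inequality}. You are in fact slightly more careful than the paper, whose proof implicitly treats \verb|t_i_new_start| as fixed when the window grows; your identity $P_i = 2s_0 + t_{start} + c_i$ makes the dependence on the window explicit and verifies that the monotonicity of both tests survives it.
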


\begin{proof}
The variable \verb|s1| is the value of the window length $s_0$.  The value of $t_{start}$ can take on all values that it can for all other windows.  In the function \verb|visible_sc_check|, the test for failure is given by
$$\verb|t_i_new_start| <2\verb|s1|$$
or
$$\verb|t_i_new_start| > 2d_{opt} - 2\verb|t_i_length|.$$
Therefore, if the test fails because $\verb|t_i_new_start| < 2\verb|s1| = 2s_0$, then certainly it fails for all larger windows, i.e. $s > s_0$.  On the other hand, observe that \verb|t_i_length| is independent of the window.  Therefore, if the test fails for the second condition, then it does so regardless of the window.
\end{proof}

In the stratum $\cH(2, 1^6)$ in the case where all of the extra branching lies over a single point, the function \verb|visible_align_check| takes the place of \verb|visible_sc_check|.  In that case, there must be a window with size at least $19$ (see Example \ref{PartitionExample}).  After running the entire search, no surfaces were found that work with a window of size $19$.  Therefore, no larger window can work.
\end{subappendices}

\section{Classification of Affine Submanifolds with Completely Degenerate KZ-Spectrum}
\label{CDKZSpecSect}

One of the most difficult obstacles to overcome in the work of the first named author \cite{AulicinoCompDegKZ, AulicinoCompDegKZAIS} was the possible existence of Shimura-Teichm\"uller curves in genus five.  Complicated degeneration arguments were needed in both works to exclude the existence of higher dimensional orbit closures with completely degenerate KZ-spectrum.  In fact, one of the main results of \cite{AulicinoCompDegKZ} was a theorem excluding the existence of orbits with completely degenerate KZ-spectrum in genus five and six that was conditional on the conjecture that ST-curves do not exist in genus five, which is now proven in Theorem \ref{MainThm} above.  Following the work of \cite{EskinMirzakhaniMohammadiOrbitClosures} far stronger theorems could be proven.  This was the content of \cite{AulicinoCompDegKZAIS}, which proved that there are no higher dimensional orbit closures with completely degenerate KZ-spectrum.  From this, one concludes that there must be only finitely many ST-curves in genus five because otherwise, they would accumulate to an affine submanifold with higher dimension.

In this section, we give a guide for how to extract the necessary results from \cite{AulicinoCompDegKZAIS} to obtain a classification of all affine submanifolds with completely degenerate KZ-spectrum using the full classification of ST-curves established here.

\begin{proof}[Proof of Thm. \ref{FullCDKZClassThm}]
Every $\splin$-orbit closure is an immersed affine submanifold $\cM$ by \cite{EskinMirzakhaniMohammadiOrbitClosures}.  If $\cM$ has completely degenerate KZ-spectrum, then it has a rank $g-1$ Forni subspace, i.e. the monodromy of the KZ-cocycle admits a rank $g-1$ compact factor by \cite[Cor. 5.3]{ForniDev} (summarized in \cite[Lem. 2.1]{AulicinoCompDegKZAIS}).  The affine submanifold $\cM$ has (cylinder) rank one by \cite[Cor. 4.2]{AulicinoCompDegKZAIS}.  It is arithmetic by \cite[Lem. 3.1]{AulicinoCompDegKZAIS}.  Therefore, $\cM$ is either an arithmetic Teichm\"uller curve with completely degenerate KZ-spectrum or it contains infinitely many Teichm\"uller curves with completely degenerate KZ-spectrum by \cite[Cor. 3.2]{AulicinoCompDegKZAIS}.  Since being a Teichm\"uller curve with completely degenerate KZ-spectrum is equivalent to being an ST-curve by \cite[Prop. 6.4]{MollerShimuraTeich} (Proposition \ref{EqToCDKZSpec} above) and there are only two such curves in any genus by Theorem \ref{FullSTClassThm}, the theorem follows.
\end{proof}


\bibliography{fullbibliotex}{}

\providecommand{\bysame}{\leavevmode\hbox to3em{\hrulefill}\thinspace}
\providecommand{\MR}{\relax\ifhmode\unskip\space\fi MR }
\providecommand{\MRhref}[2]{%
  \href{http://www.ams.org/mathscinet-getitem?mr=#1}{#2}
}
\providecommand{\href}[2]{#2}
\begin{thebibliography}{EMM15}

\bibitem[Aul15]{AulicinoCompDegKZ}
David Aulicino, \emph{Teichm\"uller discs with completely degenerate
  {K}ontsevich-{Z}orich spectrum}, Comment. Math. Helv. \textbf{90} (2015),
  no.~3, 573--643. \MR{3420464}

\bibitem[Aul18]{AulicinoCompDegKZAIS}
\bysame, \emph{Affine invariant submanifolds with completely degenerate
  {K}ontsevich--{Z}orich spectrum}, Ergodic Theory Dynam. Systems \textbf{38}
  (2018), no.~1, 10--33. \MR{3742536}

\bibitem[AV07]{AvilaVianaSimp}
Artur Avila and Marcelo Viana, \emph{Simplicity of {L}yapunov spectra: proof of
  the {Z}orich-{K}ontsevich conjecture}, Acta Math. \textbf{198} (2007), no.~1,
  1--56. \MR{2316268 (2008m:37010)}

\bibitem[EKZ14]{EskinKontsevichZorich2}
Alex Eskin, Maxim Kontsevich, and Anton Zorich, \emph{Sum of {L}yapunov
  exponents of the {H}odge bundle with respect to the {T}eichm\"uller geodesic
  flow}, Publ. Math. Inst. Hautes \'Etudes Sci. \textbf{120} (2014), 207--333.
  \MR{3270590}

\bibitem[EMM15]{EskinMirzakhaniMohammadiOrbitClosures}
Alex Eskin, Maryam Mirzakhani, and Amir Mohammadi, \emph{Isolation,
  equidistribution, and orbit closures for the {${\rm SL}(2,\mathbb{R})$}
  action on moduli space}, Ann. of Math. (2) \textbf{182} (2015), no.~2,
  673--721. \MR{3418528}

\bibitem[Fay73]{FayThetaFcns}
John~D. Fay, \emph{Theta functions on {R}iemann surfaces}, Lecture Notes in
  Mathematics, Vol. 352, Springer-Verlag, Berlin-New York, 1973. \MR{0335789
  (49 \#569)}

\bibitem[FM08]{ForniMatheus}
Giovanni Forni and Carlos Matheus, \emph{An example of a {T}eichm\"uller disk
  in genus 4 with degenerate {K}ontsevich-{Z}orich spectrum}, Preprint
  \textbf{http://arxiv.org/abs/0810.0023v1} (2008), 1--8.

\bibitem[FMZ11]{ForniMatheusZorichSqTiled}
Giovanni Forni, Carlos Matheus, and Anton Zorich, \emph{Square-tiled cyclic
  covers}, J. Mod. Dyn. \textbf{5} (2011), no.~2, 285--318. \MR{2820563
  (2012f:37071)}

\bibitem[For02]{ForniDev}
Giovanni Forni, \emph{Deviation of ergodic averages for area-preserving flows
  on surfaces of higher genus}, Ann. of Math. (2) \textbf{155} (2002), no.~1,
  1--103. \MR{MR1888794 (2003g:37009)}

\bibitem[For06]{ForniHand}
\bysame, \emph{On the {L}yapunov exponents of the {K}ontsevich-{Z}orich
  cocycle}, Handbook of dynamical systems. {V}ol. 1{B}, Elsevier B. V.,
  Amsterdam, 2006, pp.~549--580. \MR{MR2186248}

\bibitem[GKN17]{GrushevskyKricheverNortonRealNormDiffs}
Samuel Grushevsky, Igor Krichever, and Chaya Norton, \emph{Real-normalized
  differentials: Limits on stable curves}, Preprint \textbf{arXiv:1703.07806}
  (2017), 1--67.

\bibitem[HN18]{HuNortonGenVarFormsAbDiffs}
Xuntao Hu and Chaya Norton, \emph{General variational formulas for abelian
  differentials}, International Mathematics Research Notices (2018), rny106.

\bibitem[HS08]{HerrlichSchmithusenEier}
Frank Herrlich and Gabriela Schmith{\"u}sen, \emph{An extraordinary origami
  curve}, Math. Nachr. \textbf{281} (2008), no.~2, 219--237. \MR{2387362
  (2008k:14059)}

\bibitem[Mas76]{MasurExt}
Howard Masur, \emph{Extension of the {W}eil-{P}etersson metric to the boundary
  of {T}eichmuller space}, Duke Math. J. \textbf{43} (1976), no.~3, 623--635.
  \MR{MR0417456 (54 \#5506)}

\bibitem[Mas86]{MasurClosedTraj}
\bysame, \emph{Closed trajectories for quadratic differentials with an
  application to billiards}, Duke Math. J. \textbf{53} (1986), no.~2, 307--314.
  \MR{MR850537 (87j:30107)}

\bibitem[M{\"o}l11]{MollerShimuraTeich}
Martin M{\"o}ller, \emph{Shimura and {T}eichm\"uller curves}, J. Mod. Dyn.
  \textbf{5} (2011), no.~1, 1--32. \MR{2787595}

\bibitem[MY10]{MatheusYoccoz}
Carlos Matheus and Jean-Christophe Yoccoz, \emph{The action of the affine
  diffeomorphisms on the relative homology group of certain exceptionally
  symmetric origamis}, J. Mod. Dyn. \textbf{4} (2010), no.~3, 453--486.
  \MR{2729331}

\bibitem[Rod88]{RodinRiemBdProbRiemSurfs}
Yu.~L. Rodin, \emph{The {R}iemann boundary problem on {R}iemann surfaces},
  Mathematics and its Applications (Soviet Series), vol.~16, D. Reidel
  Publishing Co., Dordrecht, 1988. \MR{937707}

\bibitem[SW10]{SmillieWeissCharLattice}
John Smillie and Barak Weiss, \emph{Characterizations of lattice surfaces},
  Invent. Math. \textbf{180} (2010), no.~3, 535--557. \MR{2609249}

\bibitem[Vee86]{VeechGt}
William~A. Veech, \emph{The {T}eichm\"uller geodesic flow}, Ann. of Math. (2)
  \textbf{124} (1986), no.~3, 441--530. \MR{866707 (88g:58153)}

\bibitem[Vee89]{VeechTeichCurvEisen}
W.~A. Veech, \emph{Teichm\"uller curves in moduli space, {E}isenstein series
  and an application to triangular billiards}, Invent. Math. \textbf{97}
  (1989), no.~3, 553--583. \MR{1005006 (91h:58083a)}

\bibitem[Vor96]{VorobetsPlaneStrctsBilliards}
Ya.~B. Vorobets, \emph{Plane structures and billiards in rational polygons: the
  {V}eech alternative}, Uspekhi Mat. Nauk \textbf{51} (1996), no.~5(311),
  3--42. \MR{1436653 (97j:58092)}

\bibitem[Yam80]{Yamada}
Akira Yamada, \emph{Precise variational formulas for abelian differentials},
  Kodai Math. J. \textbf{3} (1980), no.~1, 114--143. \MR{MR569537 (81i:30078)}

\end{thebibliography}

\end{document}